\numberwithin{equation}{section}
\numberwithin{figure}{section}
\theoremstyle{plain}
\newtheorem{thm}{\protect\theoremname}[section]
  \theoremstyle{plain}
  \newtheorem{lem}[thm]{\protect\lemmaname}
  \theoremstyle{definition}
  \newtheorem{defn}[thm]{\protect\definitionname}
  \theoremstyle{plain}
  \newtheorem{cor}[thm]{\protect\corollaryname}
  \theoremstyle{plain}
  \newtheorem{prop}[thm]{\protect\propositionname}
  \theoremstyle{remark}
  \newtheorem*{claim*}{\protect\claimname}
  \theoremstyle{remark}
  \newtheorem{rem}[thm]{\protect\remarkname}
\subjclass[2010]{14E16, 11G25 11S15, 11S80}
  \providecommand{\claimname}{Claim}
  \providecommand{\corollaryname}{Corollary}
  \providecommand{\definitionname}{Definition}
  \providecommand{\lemmaname}{Lemma}
  \providecommand{\propositionname}{Proposition}
  \providecommand{\remarkname}{Remark}
\providecommand{\theoremname}{Theorem}
\begin{document}

\title{The wild McKay correspondence and $p$-adic measures}

\author{Takehiko Yasuda}
\begin{abstract}
We prove a version of the wild McKay correspondence by using $p$-adic
measures. This result provides new proofs of mass formulas for extensions
of a local field by Serre, Bhargava and Kedlaya.
\end{abstract}

\address{Department of Mathematics, Graduate School of Science, Osaka University,
Toyonaka, Osaka 560-0043, Japan, tel:+81-6-6850-5326, fax:+81-6-6850-5327}

\email{takehikoyasuda@math.sci.osaka-u.ac.jp}

\keywords{the McKay correspondence, $p$-adic measures, wild quotient singularities,
stringy invariants, mass formulas}

\maketitle
\global\long\def\AA{\mathbb{A}}
\global\long\def\PP{\mathbb{P}}
\global\long\def\NN{\mathbb{N}}
\global\long\def\GG{\mathbb{G}}
\global\long\def\ZZ{\mathbb{Z}}
\global\long\def\QQ{\mathbb{Q}}
\global\long\def\CC{\mathbb{C}}
\global\long\def\FF{\mathbb{F}}
\global\long\def\LL{\mathbb{L}}
\global\long\def\RR{\mathbb{R}}
\global\long\def\MM{\mathbb{M}}
\global\long\def\SS{\mathbb{S}}

\global\long\def\bx{\mathbf{x}}
\global\long\def\bf{\mathbf{f}}
\global\long\def\ba{\mathbf{a}}
\global\long\def\bs{\mathbf{s}}
\global\long\def\bt{\mathbf{t}}
\global\long\def\bw{\mathbf{w}}
\global\long\def\bb{\mathbf{b}}
\global\long\def\bv{\mathbf{v}}
\global\long\def\bp{\mathbf{p}}
\global\long\def\bm{\mathbf{m}}
\global\long\def\bj{\mathbf{j}}
\global\long\def\bM{\mathbf{M}}
\global\long\def\bz{\boldsymbol{z}}

\global\long\def\cN{\mathcal{N}}
\global\long\def\cW{\mathcal{W}}
\global\long\def\cY{\mathcal{Y}}
\global\long\def\cM{\mathcal{M}}
\global\long\def\cF{\mathcal{F}}
\global\long\def\cX{\mathcal{X}}
\global\long\def\cE{\mathcal{E}}
\global\long\def\cJ{\mathcal{J}}
\global\long\def\cO{\mathcal{O}}
\global\long\def\cD{\mathcal{D}}
\global\long\def\cZ{\mathcal{Z}}
\global\long\def\cR{\mathcal{R}}
\global\long\def\cC{\mathcal{C}}
\global\long\def\cU{\mathcal{U}}
\global\long\def\cI{\mathcal{I}}

\global\long\def\fs{\mathfrak{s}}
\global\long\def\fp{\mathfrak{p}}
\global\long\def\fm{\mathfrak{m}}
\global\long\def\fX{\mathfrak{X}}
\global\long\def\fV{\mathfrak{V}}
\global\long\def\fx{\mathfrak{x}}
\global\long\def\fv{\mathfrak{v}}
\global\long\def\fY{\mathfrak{Y}}

\global\long\def\rv{\mathbf{\mathrm{v}}}
\global\long\def\rx{\mathrm{x}}
\global\long\def\rw{\mathrm{w}}
\global\long\def\ry{\mathrm{y}}
\global\long\def\rz{\mathrm{z}}
\global\long\def\bv{\mathbf{v}}
\global\long\def\bx{\boldsymbol{x}}
\global\long\def\by{\boldsymbol{y}}
\global\long\def\sv{\mathsf{v}}
\global\long\def\sx{\mathsf{x}}
\global\long\def\sw{\mathsf{w}}
\global\long\def\bomega{\boldsymbol{\omega}}
\global\long\def\bphi{\boldsymbol{\phi}}
\global\long\def\bpsi{\boldsymbol{\psi}}

\global\long\def\Spec{\mathrm{Spec}\,}
\global\long\def\Hom{\mathrm{Hom}}
\global\long\def\Spf{\mathrm{\mathrm{Spf}\,}}

\global\long\def\Var{\mathrm{Var}}
\global\long\def\Gal{\mathrm{Gal}}
\global\long\def\Jac{\mathrm{Jac}}
\global\long\def\Ker{\mathrm{Ker}}
\global\long\def\Im{\mathrm{Im}}
\global\long\def\Aut{\mathrm{Aut}}
\global\long\def\st{\mathrm{st}}
\global\long\def\diag{\mathrm{diag}}
\global\long\def\characteristic{\mathrm{char}}
\global\long\def\tors{\mathrm{tors}}
\global\long\def\sing{\mathrm{sing}}
\global\long\def\red{\mathrm{red}}
\global\long\def\Ind{\mathrm{Ind}}
\global\long\def\nr{\mathrm{nr}}
\global\long\def\ord{\mathrm{ord}}
\global\long\def\pt{\mathrm{pt}}
\global\long\def\op{\mathrm{op}}
 \global\long\def\univ{\mathrm{univ}}
\global\long\def\length{\mathrm{length}}
\global\long\def\sm{\mathrm{sm}}
\global\long\def\top{\mathrm{top}}
\global\long\def\rank{\mathrm{rank}}
\global\long\def\Mot{\mathrm{Mot}}
\global\long\def\age{\mathrm{age}\,}
\global\long\def\et{\mathrm{et}}
\global\long\def\hom{\mathrm{hom}}
\global\long\def\tor{\mathrm{tor}}
\global\long\def\reg{\mathrm{reg}}
\global\long\def\an{\mathrm{an}}
\global\long\def\nor{\mathrm{nor}}

\global\long\def\Conj#1{\mathrm{Conj}(#1)}
\global\long\def\Mass#1{\mathrm{Mass}(#1)}
\global\long\def\Inn#1{\mathrm{Inn}(#1)}
\global\long\def\bConj#1{\mathbf{Conj}(#1)}
\global\long\def\Hilb{\mathrm{Hilb}}
\global\long\def\sep{\mathrm{sep}}
\global\long\def\GL#1#2{\mathrm{GL}_{#1}(#2)}
\global\long\def\codim{\mathrm{codim}}
\global\long\def\stw{\mathrm{stw}}

\global\long\def\GEt{G\text{-}\mathrm{\acute{E}t}(K)}
\global\long\def\GCov{G\text{-}\mathrm{Cov}}
\global\long\def\preuntwisting{\left\langle F\right\rangle }
\global\long\def\Tr{\mathrm{Tr}}
\global\long\def\Nr{\mathrm{Nr}}
\global\long\def\Eta{\mathrm{Eta}}
\global\long\def\Fie{\mathrm{Fie}}
\global\long\def\Bl{\mathrm{Bl}}
\global\long\def\SnEt{S_{n}\text{-}\mathrm{\acute{E}t}(K)}
\global\long\def\nEt{n\text{-}\mathrm{\acute{E}t}(K)}

\tableofcontents{}

\section{Introduction\label{sec:Introduction}}

The aim of this paper is to prove a version of the wild McKay correspondence,
the McKay correspondence in positive or mixed characteristic where
a given finite group may have order dividing the characteristic of
the base field or the residue field. Our main tool is the $p$-adic
measure.

By the McKay correspondence, we mean an equality between a certain
invariant of a $G$-variety $V$ with $G$ a finite group and a similar
invariant of the quotient variety $V/G$ or a desingularization of
it. There are different versions for different invariants. Our concern
is the one using motivic invariants or their realizations. In characteristic
zero, such a version was studied by Batyrev \cite{MR1677693} and
Denef-Loeser \cite{MR1905024}. Recently, after examining a special
case in \cite{MR3230848}, the author started to try to generalize
it to positive or mixed characteristic, and formulated a conjecture
in \cite{Yasuda:2013fk} for linear actions on affine spaces over
a complete discrete valuation ring with algebraically closed residue
field. Later, variants and generalizations were formulated in \cite{Wood-Yasuda-I,Yasuda:2014fk2}.
In \cite{Wood-Yasuda-I}, the situation was considered where the residue
field is only perfect. Moreover, when the residue field is finite,
the point-counting realization was discussed. In \cite{Yasuda:2014fk2},
non-linear actions on affine normal varieties were treated. In the
present paper, we consider non-linear actions on normal quasi-projective
varieties over a complete discrete valuation ring with finite residue
field and prove a version of the wild McKay correspondence at the
level of point-counting realization, with a little dissatisfaction
at the formulation in the non-affine case.

Let $\cO_{K}$ be a complete discrete valuation ring, $K$ its fraction
field and $k$ its residue field, which is supposed to be finite.
For the pair $(X,D)$ of an $\cO_{K}$-variety $X$ and a $\QQ$-divisor
$D$ on $X$ such that $K_{X}+D$ is $\QQ$-Cartier with $K_{X}$
the canonical divisor of $X$ over $\cO_{K}$, we define the \emph{stringy
point count }of $(X,D)$, 
\[
\sharp_{\st}(X,D)\in\RR_{\ge0}\cup\{\infty\},
\]
as the volume of $X(\cO_{K})$ with respect to a certain $p$-adic
measure. When $D=0$, identifying the pair $(X,0)$ with the variety
$X$ itself, we write $\sharp_{\st}(X,0)=\sharp_{\st}X$. Roughly,
the stringy point count is the point-count realization of the motivic
counterpart of the stringy $E$-function introduced by Batyrev \cite{MR1672108,MR1677693}.
Its principal properties are as follows.
\begin{itemize}
\item When $X$ is $\cO_{K}$-smooth, we have $\sharp_{\st}X=\sharp X(k)$. 
\item There exists a decomposition into contributions of $k$-points,
\[
\sharp_{\st}(X,D)=\sum_{x\in X(k)}\sharp_{\st}(X,D)_{x}.
\]

\item If $f:Y\to X$ is a proper birational morphism of normal $\cO_{K}$-varieties
which induces a crepant map $(Y,E)\to(X,D)$ of pairs, then we have
\[
\sharp_{\st}(X,D)=\sharp_{\st}(Y,E).
\]

\end{itemize}
We generalize the invariant to pairs having a finite group action.
Let $(V,E)$ be a pair as above and suppose that a finite group $G$
faithfully acts on $V$ and the divisor $E$ is stable under the action.
Let $M$ be a $G$-étale $K$-algebra, that is, $\Spec M\to\Spec K$
is an étale $G$-torsor and let $\cO_{M}$ be its integer ring. We
define the $M$-\emph{stringy point count }of $(V,E)$, denoted by
$\sharp_{\st}^{M}(V,E)$, as a certain $p$-adic volume of the set
of $G$-equivariant $\cO_{K}$-morphisms $\Spec\cO_{M}\to V$, and
define the $G$\emph{-stringy point count }by\emph{
\[
\sharp_{\st}^{G}(V,E)=\sum_{M}\sharp_{\st}^{M}(V,E),
\]
}where $M$ runs over the isomorphism classes of $G$-étale $K$-algebras.
Thus $\sharp_{\st}^{G}(V,E)$ is the weighted count of $G$-étale
$K$-algebras $M$ with weights $\sharp_{\st}^{M}(V,E)$. Basic properties
of the $G$-stringy point count are as follows.
\begin{itemize}
\item When $G=1$, we have $\sharp_{\st}^{G}(V,E)=\sharp_{\st}(V,E)$. 
\item When $V=\AA_{\cO_{K}}^{n}$, $E=0$ and the $G$-action is linear,
then 
\[
\sharp_{\st}^{M}V=\frac{q^{n-\bv_{V}(M)}}{\sharp\Aut^{G}(M/K)},
\]
where $\bv_{V}$ is a function associated to the $G$-action on $V$
and $\Aut^{G}(M/K)$ is the group of $G$-equivariant $K$-automorphisms
of $M$. 
\end{itemize}
The invariant $\sharp_{\st}^{G}(V,E)$ is roughly the point-counting
realization of a motivic invariant studied in \cite{Yasuda:2014fk2},
which is a simultaneous refinement and generalization of the orbifold
$E$-function and the stringy $E$-function considered by Batyrev
\cite{MR1677693}. 

If we put $X:=V/G$, there exists a unique $\QQ$-divisor $D$ on
$X$ such that the natural morphism $(V,E)\to(X,D)$ of pairs is crepant.
Our main result is as follows.
\begin{thm}
We have
\[
\sharp_{\st}(X,D)=\sharp_{\st}^{G}(V,E).
\]

\end{thm}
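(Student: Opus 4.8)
The plan is to realize both sides as $p$-adic integrals over a common space and then to match their integrands. Writing $\pi\colon V\to X$ for the quotient morphism, I would first reduce the global equality to a local one. Using the decomposition $\sharp_{\st}(X,D)=\sum_{x\in X(k)}\sharp_{\st}(X,D)_{x}$ together with the parallel decomposition of $\sharp_{\st}^{G}(V,E)$ obtained by sorting each $G$-equivariant morphism according to the reduction in $X(k)$ of the $\cO_{K}$-point it induces, it suffices to prove equality of the contributions at each closed point $x\in X(k)$. The crepant birational invariance of $\sharp_{\st}$ then allows me to replace $(X,D)$, étale-locally around $x$, by a convenient model, so that one may assume a good local structure of the quotient.

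The heart of the argument is a twisted change of variables. The morphism $\pi$ is a $G$-torsor over the open locus $U\subseteq X$ where $G$ acts freely, and the complement of $\pi^{-1}(U)$ has measure zero, so I may integrate over $U$. For an $\cO_{K}$-point $x\colon\Spec\cO_{K}\to X$ whose generic point lands in $U$, the generic fiber $\Spec K\times_{X}V$ is a $G$-torsor over $\Spec K$, hence of the form $\Spec M$ for a $G$-étale $K$-algebra $M$; taking integral closure yields $\Spec\cO_{M}$, and the integral point $x$ corresponds to a $G$-equivariant $\cO_{K}$-morphism $\Spec\cO_{M}\to V$, a twisted lift along this torsor. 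This sets up an almost-everywhere bijection
\[
X(\cO_{K})\ \longleftrightarrow\ \coprod_{M}\bigl\{\,G\text{-equivariant }\cO_{K}\text{-morphisms }\Spec\cO_{M}\to V\,\bigr\}\big/\Aut^{G}(M/K),
\]
where the quotient by $\Aut^{G}(M/K)$ reflects that a single point of $X(\cO_{K})$ determines the lift only up to automorphisms of the torsor commuting with $G$. This is precisely the origin of the factor $1/\sharp\Aut^{G}(M/K)$ appearing in $\sharp_{\st}^{M}(V,E)$.

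It then remains to compare the measures across this correspondence on each stratum of fixed type $M$. Pulling the stringy density of $(X,D)$ back along $\pi$ introduces the discrepancy factor recorded by the crepancy identity $K_{V}+E=\pi^{*}(K_{X}+D)$, while the change from $\cO_{K}$-valued to $\cO_{M}$-valued integration introduces the different $\mathfrak{d}_{\cO_{M}/\cO_{K}}$ of the extension. I would verify that these two contributions combine exactly into the density defining $\sharp_{\st}^{M}(V,E)$; in the linear model this is the computation producing the weight $q^{n-\bv_{V}(M)}$, where $\bv_{V}(M)$ simultaneously encodes the age-type shift coming from the crepancy and the valuation of the different. Summing over the isomorphism classes of $M$ and integrating then gives $\sharp_{\st}(X,D)=\sum_{M}\sharp_{\st}^{M}(V,E)=\sharp_{\st}^{G}(V,E)$.

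The main obstacle will be this measure comparison in the wildly ramified case: when $\sharp G$ is divisible by the residue characteristic, the tame change-of-variables formula fails, and one must follow the different of $\cO_{M}/\cO_{K}$ through the ramification filtration and confirm that its valuation matches the discrepancy dictated by $E$ and $D$. A secondary difficulty is the non-affine case, where the local morphisms $\Spec\cO_{M}\to V$ must be glued compatibly over an affine cover of $X$ and the twisting data kept consistent so that the displayed bijection is globally well defined; this is exactly where the quasi-projectivity hypothesis enters and where, as the author notes, the formulation remains somewhat unsatisfactory.
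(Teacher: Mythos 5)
Your overall architecture---an almost-everywhere point correspondence $X(\cO_{K})\leftrightarrow\coprod_{M}V(\cO_{M})^{G}/\Aut^{G}(M/K)$ followed by a measure comparison in which the crepancy discrepancy and the different of $\cO_{M}/\cO_{K}$ combine into the weight---does match the skeleton of the paper: the correspondence is Proposition \ref{prop:general point corr}, and the combination of the two contributions is exactly what appears in the identity $E^{\left\langle M\right\rangle}=-(\sharp H\cdot\bv_{V}(M)+\delta_{L/K})V_{m}^{\left\langle M\right\rangle}$ in section \ref{sub:linear}. But there is a genuine gap at precisely the step you defer. You never say what measure the set of $G$-equivariant points $\Spec\cO_{M}\to V$ carries, nor how one would integrate on it: this set is not, on its face, the set of $\cO_{K}$-points of any $K$-analytic manifold, and in the wild case there is no change-of-variables formula that applies directly to $\pi$. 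The paper's resolution is the \emph{untwisting} construction, its self-described technical core: embed $V$ $G$-equivariantly into a linearly-acted $\AA_{\cO_{K},\bx}^{n}$, use the tuning module $\Xi_{M}$ to build a variety $V^{|M|,\nu}$ whose \emph{ordinary} $\cO_{K}$-points are canonically ($C_{G}(H)$-equivariantly) identified with $V(\cO_{M})^{G}$ (Proposition \ref{prop:bij2}), endow it with the log structure $E^{|M|,\nu}$ obtained by crepant pullback from $(X,D)$, and then apply the ordinary change-of-variables statement (Theorem \ref{thm:change-vars}) to the generically étale morphism $V^{|M|,\nu}\to X$; this produces the factor $1/\sharp C_{G}(H)$ and is completely insensitive to wild ramification, because it is a statement about $K$-analytic manifolds. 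Indeed, $\sharp_{\st}^{M}(V,E)$ is \emph{defined} in the paper as $\sharp_{\st}(V^{|M|,\nu},E^{|M|,\nu})/\sharp C_{G}(H)$, so an argument that never constructs this untwisted space (or an equivalent device) cannot even engage with the quantity being computed. What you flag as ``the main obstacle''---following the different through the ramification filtration---is exactly the unresolved step, and the paper resolves it not by ramification bookkeeping but by absorbing both the different and the crepancy into the divisor $E^{|M|,\nu}$ on the untwisted variety.

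Two smaller points. Your preliminary reduction---fixing $x\in X(k)$ and replacing $(X,D)$ étale-locally by a ``convenient model'' with ``good local structure''---is neither used in the paper nor safe: crepant invariance (Corollary \ref{cor:modification}) is proved for proper birational morphisms, and the paper explicitly warns that one cannot locally linearize a group action in this setting, unlike over an algebraically closed field of characteristic zero, so there is no canonical local form to reduce to. The paper instead works globally, using a $G$-equivariant closed embedding $V\hookrightarrow\AA_{\cO_{K}}^{n}$ (always available for affine $V$) and handling the non-affine case by gluing the measures $\mu_{V,E,i}^{M}$ over a $G$-stable affine cover; your closing remark about the non-affine case is consistent with that, but it presupposes the affine case, which is where the missing untwisting argument lives.
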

This is the point-counting version of a conjecture in \cite{Yasuda:2014fk2}.
The proof basically follows the strategy presented in \cite{Yasuda:2013fk,Yasuda:2014fk2},
which generalize arguments in characteristic zero by Denef-Loeser
\cite{MR1905024}, except that we use $p$-adic measures instead of
motivic integration. This switch, from motives to numbers of points,
and from motivic integration to $p$-adic measures, enables us to
avoid the use of conjectural moduli spaces which the author relied
on in \cite{Yasuda:2013fk,Yasuda:2014fk2}. It also makes a large
part of the arguments much simpler. Although the author believes that
we would have the desired moduli spaces and prove more general and
stronger results by means of motivic integration in near future, it
would be nice to have an elementary and short proof of a result a
little weaker but still strong enough for many applications. In the
text, we prove a slightly more general result than the theorem above:
we prove $\sharp_{\st}(X,D)_{\overline{C}}=\sharp_{\st}^{G}(V,E)_{C}$
for a $G$-stable constructible subset $C$ of $V\otimes_{\cO_{K}}k$
and its image $\overline{C}$ in $X\otimes_{\cO_{K}}k$.

It is suggestive to write the equality of the theorem as 
\[
\sum_{x\in X(k)}\sharp_{\st}(X,D)_{x}=\sum_{M}\sharp_{\st}^{M}(V,E),
\]
an equality between a weighted count of $k$-points of $X$ and one
of $G$-étale $K$-algebras. A particularly interesting situation
of the theorem is as follows. We suppose that $V=\AA_{\cO_{K}}^{n}$,
that the $G$-action is linear and has no pseudo-reflection, and that
there exists a crepant proper birational morphism $Y\to X$ with $Y$
regular. If we denote the $\cO_{K}$-smooth locus of $Y$ by $Y_{\sm}$,
then the theorem reduces to the form 
\begin{equation}
\sharp Y_{\sm}(k)=\sum_{M}\frac{q^{n-\bv_{V}(M)}}{\sharp\Aut^{G}(M/K)}.\label{mass}
\end{equation}
When it is possible to count $k$-points of $Y_{\sm}$ explicitly,
we readily obtain a \emph{mass formula} for $G$-étale $K$-algebras
with respect to weights $\frac{q^{n-\bv_{V}(M)}}{\sharp\Aut^{G}(M/K)}$. 

Serre \cite{MR500361} proved a beautiful mass formula for totally
ramified field extensions $L/K$ of fixed degree with respect to weights
determined by discriminants. He gave two different proofs. Krasner
\cite{zbMATH03663278} gave an alternative proof by using a formula
which had been obtained by himself. As far as the author knows, these
have been all known proofs of Serre's mass formula. Bhargava \cite{MR2354798}
proved a similar mass formula for all étale $K$-algebras of fixed
degree, using Serre's formula. Kedlaya \cite{MR2354797} interpreted
Bhargava's formula as a mass formula for local Galois representations
(that is, continuous homomorphisms $\Gal(K^{\sep}/K)\to S_{n}\subset\GL n{\CC}$)
with respect to the Artin conductor. He then studied the case where
$S_{n}$ is replaced with other groups. 

Wood and Yasuda \cite{Wood-Yasuda-I} showed a close relation between
the function $\bv_{V}$ and the Artin conductor. Using this and a
desingularization by the Hilbert scheme of points, we can deduce Bhargava's
formula as a special case of formula (\ref{mass}). A similar relation
between Bhargava's formula and the Hilbert scheme of points was discussed
in \cite{Wood-Yasuda-I}. Then, using for instance the exponential
formula relating Serre's and Bhargava's formulas \cite[page 8]{MR2354797},
we can give a new proof of Serre's formula. In a similar way, we can
also prove Kedlaya's mass formula for the group of signed permutation
matrices in $\GL n{\CC}$ \cite{MR2354797}, unless $K$ has residual
characteristic two. Detailed computation of the last example will
be given in \cite{Wood-Yasuda-II}. These new proofs of mass formulas
are not as easy as the original ones. However they are interesting
because they fit into the general framework of the wild McKay correspondence
and reduce the problem to explicit computation of desingularization,
which seems unrelated at first glance.

The paper is organized as follows. In section \ref{sec:Convention-and-notation}
we set our convention and notation. In section \ref{sec:p-adic-measrues}
we recall $K$-analytic manifolds with $K$ a local field and $p$-adic
measures on them associated to differential forms. In section \ref{sec:log pairs}
we define stringy point counts of log pairs. In section \ref{sec:Group-actions}
we show a certain one-to-one correspondence of points associated to
a Galois cover of varieties. In section \ref{sec:Untwisting} we discuss
the untwisting technique, which is the technical core of the proof
of our main result. In section \ref{sec:Main-results} we prove the
main result. In section \ref{sec:mass formulas} we discuss the case
where a finite group linearly acts on an affine space and its application
to mass formulas.

\subsection*{Acknowledgments}

The author would like to thank Tomoyoshi Ibukiyama and Seidai Yasuda
for letting me know Serre's mass formula, and Melanie Matchett Wood
for helpful discussions during their joint works. He is also indebted
to two referees for reading carefully the paper and suggestions for
improvement. In particular, the present proof of Lemma \ref{lem:null-set-1},
which is simpler than the previous one, follows a suggestion by one
of them.

\section{Convention and notation\label{sec:Convention-and-notation}}

\subsection{}

Throughout the paper, we fix a non-archimedean local field $K$, that
is, a finite extension of either $\QQ_{p}$ or $\FF_{p}((t))$. We
denote its integer ring by $\cO_{K}$, its residue field by $k$,
the cardinality of $k$ by $q$, and the maximal ideal of $\cO_{K}$
by $\fm_{K}$.

\subsection{}

We usually denote by $M$ an $G$-étale $K$-algebra (section \ref{sub:G-alg})
and by $\Spec L$ a connected component of $\Spec M$ so that $L$
is a finite separable field extension of $K$. We denote by $\cO_{M}$
and $\cO_{L}$ the rings of integers of $M$ and $L$ respectively.
We denote by $H$ the stabilizer subgroup of $G$ of this component.

\subsection{}

If $R$ is either $K$, $\cO_{K}$ or $k$, an $R$\emph{-variety
}means a reduced quasi-projective $R$-scheme $X$ such that
\begin{itemize}
\item $X$ is flat and of finite type over $R$,
\item $X$ is equi-dimensional over $R$; all irreducible components have
the same relative dimension over $R$, and
\item the structure morphism $X\to\Spec R$ is smooth on an open dense subscheme
of $X$. 
\end{itemize}
The \emph{dimension }of an $R$-variety always means its relative
dimension over $R$. We usually denote the dimension of a variety
by $d$ (or $n$ when the variety is an affine space). For an $\cO_{K}$-variety
$X$, we define $X_{K}:=X\otimes_{\cO_{K}}K$ and $X_{k}:=X\otimes_{\cO_{K}}k$.
Note that $X_{k}$ is not generally reduced or a $k$-variety.

\subsection{}

For an $\cO_{K}$-variety $X$, we denote by $X(\cO_{K})^{\circ}$
the set of the $\cO_{K}$-points $\Spec\cO_{K}\to X$ that send the
generic point of $\Spec\cO_{K}$ into the locus where $X$ is $\cO_{K}$-smooth.

\subsection{}

Groups act on schemes from left and on rings, fields and modules from
right, unless otherwise noted. Thus, for an affine scheme $\Spec R$,
if a group $G$ acts on $\Spec R$ and if $g:\Spec R\to\Spec R$ is
the automorphism induced by $g\in G$, then we have the corresponding
ring automorphism $g^{*}:R\to R$ and the same group $G$ naturally
acts on $R$ by $r\cdot g:=g^{*}(r)$, $r\in R$. Conversely, a $G$-action
on $R$ gives a natural $G$-action on $\Spec R$ in a similar way.

\section{$p$-adic measures on $K$-analytic manifolds\label{sec:p-adic-measrues}}

In this section, we review basic materials on the Haar measure on
$K^{d}$ for a local field $K$ and a measure on a $K$-analytic manifold
induced by a differential form.

\subsection{\label{sub:K}}

Let $K$ be a non-archimedean local field, that is, a finite field
extension of $\QQ_{p}$ or $\FF_{p}((t))$. We denote its integer
ring and residue field by $\cO_{K}$ and $k$ respectively. We always
denote the cardinality of $k$ by $q$. Let $|\cdot|$ be the normalized
absolute value on $K$ so that $|\varpi|=q^{-1}$ for a uniformizer
$\varpi\in\cO_{K}$. For an integer $d\ge0$, we define $\mu_{K^{d}}$
to be the Haar measure of $K^{d}$ normalized so that $\mu_{K^{d}}(\cO_{K}^{d})=1$.
A function $f:U\to K$ on an open subset $U\subset K^{d}$ is called
$K$-\emph{analytic }if in a neighborhood of every point of $U$,
$f$ is expressed as a convergent Taylor series. 
\begin{lem}
\label{lem:null-set-1}Let $f:U\to K$ be a $K$-analytic function
defined on an open compact subset $U\subset K^{d}$. Suppose that
$f$ is nowhere locally constant. Then $\mu_{K^{d}}(f^{-1}(0))=0$.\end{lem}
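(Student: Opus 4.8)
The plan is to reduce the statement to a single claim about power series and then prove that claim by induction on the dimension $d$.

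First I would exploit the compactness of $U$. By the definition of $K$-analyticity, every point of $U$ has a neighborhood on which $f$ agrees with a convergent power series, and finitely many such neighborhoods $B_{1},\dots,B_{r}$, which we may take to be balls, cover $U$. After an affine change of coordinates identifying each $B_{i}$ with $\cO_{K}^{d}$, the restriction $f|_{B_{i}}$ becomes a convergent power series $P_{i}$ whose coefficients tend to $0$. Since $f$ is nowhere locally constant, $f$ is in particular not identically zero on any $B_{i}$, so each $P_{i}$ is a nonzero power series. As $f^{-1}(0)$ is the finite union of the sets $\{x\in B_{i}:P_{i}(x)=0\}$, it suffices to prove the following claim.

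\begin{claim*}
The zero set $Z$ of a nonzero convergent power series $P$ on $\cO_{K}^{d}$ satisfies $\mu_{K^{d}}(Z)=0$.
\end{claim*}

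I would prove the claim by induction on $d$. For $d=1$, Strassmann's theorem states that a nonzero power series with coefficients tending to $0$ has only finitely many zeros in $\cO_{K}$, so $Z$ is finite and $\mu_{K}(Z)=0$. For the inductive step, write $P(x_{1},\dots,x_{d})=\sum_{j\ge0}P_{j}(x_{1},\dots,x_{d-1})\,x_{d}^{j}$, where each $P_{j}$ is a convergent power series in $d-1$ variables; since $P\ne0$, some $P_{j_{0}}\ne0$. By Fubini's theorem,
\[
\mu_{K^{d}}(Z)=\int_{\cO_{K}^{d-1}}\mu_{K}\bigl(\{x_{d}:P(y,x_{d})=0\}\bigr)\,d\mu_{K^{d-1}}(y).
\]
For each fixed $y\in\cO_{K}^{d-1}$ with $P_{j_{0}}(y)\ne0$, the series $P(y,\cdot)$ is a nonzero one-variable power series whose coefficients $P_{j}(y)$ tend to $0$ (a consequence of the joint convergence of $P$), so by the case $d=1$ its zero set in $\cO_{K}$ has measure $0$. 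The remaining base points form a subset of $\{y:P_{j_{0}}(y)=0\}$, which has $\mu_{K^{d-1}}$-measure $0$ by the inductive hypothesis applied to the nonzero $(d-1)$-variable series $P_{j_{0}}$. Hence the integrand vanishes for almost every $y$, and $\mu_{K^{d}}(Z)=0$.

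The main obstacle is the base case: the finiteness of the zero set of a nonzero one-variable convergent power series, which is exactly the content of Strassmann's theorem (equivalently, $p$-adic Weierstrass preparation). The only points needing care in the inductive step are the verification that, for a fixed $y$, the slice coefficients $P_{j}(y)$ decay so that Strassmann applies, and that the exceptional locus of base points is confined to the measure-zero zero set of a single nonzero lower-dimensional series; both are routine once the coefficients of $P$ are known to tend to $0$ jointly. I expect no difficulty from measurability, since all sets involved are Borel.
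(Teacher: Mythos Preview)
Your argument is correct and follows the same overall architecture as the paper's proof: reduce to a local statement about a single convergent power series, handle $d=1$ via the discreteness/finiteness of zeros, and for $d>1$ slice along the last coordinate and apply Fubini. The point of departure is the technical tool used to control the slices. The paper first finds a line through the origin on which $f$ is nonconstant, places that line as the $x_{d}$-axis, and then invokes the Weierstrass preparation theorem to factor $f$ as a unit times a monic polynomial $x_{d}^{m}+f_{1}x_{d}^{m-1}+\cdots+f_{m}$; the payoff is that \emph{every} slice is then a nonzero polynomial in $x_{d}$, so only the one-variable case is needed and the induction hypothesis in dimension $d-1$ is never actually invoked. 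You instead expand $P$ directly as $\sum_{j}P_{j}(y)x_{d}^{j}$, pick some nonzero $P_{j_{0}}$, and absorb the possibly bad slices (where $P_{j_{0}}(y)=0$) into a set of measure zero using the genuine inductive hypothesis on $P_{j_{0}}$. Your route is a bit more elementary in that it avoids Weierstrass preparation (trading it for the lighter Strassmann theorem in the base case), at the cost of needing the full induction rather than a one-step reduction to $d=1$; the paper's route, conversely, makes the slice analysis uniform at the price of citing the preparation theorem.
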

\begin{proof}
This result should be well-known to specialists, though the author
could not find a reference. For the sake of completeness, we give
a proof here, which follows a suggestion of a referee. (A similar
result is found in Igusa's book \cite[Lemma 8.3.1]{MR1743467}. However
he proves it only for the characteristic zero case.) 

The proof is by induction on $d$. If $d=0$, there is nothing to
prove. If $d=1$, then looking at the Taylor expansion, we see that
$f^{-1}(0)$ is a discrete subset of $U$ and has measure zero. For
$d>1$, it suffices to show that $f^{-1}(0)$ contains the origin
$o\in K^{d}$ and there exists an open neighborhood $V$ of $o$ such
that $f^{-1}(0)\cap V$ has measure zero. There exists a line $L\cong K$
passing through $o$ such that $f|_{L\cap U}$ is not locally constant
around $o$. Indeed, if there does not exist such a line and if $f$
is expressed as a power series on a neighborhood $V$ of $o$, then
$f$ is constant on $V$, which contradicts the assumption. By a suitable
linear transform, we may assume that $L$ is given by $x_{1}=\cdots=x_{d-1}=0$.
From the Weierstrass preparation theorem (for instance, see \cite[Theorem 2.3.1]{MR1743467}),
$f$ is of the form
\[
g(x_{1},\dots,x_{d})\left(x_{d}^{m}+f_{1}(x_{1},\dots,x_{d-1})x_{d}^{m-1}+\cdots+f_{m}(x_{1},\dots,x_{d-1})\right)
\]
on a neighborhood $V$ of $o$, where $g,f_{1},\dots,f_{m}$ are convergent
power series such that $g$ is nowhere vanishing on $V$. Define $h:V\to K$
by 
\[
h:=f/g=x_{d}^{m}+f_{1}(x_{1},\dots,x_{d-1})x_{d}^{m-1}+\cdots+f_{m}(x_{1},\dots,x_{d-1}),
\]
which has the same zero locus as $f|_{V}$. Let $\pi:V\to K^{d-1}$
be the projection to the first $d-1$ coordinates. For every $y\in\pi(V)$,
$h|_{\pi^{-1}(y)}$ is a function given by a nonzero polynomial, in
particular, it is nowhere locally constant. From the case $d=1$,
its zero locus has measure zero with respect to $\mu_{K}$. Now the
Fubini-Tonelli theorem (for instance, see \cite[Theorems A and B, page 147]{MR0033869})
shows that $f^{-1}(0)\cap V=h^{-1}(0)$ has measure zero with respect
to $\mu_{K^{d}}$, which completes the proof. 
\end{proof}

\subsection{\label{sub:K-an def}}

$K$-\emph{analytic manifolds} are defined in a similar way as ordinary
manifolds are defined. For details, we refer the reader to \cite[section 2.4]{MR1743467}.
We can similarly define $K$-\emph{analytic differential forms }as
well. Let $X$ be a $K$-analytic manifold of dimension $d$. For
a $K$-analytic $d$-form $\omega$ on $X$ and an open compact subset
$U$ of $X$, one can define the integral 
\[
\int_{U}|\omega|\in\RR_{\ge0}.
\]
When $\omega$ is written as $f(x)dx_{1}\wedge\cdots\wedge dx_{d}$
for a $K$-analytic function $f(x)$ and local coordinates $x_{1},\dots,x_{d}$
on $U$, then 
\[
\int_{U}|\omega|=\int_{U'}|f(x)|\,d\mu_{K^{d}},
\]
where $U'$ is the open subset of $K^{n}$ corresponding to $U$.
Thus a $K$-analytic $d$-form defines a measure $\mu_{\omega}$ on
$X$: for a compact open subset $U\subset X$, 
\[
\mu_{\omega}(U):=\int_{U}|\omega|\in\RR_{\ge0}
\]
and for an arbitrary open subset $U\subset X$, 
\[
\mu_{\omega}(U):=\sup\{\mu_{\omega}(U')\mid U'\subset U\text{: open and compact}\}\in\RR_{\ge0}\cup\{\infty\}.
\]

For our purpose, we need to generalize this slightly more as did in
\cite{MR2098399,MR1678489}. Let $\omega$ be an $r$-fold $d$-form,
that is, a section of $(\Omega_{X}^{d})^{\otimes r}$. Here $\Omega_{X}^{d}$
is the sheaf of $K$-analytic $d$-forms and the tensor product is
taken over the sheaf of $K$-analytic functions. Locally $\omega$
is written as $f(x)(dx_{1}\wedge\cdots\wedge dx_{d})^{\otimes r}$
with $f(x)$ a $K$-analytic function, say on an open compact subset
$U$. We then define
\[
\int_{U}|\omega|^{1/r}:=\int_{U'}|f(x)|^{1/r}\,d\mu_{K^{d}}
\]
and extend the definition to an arbitrary $r$-fold $d$-form on $X$
in the obvious way. We define the measure $\mu_{\omega}$ by 
\[
\mu_{\omega}(U):=\int_{U}|\omega|^{1/r}
\]
for an open compact $U$, similarly for an arbitrary open subset.

\section{Log pairs\label{sec:log pairs}}

Using measures on $K$-analytic manifolds considered in the preceding
section, we introduce, in this section, the notion of \emph{stringy
point count }for log pairs and study its basic properties.

\subsection{\label{sub:ideal sheaf}}

Let $X$ be a $d$-dimensional $\cO_{K}$-variety. We write $X_{k}:=X\otimes_{\cO_{K}}k$
and $X_{K}:=X\otimes_{\cO_{K}}K$. Let $X_{K,\sm}$ be the $K$-smooth
locus of $X_{K}$ and let 
\[
X(\cO_{K})^{\circ}:=X(\cO_{K})\cap X_{K,\sm}(K),
\]
thinking of $X(\cO_{K})$ as a subset of $X(K)=X_{K}(K)$. This set
$X(\cO_{K})^{\circ}$ has a natural structure of a $K$-analytic manifold. 

Let $\cI$ be an invertible $\cO_{X}$-submodule of $(\Omega_{X/\cO_{K}}^{d})^{\otimes r}\otimes K(X)$,
where $K(X)$ is the sheaf of total quotient rings of $X$. We define
a measure $\mu_{\cI}$ on $X(\cO_{K})^{\circ}$ as follows. Let $X=\bigcup U_{i}$
be a Zariski open cover so that $X(\cO_{K})^{\circ}=\bigcup U_{i}(\cO_{K})^{\circ}$
and $\cI|_{U_{i}}$ is a free $\cO_{U_{i}}$-module. Let $\omega_{i}\in\cI|_{U_{i}}$
be a generator. It defines an $r$-fold $d$-form $\omega_{i}^{\an}$
on the $K$-analytic manifold $U_{i}(\cO_{K})^{\circ}$ in the obvious
way and the measure $\mu_{\omega_{i}^{\an}}$. If $\omega'_{i}$ is
another generator of $\cI|_{U_{i}}$, then there exists a nowhere
vanishing regular function $f$ on $U_{i}$ such that $\omega_{i}=f\omega_{i}'$.
If $f^{\an}$ is the corresponding $K$-analytic function on $U_{i}(\cO_{K})^{\circ}$,
then $|f^{\an}|\equiv1$. Therefore the measure $\mu_{\omega_{i}^{\an}}$
does not depend on the choice of generator. Now it is clear that the
measures $\mu_{\omega_{i}^{\an}}$ for different $i$ glue together
and define a measure on the entire space $X(\cO_{K})^{\circ}$: we
denote it by $\mu_{\cI}$. We further extend the measure $\mu_{\cI}$
to $X(\cO_{K})$, declaring that all subsets of $X(\cO_{K})\setminus X(\cO_{K})^{\circ}$
have measure zero.

The following lemma is a slight generalization of \cite[Theorem 2.2.5]{MR670072}.
\begin{lem}
\label{lem:Weil estimate}If $X$ is $\cO_{K}$-smooth and $\cI=(\Omega_{X/\cO_{K}}^{d})^{\otimes r}$,
then
\[
\mu_{\cI}(X(\cO_{K}))=\frac{\sharp X(k)}{q^{d}}.
\]
\end{lem}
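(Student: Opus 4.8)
The plan is to reduce to the classical Weil computation by first discarding the $r$-fold structure and then integrating over the fibers of the reduction map. Since $X$ is $\cO_{K}$-smooth, the smooth locus is everything, so $X(\cO_{K})^{\circ}=X(\cO_{K})$ and $\cI=(\Omega_{X/\cO_{K}}^{d})^{\otimes r}$ is invertible. On a Zariski open $U$ on which $\Omega_{U/\cO_{K}}^{d}$ has a generator $\omega_{0}$, the tensor power $\omega_{0}^{\otimes r}$ generates $\cI|_{U}$; writing $\omega_{0}^{\an}=g\,dx_{1}\wedge\cdots\wedge dx_{d}$ we get $\omega_{0}^{\otimes r}=g^{r}(dx_{1}\wedge\cdots\wedge dx_{d})^{\otimes r}$, and $|g^{r}|^{1/r}=|g|$. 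Hence $\mu_{\cI}$ coincides with the measure attached to $\Omega_{X/\cO_{K}}^{d}$ itself, and I may assume $r=1$.

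Because $X$ is of finite type over $\cO_{K}$ and $k$ is finite, $X(k)$ is finite, and the reduction map $\red\colon X(\cO_{K})\to X(k)$ is surjective by smoothness and the completeness of $\cO_{K}$ (Hensel's lemma). This partitions $X(\cO_{K})$ into the finitely many \emph{residue disks} $D_{\bar x}:=\red^{-1}(\bar x)$, each of which is open and compact, so that $X(\cO_{K})$ is itself compact and
\[
\mu_{\cI}(X(\cO_{K}))=\sum_{\bar x\in X(k)}\mu_{\cI}(D_{\bar x}).
\]
It therefore suffices to prove $\mu_{\cI}(D_{\bar x})=q^{-d}$ for every $\bar x$.

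To compute $\mu_{\cI}(D_{\bar x})$, I would choose an affine neighborhood $U$ of $\bar x$ together with an étale morphism $\phi=(x_{1},\dots,x_{d})\colon U\to\AA_{\cO_{K}}^{d}$, which exists near any smooth point; then, writing $t_{1},\dots,t_{d}$ for the standard coordinates on $\AA_{\cO_{K}}^{d}$, the form $\omega:=dx_{1}\wedge\cdots\wedge dx_{d}=\phi^{*}(dt_{1}\wedge\cdots\wedge dt_{d})$ generates $\Omega_{U/\cO_{K}}^{d}$ near $\bar x$. Since étale morphisms induce analytic isomorphisms between residue disks, $\phi^{\an}$ maps $D_{\bar x}$ bijectively onto a residue disk of $\AA_{\cO_{K}}^{d}(\cO_{K})=\cO_{K}^{d}$, and as it carries the generator $\omega$ to the standard form it is measure-preserving; thus $\mu_{\cI}(D_{\bar x})$ equals the Haar measure of that residue disk. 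As $\cO_{K}^{d}$ is the disjoint union of its $q^{d}$ residue disks, all of equal measure under the normalization $\mu_{K^{d}}(\cO_{K}^{d})=1$, each has measure $q^{-d}$. Summing over the $\sharp X(k)$ residue disks yields the claim.

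The main obstacle is the verification that $\phi^{\an}$ restricts to a measure-preserving analytic isomorphism between residue disks. This has two ingredients: first, that the étale $\phi$ induces a bijection of $D_{\bar x}$ onto a residue disk of $\cO_{K}^{d}$, which rests on formal étaleness together with Hensel's lemma over the complete ring $\cO_{K}$; and second, that the change-of-variables factor is identically $1$ on this disk, which holds because $\omega=\phi^{*}(dt_{1}\wedge\cdots\wedge dt_{d})$ forces the Jacobian of $\phi$ to be a unit. Granting these, the remainder is bookkeeping with the normalization $\mu_{K^{d}}(\cO_{K}^{d})=1$ and the finiteness of $X(k)$.
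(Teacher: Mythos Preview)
Your proof is correct and takes essentially the same approach as the paper: decompose $X(\cO_{K})$ into residue disks over the points of $X(k)$ and compute each disk's measure as $q^{-d}$ via local coordinates. The paper is terser—it handles general $r$ directly (the local generator $(dx_{1}\wedge\cdots\wedge dx_{d})^{\otimes r}$ gives integrand $|1|^{1/r}=1$) and simply asserts the bijection of the residue disk with $\fm_{K}^{d}$ without spelling out the Hensel/\'etale argument you provide.
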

\begin{proof}
For $x\in X(k)$, let $X(\cO_{K})_{x}$ be the set of $\cO_{K}$-point
which induce $x$ by composition with $\Spec k\to\Spec\cO_{K}$. If
$x_{1},\dots,x_{d}$ are local coordinates around $x$, then they
give a bijection from $X(\cO_{K})_{x}$ onto $\fm_{K}^{d}\subset K^{d}$.
On the other hand, $\cI$ has a local generator $(dx_{1}\wedge\cdots\wedge dx_{d})^{\otimes r}$.
Therefore 
\[
\mu_{\cI}(X(\cO_{K})_{x})=\int_{\fm_{K}^{d}}1\,d\mu_{K^{d}}=\mu_{K^{d}}(\fm_{K}^{d})=q^{-d}
\]
and the lemma follows.\end{proof}
\begin{lem}
\label{lem:power ideal}For an integer $s>0$, regarding $\cI^{\otimes s}$
as an $\cO_{K}$-submodule of $(\Omega_{X/\cO_{K}}^{d})^{\otimes sr}\otimes K(X)$,
we have $\mu_{\cI}=\mu_{\cI^{\otimes s}}.$\end{lem}
\begin{proof}
If $\cI$ has a local generator $f(x)(dx_{1}\wedge\cdots\wedge dx_{d})^{\otimes r}$,
then $f(x)^{s}(dx_{1}\wedge\cdots\wedge dx_{d})^{\otimes sr}$ is
a local generator of $\cI^{\otimes s}$, and we have
\[
\int|f(x)|^{1/r}\,d\mu_{K^{d}}=\int|f(x)^{s}|^{1/(rs)}\,d\mu_{K^{d}},
\]
where the integrals are taken over a suitable open compact subset
of $K^{d}$. The lemma easily follows.\end{proof}
\begin{lem}
\label{lem:measure zero 2}For a subscheme $Y\subset X$ of positive
codimension, 
\[
\mu_{\cI}(Y(\cO_{K}))=0.
\]
\end{lem}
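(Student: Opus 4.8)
The plan is to reduce the statement, via the very construction of $\mu_{\cI}$, to a chart-by-chart application of Lemma \ref{lem:null-set-1}.

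First I would discard the part lying outside $X(\cO_{K})^{\circ}$: by definition of $\mu_{\cI}$ every subset of $X(\cO_{K})\setminus X(\cO_{K})^{\circ}$ is null, so it suffices to show $\mu_{\cI}(Y(\cO_{K})\cap X(\cO_{K})^{\circ})=0$. On each coordinate chart $U$ of the $K$-analytic manifold $X(\cO_{K})^{\circ}$ the measure $\mu_{\cI}$ has the shape $|f(x)|^{1/r}\,d\mu_{K^{d}}$ for a $K$-analytic $f$; in particular $\mu_{\cI}$ is absolutely continuous with respect to the Haar measure $\mu_{K^{d}}$, so $\mu_{K^{d}}$-null sets are $\mu_{\cI}$-null. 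Hence it is enough to prove that the image of $Y(\cO_{K})\cap U$ in $K^{d}$ has $\mu_{K^{d}}$-measure zero. Since $X(\cO_{K})^{\circ}$ is a countable union of compact open balls, I may cover it by countably many such balls $B$, chosen small enough that on each the regular functions I need are represented by single convergent power series, and argue one ball at a time.

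Next, fix such a ball $B$, which I may take to lie in the smooth locus of a single irreducible component of $X_{K}$ of relative dimension $d$. Because $Y$ has positive codimension it does not contain that component, so on a Zariski-open affine neighborhood there is a nonzero regular function $g$ vanishing on $Y$. Its analytification $g^{\an}$ is a convergent power series on $B$, and I claim it is not identically zero there: otherwise $g$ would vanish on the nonempty analytic-open set $B$, which is $d$-dimensional and therefore Zariski dense in the component, forcing $g\equiv0$. A nonzero power series on a ball is either a nonzero constant, in which case $\{g^{\an}=0\}\cap B=\varnothing\supseteq Y(\cO_{K})\cap B$, or non-constant, in which case the power-series identity theorem shows it is nowhere locally constant on $B$; then Lemma \ref{lem:null-set-1} gives $\mu_{K^{d}}(\{g^{\an}=0\})=0$, and since $Y(\cO_{K})\cap B\subseteq\{g^{\an}=0\}$ this set is $\mu_{K^{d}}$-null. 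Summing over the countably many balls yields $\mu_{\cI}(Y(\cO_{K}))=0$.

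The step I expect to be the main obstacle is extracting, on each ball, a genuinely \emph{nowhere locally constant} analytic function from the algebraic hypothesis $\codim Y>0$. The subtlety is that $K^{d}$ is totally disconnected, so one cannot invoke a global identity theorem; instead one must work ball by ball, using that a single convergent power series vanishing on an open ball vanishes identically, together with the fact that a nonempty analytic-open subset of the $d$-dimensional smooth locus is Zariski dense and hence cannot lie in the zero locus of a nonzero regular function. Once this local non-degeneracy is secured, Lemma \ref{lem:null-set-1} and the absolute continuity of $\mu_{\cI}$ finish the argument routinely.
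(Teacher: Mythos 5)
Your proof is correct and takes essentially the same route as the paper, whose entire proof is the single sentence that the lemma is a direct consequence of Lemma \ref{lem:null-set-1}; your chart-by-chart reduction (discarding $X(\cO_{K})\setminus X(\cO_{K})^{\circ}$, absolute continuity of $\mu_{\cI}$ with respect to $\mu_{K^{d}}$, then producing a nowhere locally constant analytic function vanishing on $Y(\cO_{K})$) is precisely the detail the paper leaves implicit. One small fix: since $X$ may be reducible, choose $g$ in the ideal of $Y$ that does not vanish identically on the irreducible component containing the ball (possible exactly because $\codim Y>0$), so that the Zariski-density argument genuinely yields a contradiction on that component rather than merely $g\equiv 0$ there.
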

\begin{proof}
This is a direct consequence of Lemma \ref{lem:null-set-1}.
\end{proof}

\subsection{\label{sub:log pair}}

For a normal $\cO_{K}$-variety $X$, the canonical sheaf $\omega_{X}=\omega_{X/\cO_{K}}$
is defined as in \cite[pages 7--8]{MR3057950}. It is a reflexive
sheaf, in particular, locally free in codimension one, and coincides
with $\Omega_{X/\cO_{K}}^{d}:=\bigwedge^{d}\Omega_{X/\cO_{K}}$ on
the $\cO_{K}$-smooth locus. We denote the corresponding divisor by
$K_{X}$, which is determined up to linear equivalence. 
\begin{defn}
A \emph{log pair }means the pair $(X,D)$ of a normal $\cO_{K}$-variety
$X$ and a $\QQ$-divisor $D$ (a Weil divisor with rational coefficients)
on $X$ such that $K_{X}+D$ is $\QQ$-Cartier. We sometimes call
a log pair $(X,D)$ a \emph{log structure }on $X$.
\end{defn}
We identify a normal $\QQ$-Gorenstein ($K_{X}$ is $\QQ$-Cartier)
$\cO_{K}$-variety $X$ with the log pair $(X,0)$. 

Let $(X,D)$ be a log pair and let $r\in\NN$ be such that $r(K_{X}+D)$
is Cartier. Then the invertible sheaf $\cO_{X}(r(K_{X}+D))$ is naturally
a subsheaf of $(\Omega_{X/\cO_{K}}^{d})^{\otimes r}\otimes K(X)$. 
\begin{defn}
A \emph{morphism }of log pairs, $f:(Y,E)\to(X,D)$, is a morphism
of the underlying varieties, $f:Y\to X$. We say that a morphism $f:(Y,E)\to(X,D)$
is \emph{crepant }if for $r\in\NN$ such that $r(K_{X}+D)$ and $r(K_{Y}+E)$
are both Cartier, the canonical morphism $f^{*}(\Omega_{X/\cO_{K}}^{d})^{\otimes r}\to(\Omega_{Y/\cO_{K}}^{d})^{\otimes r}$
induces an isomorphism $f^{*}\cO_{X}(r(K_{X}+D))\to\cO_{Y}(r(K_{Y}+E))$. \end{defn}
\begin{lem}
Let $(X,D)$ be a log pair and let $f:Y\to X$ be a generically étale
morphism of normal varieties. There exists a unique $\QQ$-divisor
$E$ on $Y$ such that the morphism $f:(Y,E)\to(X,D)$ is crepant.\end{lem}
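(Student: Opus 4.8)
The plan is to build $E$ from the canonical comparison of top forms and to read off its coefficients from the ramification of $f$. First I would fix $r\in\NN$ with $r(K_{X}+D)$ Cartier and consider the canonical morphism
\[
\phi\colon f^{*}(\Omega_{X/\cO_{K}}^{d})^{\otimes r}\longrightarrow(\Omega_{Y/\cO_{K}}^{d})^{\otimes r}.
\]
Since $f$ is generically étale, the differential $df$ is an isomorphism at the generic point of each component of $Y$, so $\phi$ becomes an isomorphism after tensoring with $K(Y)$ over $\cO_{Y}$; this is the one place where the étaleness hypothesis, rather than mere dominance, enters. The invertible sheaf $\cO_{X}(r(K_{X}+D))$ sits inside $(\Omega_{X/\cO_{K}}^{d})^{\otimes r}\otimes K(X)$, so pulling it back and applying this generic isomorphism produces an invertible $\cO_{Y}$-submodule $\cM\subseteq(\Omega_{Y/\cO_{K}}^{d})^{\otimes r}\otimes K(Y)$, namely the image of $f^{*}\cO_{X}(r(K_{X}+D))$ under $\phi$ extended to $K(Y)$.

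Next I would define $E$ by comparing $\cM$ with $K_{Y}$. On the $\cO_{K}$-smooth locus $Y_{\sm}$, whose complement in $Y$ has codimension $\ge 2$ because $Y$ is normal, the sheaf $(\Omega_{Y/\cO_{K}}^{d})^{\otimes r}$ is the invertible sheaf $\cO_{Y}(rK_{Y})$, so $\cM|_{Y_{\sm}}=\cO_{Y_{\sm}}(rK_{Y}+B)$ for a Cartier divisor $B$ on $Y_{\sm}$. Extending $B$ to a Weil divisor on $Y$ (uniquely, by normality and the codimension bound) and setting $E:=B/r$, I obtain a $\QQ$-divisor with $\cO_{Y}(r(K_{Y}+E))=\cM$. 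Since $\cM$ is invertible, $r(K_{Y}+E)$ is Cartier, and by construction $\phi$ restricts to the required isomorphism $f^{*}\cO_{X}(r(K_{X}+D))\xrightarrow{\sim}\cO_{Y}(r(K_{Y}+E))$; thus $f\colon(Y,E)\to(X,D)$ is crepant. Over the open set where $f$ is étale this reads $K_{Y}+E=f^{*}(K_{X}+D)$, so $E$ is the pullback of $K_{X}+D$ corrected by the relative canonical (ramification) divisor $K_{Y}-f^{*}K_{X}$.

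For well-definedness I would check that replacing $r$ by a multiple $sr$ replaces $\cO_{X}(r(K_{X}+D))$ by its $s$-th tensor power, hence $\cM$ by $\cM^{\otimes s}$, leaving $E$ unchanged, so the construction is independent of $r$. For uniqueness, suppose $E'$ also makes $f$ crepant; enlarging $r$ so that $r(K_{Y}+E')$ is Cartier as well, both $\cO_{Y}(r(K_{Y}+E))$ and $\cO_{Y}(r(K_{Y}+E'))$ must equal the image $\cM$ of $f^{*}\cO_{X}(r(K_{X}+D))$ under $\phi$, as subsheaves of $(\Omega_{Y/\cO_{K}}^{d})^{\otimes r}\otimes K(Y)$. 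On a normal variety an equality of invertible subsheaves $\cO_{Y}(A)=\cO_{Y}(A')$ inside a fixed rank-one sheaf forces $A=A'$ as divisors, whence $r(K_{Y}+E)=r(K_{Y}+E')$ and $E=E'$.

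The main obstacle, such as it is, is not a hard computation but the careful sheaf-theoretic bookkeeping: one must justify that $\phi$ is a \emph{generic} isomorphism precisely from generic étaleness, and then pass consistently between the invertible subsheaf $\cM$ defined on all of $Y$, its restriction to $Y_{\sm}$ where top forms are genuinely invertible, and the Weil $\QQ$-divisor $E$ on $Y$, invoking normality and the codimension-two estimate at each step.
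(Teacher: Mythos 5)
Your overall strategy (pull $\cO_{X}(r(K_{X}+D))$ back along the canonical comparison map of top forms and read off a divisor by comparing with the canonical sheaf) is the same as the paper's, and your uniqueness argument is sound. The gap is in the construction of $E$: you assert that the complement of the $\cO_{K}$-smooth locus $Y_{\sm}$ has codimension $\ge 2$ in $Y$ ``because $Y$ is normal''. That implication holds for varieties over the perfect residue field $k$, but it is false for $\cO_{K}$-varieties: normality, and even regularity, does not prevent the structure morphism $Y\to\Spec\cO_{K}$ from failing to be smooth along an entire (necessarily vertical) divisor. For example, if $L/K$ is a ramified finite separable extension, then $Y=\AA_{\cO_{L}}^{d}=\Spec\cO_{L}[x_{1},\dots,x_{d}]$ is regular, hence normal, and is an $\cO_{K}$-variety in the sense of the paper (its generic fiber is smooth), yet it is nowhere $\cO_{K}$-smooth along its special fiber, which has codimension one. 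The paper's own framework insists on this phenomenon: in the explicit formula of section \ref{sub:explicit formula}, the decomposition of $D$ has terms $b_{i}B_{i}$ with $B_{i}\subset X_{k}$ prime divisors at whose generic points the \emph{regular} variety $X$ is not $\cO_{K}$-smooth.

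Because of this, your divisor $B$ is defined only on $Y_{\sm}$, and its extension to $Y$ is neither unique nor automatically crepant: the coefficients of $E$ along the codimension-one components of $Y\setminus Y_{\sm}$ are left undetermined, and an arbitrary choice destroys the required identification of $\cO_{Y}(r(K_{Y}+E))$ with the image of $f^{*}\cO_{X}(r(K_{X}+D))$ along exactly those components. The paper avoids restricting to the smooth locus: the canonical sheaf $\omega_{Y/\cO_{K}}$ is reflexive, hence locally free at \emph{every} codimension-one point of $Y$, smooth over $\cO_{K}$ or not, so one can compare the invertible subsheaf $f^{*}\cO_{X}(r(K_{X}+D))$ with $\omega_{Y/\cO_{K}}^{\otimes r}$ inside $(\Omega_{Y/\cO_{K}}^{d})^{\otimes r}\otimes K(Y)$ at all codimension-one points; this pins down a unique Weil divisor $E'$ with $\omega_{Y/\cO_{K}}^{\otimes r}(E')=f^{*}\cO_{X}(r(K_{X}+D))$ in codimension one, and $E=\frac{1}{r}E'$ works by reflexivity. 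Your argument is repaired by replacing ``on $Y_{\sm}$'' with ``in codimension one'' and invoking this local freeness. Note that these vertical divisors are not a removable technicality here: in the wild setting they carry precisely the ramification data (different exponents, the function $\bv_{V}$) that the paper's main theorem is about.
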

\begin{proof}
The pull-back $f^{*}\cO_{X}(r(K_{X}+D))$ is an invertible subsheaf
of $(\Omega_{Y/\cO_{K}}^{d})^{\otimes r}\otimes K(Y)$. Hence there
exists a unique Weil divisor $E'$ such that $\omega_{Y/\cO_{K}}^{\otimes r}(E')$
coincides with $f^{*}\cO_{X}(r(K_{X}+D))$ in codimension one as subsheaves
of $(\Omega_{Y/\cO_{K}}^{d})^{\otimes r}\otimes K(Y)$. Now $\frac{1}{r}E'$
is the desired $\QQ$-divisor.
\end{proof}

\subsection{\label{sub:log measure}}
\begin{defn}
Let $(X,D)$ be a log pair and $r\in\NN$ such that $r(K_{X}+D)$
is Cartier. We define the measure $\mu_{X,D}$ on $X(\cO_{K})$ to
be $\mu_{\cO_{X}(r(K_{X}+D))}$. For a normal $\QQ$-Gorenstein $\cO_{K}$-variety
$X$, we write $\mu_{X,0}$ simply as $\mu_{X}$. 
\end{defn}
From Lemma \ref{lem:power ideal}, the definition does not depend
on the choice of $r$. If $X$ is $\cO_{K}$-smooth, from Lemma \ref{lem:Weil estimate},
we have
\[
\mu_{X}(X(\cO_{K}))=\frac{\sharp X(k)}{q^{d}}.
\]

We use the following theorem many times in later sections:
\begin{thm}
\label{thm:change-vars}Let $f:(Y,E)\to(X,D)$ be a crepant morphism
of log pairs. Suppose that a finite group $G$ acts faithfully on
$Y$ and trivially on $X$ so that the morphism $f:Y\to X$ of the
underlying varieties is $G$-equivariant. Suppose that the induced
morphism $Y/G\to X$ is birational. Then, for a $G$-stable open subset
$A\subset Y(\cO_{K})$, we have 
\[
\frac{1}{\sharp G}\mu_{Y,E}(A)=\mu_{X,D}(f(A))\in\RR_{\ge0}\cup\{\infty\}.
\]
\end{thm}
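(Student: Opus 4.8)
The plan is to carry out a $p$-adic change of variables along $f$, using the crepancy hypothesis to guarantee that no Jacobian factor appears and the generic $\sharp G$-to-one structure of $f$ to produce the global factor $\sharp G$.

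First I would localize to the locus where $f$ is a finite étale $G$-torsor. Since $G$ acts faithfully on $Y$ and $Y/G\to X$ is birational, the generic fiber map $f_{K}\colon Y_{K}\to X_{K}$ is $G$-invariant (as $G$ acts trivially on $X$) and generically a $G$-torsor, hence generically étale; let $R_{K}\subset Y_{K}$ be the complement of the maximal open over which $f_{K}$ is a finite étale $G$-torsor, a proper closed subset of positive codimension, and let $R\subset Y$ be its Zariski closure. The key observation is that any $\cO_{K}$-point $y$ with $y_{K}\in R_{K}$ factors through $R$, since the closed set $R$ contains $y_{K}$ and therefore its specialization; so by Lemma \ref{lem:measure zero 2} the set of such points is $\mu_{Y,E}$-null. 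Applying the same closure argument to the branch locus $f_{K}(R_{K})\subset X_{K}$ shows that the $\cO_{K}$-points lying generically in it are $\mu_{X,D}$-null. Writing $A_{0}:=\{y\in A : y_{K}\notin R_{K}\}$, one has $f(A\setminus A_{0})$ contained in this downstairs null set, so $\mu_{Y,E}(A)=\mu_{Y,E}(A_{0})$ and $\mu_{X,D}(f(A))=\mu_{X,D}(f(A_{0}))$, and it suffices to compare the measures on $A_{0}$ and its image.

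Next I would describe the covering structure over $A_{0}$. Because $R_{K}$ is $G$-stable and $A$ is $G$-stable, the set $A_{0}$ is $G$-stable; over the torsor locus the $K$-fiber through $y_{K}$ is exactly the orbit $G\cdot y_{K}$ of $\sharp G$ distinct points, and lifting to $\cO_{K}$-points the orbit $G\cdot y\subset A_{0}$ consists of $\sharp G$ distinct points all mapping to $f(y)$. Thus $f|_{A_{0}}$ is $\sharp G$-to-one onto $f(A_{0})$, with fibers the $G$-orbits, and $f(A_{0})\subset X(\cO_{K})^{\circ}$ because everything is smooth over the torsor locus. Since each $y\in A_{0}$ lies over a point where $f_{K}$ is étale, $f$ is a local $K$-analytic isomorphism near $y$, so I can partition $f(A_{0})$ into small compact opens $B_{j}$ over each of which $A_{0}\cap f^{-1}(B_{j})$ splits as a disjoint union of $\sharp G$ analytic sheets, each mapping isomorphically onto $B_{j}$.

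Finally I would invoke the transformation formula together with crepancy. On each sheet, crepancy gives a local generator $\omega_{Y}$ of $\cO_{Y}(r(K_{Y}+E))$ equal to $f^{*}\omega_{X}$ for a local generator $\omega_{X}$ of $\cO_{X}(r(K_{X}+D))$ up to a nowhere vanishing unit, so the transformation formula for the measures $\mu_{\omega}$ of Section~\ref{sec:p-adic-measrues} gives $\mu_{Y,E}(\text{sheet})=\mu_{X,D}(B_{j})$ with no extra factor. Summing the $\sharp G$ sheets over each $B_{j}$ and then over $j$ yields $\mu_{Y,E}(A_{0})=\sharp G\cdot\mu_{X,D}(f(A_{0}))$, which rearranges to the asserted identity; the case where either side is $\infty$ follows from the same local comparison. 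I expect the main obstacle to be the measure-zero bookkeeping, namely verifying rigorously that the ramification and branch loci contribute nothing on both sides via the closure trick feeding into Lemma \ref{lem:measure zero 2}, and that after removing these null sets $f$ genuinely decomposes into $\sharp G$ disjoint analytic sheets; the transformation formula itself is routine once the crepant matching of pluricanonical forms is in hand.
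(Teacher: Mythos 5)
Your proposal is correct and follows essentially the same route as the paper's proof: both discard the ramification/branch loci as null sets via Lemma \ref{lem:measure zero 2}, reduce to a $G$-equivariant étale covering of $K$-analytic manifolds that locally splits into $\sharp G$ sheets, and use crepancy to match the pluricanonical generators so the local integrals agree with no Jacobian factor. Your write-up merely makes explicit the measure-zero bookkeeping and the sheet decomposition that the paper compresses into ``we may suppose that $A\to f(A)$ is a $G$-equivariant étale morphism with $A/G\cong f(A)$.''
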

\begin{proof}
From Lemma \ref{lem:measure zero 2}, removing measure zero subsets
from $A$ and $f(A)$, we may suppose that the map $A\to f(A)$ is
a $G$-equivariant étale morphism of $K$-analytic manifolds with
$A/G\cong f(A)$. Each point $x\in f(A)$ has a $K$-analytic open
neighborhood $U$ such that $f^{-1}(U)\subset A$ is isomorphic to
the disjoint union of $\sharp G$ copies of $U$. Let $r\in\NN$ be
such that $r(K_{X}+D)$ is Cartier and $\cI:=\cO_{X}(r(K_{X}+D))$.
For a Zariski open $V\subset X$ such that $U\subset V(\cO_{K})$,
if $\omega\in\Gamma(V,\cI)$ is a generator of $\cI|_{V}$, we clearly
have
\[
\frac{1}{\sharp G}\int_{f^{-1}(U)}|f^{-1}\omega|^{1/r}=\int_{U}|\omega|^{1/r}.
\]
It is now easy to deduce the theorem.
\end{proof}

\subsection{\label{sub:stringy count}}
\begin{defn}
Let $(X,D)$ be a log pair, $d$ the dimension of the $\cO_{K}$-variety
$X$ and $C\subset X_{k}$ a constructible subset. Let $X(\cO_{K})_{C}$
be the set of $\cO_{K}$-points sending the closed point of $\Spec\cO_{K}$
into $C$. We define the \emph{stringy point count }of a log pair
$(X,D)$\emph{ }along $C$ by 

\[
\sharp_{\st}(X,D)_{C}:=q^{d}\cdot\mu_{X,D}(X(\cO_{K})_{C}).
\]
When $C=X_{k}$, we omit the subscript $C$ and write $\sharp_{\st}(X,D)$.
When $C$ consists of a single $k$-point $x$, we write the subscript
simply as $x$.
\end{defn}
We obviously have
\[
\sharp_{\st}(X,D)=\sum_{x\in X(k)}\sharp_{\st}(X,D)_{x},
\]
showing that $\sharp_{\st}(X,D)$ is the count of $k$-points with
weights 
\[
\sharp_{\st}(X,D)_{x}.
\]
If $X$ is a smooth variety, identified with the log pair $(X,0)$,
then 
\[
\sharp_{\st}X=\sharp X(k).
\]

\begin{cor}
\label{cor:modification}Let $f:(Y,E)\to(X,D)$ be a crepant proper
birational morphism of log pairs and $C\subset X_{k}$ a constructible
subset. We have
\[
\sharp_{\st}(Y,E)_{f^{-1}(C)}=\sharp_{\st}(X,D)_{C}.
\]
\end{cor}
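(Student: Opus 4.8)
The plan is to reduce the statement to the change-of-variables formula of Theorem \ref{thm:change-vars} applied with the trivial group $G=1$. Since $f$ is proper birational between equi-dimensional $\cO_K$-varieties, $X$ and $Y$ have the same relative dimension $d$, so by the definition of stringy point count it suffices to prove the equality of measures
\[
\mu_{Y,E}\bigl(Y(\cO_K)_{f^{-1}(C)}\bigr)=\mu_{X,D}\bigl(X(\cO_K)_C\bigr).
\]
Taking $G=1$ in Theorem \ref{thm:change-vars} (the hypotheses are then trivially met, as $Y/G=Y\to X$ is birational by assumption) yields $\mu_{Y,E}(A)=\mu_{X,D}(f(A))$ for every open $A\subset Y(\cO_K)$, and by regularity of these $p$-adic measures this extends to all measurable subsets. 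Thus the first step is to record that $f_{*}\mu_{Y,E}=\mu_{X,D}$, so that the whole problem becomes one of matching the relevant measurable sets under $f$.

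Next I would analyze the map on integral points. Let $X^{\mathrm{iso}}\subset X$ be the dense open locus over which $f$ is an isomorphism and $Z:=X\setminus X^{\mathrm{iso}}$, a closed subset of positive codimension; put $W:=f^{-1}(Z)\subset Y$. By Lemma \ref{lem:measure zero 2} the sets $Z(\cO_K)$ and $W(\cO_K)$ are null for $\mu_{X,D}$ and $\mu_{Y,E}$ respectively, as are $X(\cO_K)\setminus X(\cO_K)^{\circ}$ and $Y(\cO_K)\setminus Y(\cO_K)^{\circ}$ by definition of the measures. For $\delta\in X(\cO_K)^{\circ}$ whose generic point lands in $X^{\mathrm{iso}}_K$, the valuative criterion of properness together with the birationality of $f$ produces a unique lift $\gamma\in Y(\cO_K)^{\circ}$; a point whose generic point lies in $Z_K$ must factor through the closed set $Z$ and hence lies in the null set $Z(\cO_K)$. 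Therefore $f$ restricts to a bijection between $Y(\cO_K)^{\circ}$ and $X(\cO_K)^{\circ}$ modulo null sets.

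It then remains to match the two prescribed sets. The reduction map commutes with $f$: if $\gamma$ sends the closed point of $\Spec\cO_K$ to $y\in Y_k$, then $f\circ\gamma$ sends it to $f_k(y)\in X_k$, and $y\in f^{-1}(C)\iff f_k(y)\in C$. Hence $f\bigl(Y(\cO_K)_{f^{-1}(C)}\bigr)=X(\cO_K)_C$ up to the null sets above. Combining this with $f_{*}\mu_{Y,E}=\mu_{X,D}$ and the bijectivity modulo null sets gives the desired equality of measures, and multiplying by $q^{d}$ yields the corollary.

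The main obstacle, and the point deserving the most care, is the claim that $f$ is a bijection on integral points \emph{modulo measure-zero sets}: one must verify both that almost every $\cO_K$-point of $X$ lifts (existence via properness, for points whose generic point avoids the codimension $\ge 1$ locus $Z$, which is null by Lemma \ref{lem:measure zero 2}) and that two distinct lifts cannot occur off a null set (uniqueness from birationality and normality, using that a point outside $Z(\cO_K)$ has generic point in $X^{\mathrm{iso}}_K$). The measure-theoretic extension of Theorem \ref{thm:change-vars} from open subsets to the constructible-type sets $Y(\cO_K)_{f^{-1}(C)}$ is routine given regularity of the measures, but should be acknowledged.
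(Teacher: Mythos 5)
Your proposal is correct and follows essentially the same route as the paper: Theorem \ref{thm:change-vars} with $G=1$ applied to $A=Y(\cO_K)_{f^{-1}(C)}$, with the valuative criterion of properness (plus Lemma \ref{lem:measure zero 2} for the exceptional/non-isomorphism loci) identifying $f(A)$ with $X(\cO_K)_C$ modulo measure-zero sets. The paper's proof is just a terser version of yours; your extra care in spelling out the null-set bookkeeping and the bijection off the exceptional locus is exactly what the paper leaves implicit.
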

\begin{proof}
We apply Theorem \ref{thm:change-vars} to the case where $G=1$ and
$A=Y(\cO_{K})_{f^{-1}(C)}$. From the valuative criterion for properness,
$f(A)$ coincides with $X(\cO_{K})_{C}$ modulo measure zero subsets,
and the corollary follows.
\end{proof}

\subsection{\label{sub:explicit formula}}

We now give an explicit formula for stringy point counts under a certain
assumption, an analogue of Denef's formula \cite[Theorem 3.1]{MR919001}
and the definition of the stringy $E$-function \cite{MR1672108}.
Although we do not use it in the rest of the paper, the formula is
useful for applications. Firstly we suppose that $X$ is regular.
We also suppose that $D$ is simple normal crossing in the following
sense: if we write $D=\sum a_{i}D_{i}$ with $D_{i}$ prime divisors
and $a_{i}\ne0$, then
\begin{enumerate}
\item for every $i$ and every $k$-point $x\in D_{i}$ where $X$ is $\cO_{K}$-smooth,
the completion of $D_{i}$ is irreducible, and
\item for every $k$-point $x\in X$ where $X$ is $\cO_{K}$-smooth, there
exists a regular system of parameters $x_{0}=\varpi,x_{1},\dots,x_{d}$
on a neighborhood $U$ of $x$ with $\varpi$ a uniformizer of $K$
such that the support of $D$ is defined by a product $\prod_{j=1}^{m}x_{i_{j}}$
$(0\le i_{1}<\cdots<i_{m}\le d)$. 
\end{enumerate}
We then rewrite $D$ as

\[
D=\sum_{h=1}^{l}a_{h}A_{h}+\sum_{i=1}^{m}b_{i}B_{i}+\sum_{j=1}^{n}c_{j}C_{j}
\]
such that
\begin{enumerate}
\item for every $h$, $a_{h}\ne0$, $A_{h}\subset X_{k}$ and $X$ is $\cO_{K}$-smooth
at the generic point of $A_{h}$,
\item for every $i$, $b_{i}\ne0$, $B_{i}\subset X_{k}$ and $X$ is \emph{not}
$\cO_{K}$-smooth at the generic point of $B_{i}$, and
\item for every $j$, $c_{j}\ne0$ and $C_{j}$ dominates $\Spec\cO_{K}$. 
\end{enumerate}
Such a decomposition of $D$ is unique. For each $h$, we let $A_{h}^{\circ}$
be the locus in $A_{h}$ where $X$ is $\cO_{K}$-smooth. For each
subset $J\subset\{1,\dots,n\}$, we put
\[
C_{J}^{\circ}:=\left(\bigcap_{j\in J}C_{j}\right)\setminus\left(\bigcup_{j\in\{1,\dots,n\}\setminus J}C_{j}\right).
\]
Let $X_{\sm}$ be the $\cO_{K}$-smooth locus of $X$.
\begin{prop}
The stringy point count $\sharp_{\st}(X,D)_{C}$ is finite if and
only if $c_{j}<1$ for every $j$ such that $(C_{j}\cap C\cap X_{\sm})(k)\ne\emptyset$.
If these equivalent conditions hold, we have
\[
\sharp_{\st}(X,D)_{C}=\sum_{h=1}^{l}q^{a_{h}}\sum_{J\subset\{1,\dots,n\}}\sharp(C\cap A_{h}^{\circ}\cap C_{J}^{\circ})(k)\prod_{j\in J}\frac{q-1}{q^{1-c_{j}}-1}.
\]
\end{prop}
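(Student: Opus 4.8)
The plan is to compute the $p$-adic integral defining $\mu_{X,D}$ by localizing at each reduction point and reducing to elementary one-variable integrals. First I would partition $X(\cO_K)_C$ according to the $k$-point $x$ into which an $\cO_K$-point reduces, so that $\sharp_{\st}(X,D)_C=\sum_x\sharp_{\st}(X,D)_x$, the sum being over the $k$-points $x$ of $X$ lying in $C$. The first key observation is that, because $X$ is regular, \emph{no} $\cO_K$-point reduces to a $k$-point at which $X$ fails to be $\cO_K$-smooth: at such a point $x$ one has $\varpi\in\fm_x^2$, so for any $\cO_K$-morphism $\Spec\cO_K\to X$ reducing to $x$ the element $\varpi$ would have to lie in $\fm_K^2$, contradicting that $\varpi$ is a uniformizer. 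This disposes of the components $B_i$ at one stroke — nothing reduces onto them — and reduces the problem to evaluating $\sharp_{\st}(X,D)_x$ for $x\in(C\cap X_{\sm})(k)$.

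Next I would perform the local computation at a fixed smooth $k$-point $x$. By the simple normal crossing hypothesis there is a regular system of parameters $x_0=\varpi,x_1,\dots,x_d$ near $x$ in which $\mathrm{Supp}(D)$ is a product of coordinates: here $x_0=\varpi$ cuts out the local component of $X_k$ through $x$, and the horizontal components $C_j$ through $x$, indexed by $j\in J(x)$, are cut out by relative coordinates $x_{\tau(j)}$. The relative coordinates identify $X(\cO_K)_x$ with $\fm_K^d$. Because $\omega_{X/\cO_K}$ is the \emph{relative} canonical sheaf, it is generated near $x$ by $(dx_1\wedge\cdots\wedge dx_d)^{\otimes r}$ and the special fiber does not enter it; hence a local generator of $\cI=\cO_X(r(K_X+D))$ is
\[
\omega=\varpi^{-ra}\Big(\prod_{j\in J(x)}x_{\tau(j)}^{-rc_j}\Big)(dx_1\wedge\cdots\wedge dx_d)^{\otimes r},
\]
where $a$ is the coefficient in $D$ of the special-fiber component through $x$. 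Passing to $|\,\cdot\,|^{1/r}$ and using $|\varpi|=q^{-1}$ turns the $\varpi$-factor into the \emph{constant} $q^{a}$ over all of $X(\cO_K)_x$, leaving only the horizontal factors $\prod_{j}|x_{\tau(j)}|^{-c_j}$ to integrate.

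It then remains to integrate over $\fm_K^d$. By the Fubini–Tonelli theorem the integral factors coordinate by coordinate: each of the $d-|J(x)|$ free coordinates contributes $\mu_K(\fm_K)=q^{-1}$, while each $C_j$-coordinate contributes $\int_{\fm_K}|x|^{-c_j}\,d\mu_K$, which by summing a geometric series over valuations equals $\frac{q-1}{q(q^{1-c_j}-1)}$ when $c_j<1$ and diverges when $c_j\ge1$. The free factors $q^{-(d-|J(x)|)}$ and the hidden factors $q^{-|J(x)|}$ from the $C_j$-integrals together give $q^{-d}$, which cancels against the normalizing $q^d$; thus $\sharp_{\st}(X,D)_x=q^{a}\prod_{j\in J(x)}\frac{q-1}{q^{1-c_j}-1}$. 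Summing over $x$ and grouping the points by the stratum $A_h^\circ\cap C_J^\circ$ on which they lie — where $a=a_h$ and $J(x)=J$ are constant, and whose cardinality is $\sharp(C\cap A_h^\circ\cap C_J^\circ)(k)$ — yields the asserted formula. The finiteness statement drops out of the same computation: the total is a finite sum of nonnegative products, so it is finite exactly when every geometric series that actually occurs converges, i.e. when $c_j<1$ for every $j$ with $(C_j\cap C\cap X_{\sm})(k)\ne\emptyset$.

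The steps requiring the most care — and the main obstacle — are the two bookkeeping points on which the exact shape of the answer depends: keeping the relative canonical sheaf $\omega_{X/\cO_K}$ rigorously separate from the vertical part of $D$, so that the special-fiber component yields the clean constant $q^{a}$ rather than a divergent integral, and tracking the powers of $q$ through the $q^d$ normalization so that the horizontal contributions collapse precisely to $\frac{q-1}{q^{1-c_j}-1}$. The regularity argument excluding reductions into the $B_i$ is short but indispensable, and the elementary convergence analysis of $\int_{\fm_K}|x|^{-c}\,d\mu_K$ is exactly what converts the local computation into the sharp finiteness criterion.
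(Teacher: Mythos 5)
Your proposal is correct and follows essentially the same route as the paper's proof: exclude reductions to non-smooth $k$-points, compute $\sharp_{\st}(X,D)_{x}$ at each smooth point in local coordinates where the generator of $\cO_{X}(r(K_{X}+D))$ is $\varpi^{-ra}\prod x_{j}^{-rc_{j}}(dx_{1}\wedge\cdots\wedge dx_{d})^{\otimes r}$, factor by Fubini into one-variable integrals $\int_{\fm_{K}}|x|^{-c}\,d\mu_{K}$ evaluated by a geometric series, and regroup the points by the strata $A_{h}^{\circ}\cap C_{J}^{\circ}$. The only difference is cosmetic: you supply the short regularity argument ($\varpi\in\fm_{x}^{2}$ at a non-smooth point) that the paper asserts without proof.
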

\begin{proof}
The proof presented here follows the one of \cite[Proposition 3.4]{MR2098399}.
There is no $\cO_{K}$-point passing through a $k$-point where $X$
is not $\cO_{K}$-smooth. Therefore 
\[
\sharp_{\st}(X,D)_{C}=\sum_{x\in(C\cap X_{\sm})(k)}\sharp_{\st}(X,D)_{x}.
\]
We fix $x\in(C\cap X_{\sm})(k)$ and suppose that $C_{1},\dots,C_{\lambda}$
are those prime divisors among $C_{1},\dots,C_{n}$ containing $x$.
Let $a$ be $a_{h}$ if $A_{h}$ contains $x$, and zero if none of
$A_{h}$ contains $x$. To show the proposition, it suffices to show:
\begin{claim*}
If $c_{j}\ge1$ for some $j$ with $1\le j\le\lambda$, $\sharp_{\st}(X,D)_{x}=\infty$,
and otherwise,
\[
\sharp_{\st}(X,D)_{x}=q^{a}\cdot\prod_{j=1}^{\lambda}\frac{q-1}{q^{1-c_{j}}-1}.
\]

\end{claim*}
To see this, using local coordinates $x_{0}=\varpi,x_{1},\dots,x_{d}$,
we suppose that $A_{h}$ containing $x$ (if any) is defined by $x_{0}$
and for $1\le i\le\lambda$, $C_{i}$ is defined by $x_{i}$. Then,
for an integer $r>0$, 
\begin{align*}
\sharp_{\st}(X,D)_{x} & =q^{d}\int_{X(\cO_{K})_{x}}|\varpi^{-ra}x_{1}^{-rc_{1}}\cdots x_{\lambda}^{-rc_{\lambda}}|^{1/r}dx_{1}\wedge\cdots\wedge dx_{d}\\
 & =q^{a}\cdot\prod_{j=1}^{d}q\int_{\fm_{K}}|x|^{-c_{j}}\,dx,
\end{align*}
with $c_{j}:=0$ for $j>\lambda$. For any $c\in\RR$, we have
\begin{align*}
\int_{\fm_{K}}|x|^{-c}\,dx & =\sum_{i=1}^{\infty}q^{ic}\cdot\mu_{K}(\fm_{K}^{i}\setminus\fm_{K}^{i+1})\\
 & =\sum_{i=1}^{\infty}q^{ic}\cdot(q^{-i}-q^{-i-1})\\
 & =\begin{cases}
q^{-1}\cdot\frac{q-1}{q^{1-c}-1} & (c<1)\\
\infty & (c\ge1).
\end{cases}
\end{align*}
This shows the claim and the proposition.
\end{proof}

\section{Group actions\label{sec:Group-actions}}

In this section, we consider a $G$-cover of varieties $V\to X=V/G$
with $G$ a finite group and show a correspondence of $\cO_{K}$-points
of $X$ and equivariant $\cO_{M}$-points with $M$ $G$-étale $K$-algebras.
From now on, $G$ denotes a finite group.

\subsection{\label{sub:G-alg}}
\begin{defn}
A \emph{$G$-étale $K$-algebra }means a finite $K$-algebra $M$
of degree $\sharp G$ endowed with a (right) $G$-action such that
the subset of $G$-invariant elements, $M^{G}$, is identical to $K$.
An \emph{isomorphism }$M\to N$ of $G$-étale $K$-algebras is a $K$-algebra
isomorphism compatible with the given $G$-actions on $M$ and $N$.
We denote the set of representatives of isomorphism classes of $G$-étale
$K$-algebras by $\GEt$. We denote the automorphism group of a $G$-étale
$K$-algebra $M$ by $\Aut^{G}(M/K)$.
\end{defn}
Let $M\in\GEt$. There exists a field extension $L$ of $K$ such
that $M$ is isomorphic to the product $L^{c}$ of $c$ copies of
$L$ as an $K$-algebra for some positive integer $c$. Geometrically
we can write 
\[
\Spec M=\overset{c}{\overbrace{\Spec L\sqcup\cdots\sqcup\Spec L}}.
\]
Let $H\subset G$ be the stabilizer of one connected component of
$\Spec M$. The subgroup $H$ depends on the choice of the chosen
component, but is unique up to conjugation in $G$. The automorphism
group $\Aut^{G}(M/K)$ is isomorphic to $C_{G}(H)^{\op}$, the opposite
group of the centralizer $H$ (for instance, see \cite{Yasuda:2014fk2}).
\begin{rem}
$G$-étale $K$-algebras $M$ correspond to $G$-conjugacy classes
of continuous homomorphisms $\rho:\Gal(K^{\sep}/K)\to G$ with $K^{\sep}$
a separable closure of $K$. The stabilizer $H$ of a connected component
of $\Spec M$ coincides with the image of the corresponding map $\rho$
up to conjugation. Since the Galois group of a finite Galois extension
of a local field is always solvable (see \cite[page 68]{MR554237}),
if $G$ is not solvable, then every $G$-étale $K$-algebra $M$ is
not a field. 
\end{rem}

\subsection{\label{sub:action}}

Let $V$ be an $\cO_{K}$-variety endowed with a faithful $G$-action
and let $X:=V/G$ be the quotient variety. For $M\in\GEt$, we let
$V(\cO_{M})^{G}$ be the set of $G$-equivariant $\cO_{M}$-points
of $V$, that is, $G$-equivariant $\cO_{K}$-morphisms $\Spec\cO_{M}\to V$.
A $G$-equivariant $\cO_{M}$-point $\Spec\cO_{M}\to V$ induces a
natural morphism between the quotients of the source and target: $\Spec\cO_{K}\to X$.
This defines a map
\[
V(\cO_{M})^{G}\to X(\cO_{K}).
\]

Let $H\subset G$ be the stabilizer of a connected component of $\Spec M$
as above. The restriction of the $G$-action on $V$ to $C_{G}(H)$
induces a (left) $C_{G}(H)$-action on $V(\cO_{M})^{G}$; $g\in C_{G}(H)$
sends a $G$-equivariant point $\alpha:\Spec\cO_{M}\to V$ to $g\circ\alpha$.
Identifying $\Aut^{G}(M/K)$ with $C_{G}(H)^{\op}$, this action is
identical to the left $\Aut^{G}(M/K)^{\op}$-action corresponding
to the right $\Aut^{G}(M/K)$-action induced from the $\Aut^{G}(M/K)$-action
on $\Spec\cO_{M}$. The map $V(\cO_{M})^{G}\to X(\cO_{K})$ factors
through $V(\cO_{M})^{G}/C_{G}(H)$.
\begin{defn}
Let $X(\cO_{K})^{\natural}$ be the set of $\cO_{K}$-points $\Spec\cO_{K}\to X$
sending the generic point into the unramified locus of $V\to X$ in
$X$, and let $V(\cO_{M})^{G,\natural}$ be the set of $G$-equivariant
$\cO_{M}$-points sending the generic points into the unramified locus
of the same morphism in $V$. 
\end{defn}
The map $V(\cO_{M})^{G}/C_{G}(H)\to X(\cO_{K})$ restricts to $V(\cO_{M})^{G,\natural}/C_{G}(H)\to X(\cO_{K})^{\natural}$. 
\begin{prop}
\label{prop:general point corr}The map
\[
\bigsqcup_{M\in\GEt}V(\cO_{M})^{G,\natural}/C_{G}(H)\to X(\cO_{K})^{\natural}
\]
is bijective. \end{prop}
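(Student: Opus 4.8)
The plan is to construct an explicit inverse to the displayed map. Write $V^{\circ}\subset V$ and $X^{\circ}\subset X$ for the open loci appearing in the definition of $\natural$, so that $V^{\circ}$ is the preimage of $X^{\circ}$, the $G$-action on $V^{\circ}$ is free, and $V^{\circ}\to X^{\circ}$ is a finite étale morphism with $X^{\circ}=V^{\circ}/G$; in particular $V^{\circ}\to X^{\circ}$ is a $G$-torsor. I would first record two standing facts: the morphism $V\to X$, being the quotient by a finite group, is finite, hence proper and separated; and $\cO_{M}^{G}=\cO_{K}$ for every $M\in\GEt$, so that a $G$-equivariant morphism $\Spec\cO_{M}\to V$ genuinely descends to an $\cO_{K}$-point of $X=V/G$. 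This last remark shows the displayed map is well defined and, as already noted in the text, factors through the $C_{G}(H)$-action.

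The heart of the argument is the generic-fibre statement. Given $x\in X(\cO_{K})^{\natural}$ with generic fibre $x_{K}\colon\Spec K\to X^{\circ}$, I would pull back the torsor $V^{\circ}\to X^{\circ}$ along $x_{K}$ to obtain a $G$-torsor $T\to\Spec K$. Such a torsor is the same datum as a $G$-étale $K$-algebra $M$ with an isomorphism $T\cong\Spec M$, and $M$ is determined up to isomorphism by $x_{K}$; the projection $T\to V^{\circ}\hookrightarrow V$ is then a distinguished $G$-equivariant $M$-point $\beta_{K}$ lying over $x_{K}$. Conversely, any $G$-equivariant $M'$-point $\alpha_{K}\colon\Spec M'\to V$ over $x_{K}$ that lands in $V^{\circ}$ factors, by the universal property of the fibre product, through $T$ via a $G$-equivariant $K$-morphism $\Spec M'\to T=\Spec M$; since a morphism of $G$-torsors over $\Spec K$ is automatically an isomorphism, this identifies $M'\cong M$ and exhibits $\alpha_{K}$ as $\beta_{K}$ composed with an element of $\Aut^{G}(M/K)\cong C_{G}(H)^{\op}$. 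Thus, up to isomorphism of the algebra and up to the $C_{G}(H)$-action, $\beta_{K}$ is the unique such generic point.

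It remains to integralise. For each field factor $L$ of $M$ the ring $\cO_{L}$ is a discrete valuation ring with fraction field $L$, and I would apply the valuative criterion of properness to the proper morphism $V\to X$: the generic map $\Spec L\to V$ and the composite $\Spec\cO_{L}\to\Spec\cO_{K}\xrightarrow{x}X$ fit into a commutative square admitting a unique diagonal filling $\Spec\cO_{L}\to V$. Assembling these over the components of $\Spec\cO_{M}$ yields $\beta\colon\Spec\cO_{M}\to V$, and uniqueness of the fillings forces $\beta$ to be $G$-equivariant; as $\beta_{K}$ lands in $V^{\circ}$, we get $\beta\in V(\cO_{M})^{G,\natural}$, and I set $\Psi(x):=[\beta]$. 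Because $V$ is separated, $\Spec\cO_{M}$ is reduced, and its generic fibre $\Spec M$ is schematically dense (as $\cO_{M}$ is $\cO_{K}$-flat), a $G$-equivariant $\cO_{M}$-point is determined by its generic fibre; combining this with the generic-fibre bijection of the previous paragraph shows that $\Psi$ and the displayed map are mutually inverse.

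The step I expect to be most delicate is the bookkeeping of the $C_{G}(H)$-action in the generic-fibre statement, particularly when $M$ is not a field: one must verify that $\Aut^{G}(M/K)\cong C_{G}(H)^{\op}$ acts simply transitively on the $G$-equivariant identifications of $T$ with $\Spec M$, so that quotienting by $C_{G}(H)$ collapses exactly this ambiguity and nothing more, and that the choice of connected component (hence of $H$ within its conjugacy class) does not alter the resulting class. By contrast, the valuative-criterion extension and the density argument are routine once properness and separatedness of $V\to X$ are in hand.
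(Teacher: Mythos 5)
Your proof is correct, and its core is the same as the paper's: everything hinges on the fact that a morphism of étale $G$-torsors over $\Spec K$ is automatically an isomorphism, so that the pullback of $V^{\circ}\to X^{\circ}$ along $x_{K}$ determines the algebra $M$ and the generic point up to exactly the $\Aut^{G}(M/K)\cong C_{G}(H)^{\op}$ ambiguity. Where you diverge is the integralization step. The paper does this in one stroke: for $\beta\in X(\cO_{K})^{\natural}$ it forms $\Spec\cO_{K}\times_{\beta,X,\psi}V$ and takes its normalization, which is $\Spec\cO_{M}$ because $\cO_{M}$ is precisely the integral closure of $\cO_{K}$ in $M$; composing with the projection to $V$ reconstructs the equivariant point, and this single canonical construction delivers injectivity and surjectivity simultaneously. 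You instead extend the generic point component by component via the valuative criterion of properness for the finite morphism $V\to X$, then recover uniqueness and $G$-equivariance from separatedness together with schematic density of the generic fibre. Both routes are valid; the normalization argument is shorter and choice-free (no assembly over components, no separate equivariance check), while yours is more elementary and makes explicit the bookkeeping the paper leaves implicit. The point you flag as delicate --- that quotienting by $C_{G}(H)$ absorbs exactly the choice of $G$-equivariant identification $T\cong\Spec M$ and nothing more --- is genuine but is already settled in the paper before the proposition, where the post-composition $C_{G}(H)$-action on $V(\cO_{M})^{G}$ is identified with the pre-composition $\Aut^{G}(M/K)$-action coming from $\Spec\cO_{M}$ (and simple transitivity of automorphisms on torsor trivializations is standard); one also needs the small observation that an automorphism of $M$ preserves $\cO_{M}$, so the generic-fibre comparison propagates to integral points.
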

\begin{proof}
We first show the injectivity. Let $\alpha:\Spec\cO_{M}\to V$ be
a $G$-equivariant point in $V(\cO_{M})^{\natural}$ and $\beta:\Spec\cO_{K}\to X$
its image in $X(\cO_{K})^{\natural}$. Then the class of $\alpha$
modulo the $C_{G}(H)$-action is reconstructed from $\beta$ as the
normalization of $\Spec\cO_{K}\times_{\beta,X,\psi}V$ with $\psi$
the quotient morphism $V\to X$. Indeed the induced morphism $\Spec M\to\Spec K\times_{\beta,X,\psi}V$
is a morphism of étale $G$-torsors over $\Spec K$, hence is an isomorphism.
This shows the injectivity.

Given a point $\beta:\Spec\cO_{K}\to X$ in $X(\cO_{K})^{\natural}$,
the normalization of $\Spec\cO_{K}\times_{\beta,X,\psi}V$ is the
spectrum of $\cO_{M}$ for a $G$-étale $K$-algebra $M$. This shows
the surjectivity.
\end{proof}

\section{Untwisting\label{sec:Untwisting}}

The key for the formulation and the proof of our main results is \emph{untwisting},
which makes a correspondence between a set of equivariant $\cO_{M}$-points
of a $G$-variety $V$ and a set of $\cO_{K}$-points of another variety
$V^{|M|}$ constructed by twisting $V$ somehow. At the cost of the
twist of $V$, the study of equivariant points (twisted arcs) reduces
to the one of ordinary points (ordinary arcs).

\subsection{\label{sub:untwist1}}

We fix an $\cO_{K}$-linear faithful $G$-action on an affine space
\[
\AA_{\cO_{K},\bx}^{n}:=\Spec\cO_{K}[\bx]=\Spec\cO_{K}[x_{1},\dots,x_{n}].
\]
Let $\cO_{K}[\bx]_{1}$ be the linear part of $\cO_{K}[\bx]$. We
introduce a notion playing the central role in the untwisting technique.
\begin{defn}
We define the \emph{tuning module $\Xi_{M}$ }by 
\[
\Xi_{M}:=\{\phi\in\Hom_{\cO_{K}}(\cO_{K}[\bx]_{1},\cO_{M})\mid\forall g\in G,\,\phi\circ g=g\circ\phi\}.
\]

\end{defn}
The module $\Xi_{M}$ is actually identified with $\AA_{\cO_{K},\bx}^{n}(\cO_{M})^{G}$
by the map 
\[
\AA_{\cO_{K},\bx}^{n}(\cO_{M})^{G}\to\Xi_{M},\,\gamma\mapsto\gamma^{*}|_{\cO_{K}[\bx]_{1}}.
\]
It turns out that the tuning module $\Xi_{M}$ is a free $\cO_{K}$-submodule
of $\Hom_{\cO_{K}}(\cO_{K}[\bx]_{1},\cO_{M})=\AA_{\cO_{K},\bx}^{n}(\cO_{M})$
of rank $n$ \cite{Yasuda:2013fk,Wood-Yasuda-I}. 
\begin{rem}
\label{rem: left or right}Our definition of the tuning module follows
the one in \cite{Yasuda:2014fk2}. Noting that throughout the literature
\cite{Wood-Yasuda-I,Yasuda:2013fk,Yasuda:2014fk2}, a finite group
always acts on an affine space $\AA_{\cO_{K},\bx}^{n}$ from left,
if we think that the tuning module is associated to $M$ and the $G$-variety
$\AA_{\cO_{K},\bx}^{n}$ (rather than its coordinate ring) and if
$\AA_{\cO_{K},\bx}^{n}(\cO_{M})$ is identified with $\cO_{M}^{\oplus n}$,
then our definition of $\Xi_{M}$ coincides with the tuning submodule
considered in \cite{Yasuda:2013fk,Wood-Yasuda-I}. On the other hand,
the coordinate ring $\cO_{K}[\bx]$ has a right or left action, depending
on the paper, and one has to be careful about the caused notational
difference.
\end{rem}
We fix a basis $\phi_{1},\dots,\phi_{n}\in\Xi_{M}$ and let $y_{1},\dots,y_{n}$
be its dual basis so that 
\[
\Hom_{\cO_{K}}(\Xi_{M},\cO_{K})=\bigoplus_{j}\cO_{K}\cdot y_{j}\text{ and }\Hom_{\cO_{K}}(\Xi_{M},\cO_{M})=\bigoplus_{j}\cO_{M}\cdot y_{j}.
\]
We think of these modules as the linear parts of the polynomial rings
$\cO_{K}[\by]=\cO_{K}[y_{1},\dots,y_{n}]$ and $\cO_{M}[\by]=\cO_{M}[y_{1},\dots,y_{n}]$.
We put
\[
\AA_{\cO_{K},\by}^{n}:=\Spec\cO_{K}[\by]\text{ and }\AA_{\cO_{M},\by}^{n}:=\Spec\cO_{M}[\by].
\]

\begin{defn}
\label{def:u}We define an $\cO_{K}$-algebra morphism $u^{*}:\cO_{K}[\bx]\to\cO_{M}[\by]$
by
\[
u^{*}(x_{i})=\sum_{j=1}^{n}\phi_{j}(x_{i})y_{j}
\]
and the corresponding morphism of schemes
\[
u:\AA_{\cO_{M},\by}^{n}\to\AA_{\cO_{K},\bx}^{n}.
\]

\end{defn}
The linear part of $u^{*}$ is identical to the canonical map
\begin{align*}
\cO_{K}[\bx]_{1} & \to\Hom_{\cO_{K}}(\Xi_{M},\cO_{M})\\
f & \mapsto(\phi\mapsto\phi(f)),
\end{align*}
which gives an intrinsic description of $u$. This is useful in order
to show that some derived maps are equivariant.
\begin{lem}
\label{lem:u etale}The restriction of $u$, $\AA_{M,\by}^{n}\to\AA_{K,\bx}^{n}$,
is étale.\end{lem}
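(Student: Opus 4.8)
The plan is to factor $u$ as a linear coordinate change over $\cO_M$ followed by the coefficient base‑change morphism, and then to analyse the two factors separately on the generic fibre. Concretely, I would write $u=w\circ v$, where $w\colon\AA_{\cO_M,\bx}^{n}\to\AA_{\cO_K,\bx}^{n}$ is the morphism base‑changing the coefficient ring from $\cO_K$ to $\cO_M$ (so that $w^{*}(x_i)=x_i$), and $v\colon\AA_{\cO_M,\by}^{n}\to\AA_{\cO_M,\bx}^{n}$ is the $\cO_M$‑morphism with $v^{*}(x_i)=\sum_{j}\phi_j(x_i)y_j$; the identity $u^{*}=v^{*}\circ w^{*}$ is immediate from Definition \ref{def:u}. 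On the generic fibre, $w_K\colon\Spec M[\bx]\to\Spec K[\bx]$ is the base change of $\Spec M\to\Spec K$ along $\AA_K^{n}\to\Spec K$; since $M$ is a $G$‑étale, hence étale, $K$‑algebra, $\Spec M\to\Spec K$ is étale, and étaleness is stable under base change, so $w_K$ is étale. Because $u_K=w_K\circ v_K$ and compositions of étale morphisms are étale, everything will reduce to the factor $v_K$.

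So the next step is to show that $v_K\colon\Spec M[\by]\to\Spec M[\bx]$ is étale. As $v_K$ is the $M$‑linear change of coordinates whose $n\times n$ coefficient matrix is $A:=(\phi_j(x_i))_{i,j}$ (entries in $\cO_M$), it suffices to prove $\det A\in M^{\times}$. To interpret this determinant I would set $W:=\Hom_{\cO_K}(\cO_K[\bx]_1,\cO_K)\otimes_{\cO_K}K$, the dual $K[G]$‑module, and let $x_i^{\vee}\in W$ be the basis dual to $x_i$. Since taking $G$‑invariants commutes with the flat base change $\cO_K\to K$ (it is a kernel), one gets $\Xi_M\otimes_{\cO_K}K=(W\otimes_K M)^{G}$, and $A$ is exactly the matrix, with respect to the bases $\{\phi_j\}$ and $\{x_i^{\vee}\}$, of the natural $M$‑linear multiplication map
\[
\mu\colon (W\otimes_K M)^{G}\otimes_K M\longrightarrow W\otimes_K M .
\]
Thus the lemma reduces to showing that $\mu$ is an isomorphism.

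The hard part, and really the only substantial point, will be this last claim, which I would establish as an instance of Galois descent along the étale $G$‑torsor $\Spec M\to\Spec K$. The $G$‑étale structure supplies the torsor identity $M\otimes_K M\cong\prod_{g\in G}M$, under which $W\otimes_K M$, equipped with its diagonal $G$‑action, becomes an $M$‑module carrying the canonical descent datum; faithfully flat descent then identifies the invariants $(W\otimes_K M)^{G}$ as a $K$‑form with $\mu$ the associated comparison isomorphism. As a consistency check, $\dim_K(W\otimes_K M)^{G}=n=\rank_{\cO_K}\Xi_M$, using $\dim_K M=\sharp G$. Granting this, $A$ lies in $\GL nM$, so $v_K$ is in fact an isomorphism and $u_K=w_K\circ v_K$ is étale, as required.
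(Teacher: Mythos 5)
Your proof is correct, and its skeleton is the same as the paper's: both factor the generic fibre of $u$ through $\AA_{M,\bx}^{n}$, as an $M$-linear change of coordinates followed by the base change of the étale morphism $\Spec M\to\Spec K$, thereby reducing the lemma to the invertibility over $M$ of the matrix $A=(\phi_j(x_i))_{i,j}$. The genuine difference is how that invertibility is justified. The paper simply asserts that $\phi_1,\dots,\phi_n$ is an $M$-basis of $\Hom_M(M[\bx]_1,M)$ --- a structural fact about the tuning module which it does not reprove but inherits from the references cited in section \ref{sub:untwist1} (the same sources as for the freeness of $\Xi_M$ of rank $n$) --- and then packages the linear factor as the canonical isomorphism of $M[\bx]_1$ onto its double dual $\Hom_M(\Hom_M(M[\bx]_1,M),M)$. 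You instead prove that basis property from scratch, in the equivalent form that the multiplication map $\mu\colon(W\otimes_K M)^G\otimes_K M\to W\otimes_K M$ is an isomorphism, by descent along the étale $G$-torsor $\Spec M\to\Spec K$, using the torsor identity $M\otimes_K M\cong\prod_{g\in G}M$ and the compatibility of invariants with the flat base change $\cO_K\to K$. What your route buys is self-containedness: the only inputs are the definition of $\Xi_M$ and the torsor property of a $G$-étale algebra, and it even re-derives $\dim_K(\Xi_M\otimes_{\cO_K}K)=n$ as a by-product rather than quoting it. What it costs is the appeal to faithfully flat descent for étale algebras that need not be fields --- standard, but heavier machinery than the paper's one-line double-dual identification. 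Both arguments are sound; yours is preferable in a fully self-contained exposition, the paper's in a context where the cited structural facts about $\Xi_M$ are already available.
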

\begin{proof}
The chosen basis $\phi_{1},\dots,\phi_{n}$ of $\Xi_{M}$ is an $M$-basis
of the free $M$-module 
\[
\Hom_{\cO_{K}}(\cO_{K}[\bx]_{1},M)=\Hom_{M}(M[\bx]_{1},M)
\]
and its dual basis $y_{1},\dots,y_{n}$ is naturally regarded as a
basis of 
\[
\Hom_{M}(\Hom_{M}(M[\bx]_{1},M),M).
\]
Therefore the map $K[\bx]_{1}\to M[\by]_{1}$ corresponding to the
morphism of the lemma is identified with 
\[
K[\bx]_{1}\to M[\bx]_{1}\to\Hom_{M}(\Hom_{M}(M[\bx]_{1},M),M),
\]
the composition of the scalar extension and the canonical morphism
to the double dual space. This proves the lemma.
\end{proof}

\subsection{\label{sub:two actions}}
\begin{defn}
The given $G$-action on $\Spec\cO_{M}$ and the trivial $G$-action
on $\AA_{\cO_{K,\by}}^{n}$ defines a $G$-action on the fiber product
$\AA_{\cO_{M},\by}^{n}=\Spec\cO_{M}\times_{\Spec\cO_{K}}\AA_{\cO_{K},\by}^{n}$
over $\cO_{K}$: we call it the \emph{Galois $G$-action.}\end{defn}
\begin{lem}
The map $u$ is equivariant with respect to the given $G$-action
on $\AA_{\cO_{K},\bx}^{n}$ and the Galois $G$-action on $\AA_{\cO_{M},\by}^{n}$. \end{lem}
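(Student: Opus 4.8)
The plan is to check the asserted equivariance after passing to coordinate rings, where it becomes a direct consequence of the defining property of the tuning module $\Xi_{M}$. By the anti-equivalence between affine $\cO_{K}$-schemes and $\cO_{K}$-algebras, the morphism $u$ is $G$-equivariant if and only if, for every $g\in G$, the homomorphism $u^{*}\colon\cO_{K}[\bx]\to\cO_{M}[\by]$ of Definition \ref{def:u} commutes with the ring automorphism $\rho_{g}$ induced by the given linear action on $\cO_{K}[\bx]$ and the ring automorphism $\gamma_{g}$ induced by the Galois action on $\cO_{M}[\by]$; explicitly, the goal is $\gamma_{g}\circ u^{*}=u^{*}\circ\rho_{g}$. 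Since both sides are $\cO_{K}$-algebra homomorphisms and $\cO_{K}[\bx]$ is generated over $\cO_{K}$ by its linear part $\cO_{K}[\bx]_{1}$, I would reduce to verifying this identity on each generator $x_{i}$.

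I would then compute the two sides from the formula $u^{*}(x_{i})=\sum_{j}\phi_{j}(x_{i})\,y_{j}$. The Galois action operates through the $G$-action on the coefficient ring $\cO_{M}$ while fixing every variable $y_{j}$, so $\gamma_{g}(u^{*}(x_{i}))=\sum_{j}(\phi_{j}(x_{i})\cdot g)\,y_{j}$. On the other side, $\rho_{g}(x_{i})$ again lies in $\cO_{K}[\bx]_{1}$, whence $u^{*}(\rho_{g}(x_{i}))=\sum_{j}\phi_{j}(\rho_{g}(x_{i}))\,y_{j}$. Comparing the coefficients of $y_{j}$, the required identity collapses to $\phi_{j}(\rho_{g}(x_{i}))=\phi_{j}(x_{i})\cdot g$, which is precisely the relation $\phi_{j}\circ g=g\circ\phi_{j}$ defining membership of $\phi_{j}$ in $\Xi_{M}$. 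This finishes the verification.

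A cleaner, basis-free route uses the intrinsic description of the linear part of $u^{*}$ recorded after Definition \ref{def:u}, namely the map $\cO_{K}[\bx]_{1}\to\Hom_{\cO_{K}}(\Xi_{M},\cO_{M})$ sending $f$ to the functional $\phi\mapsto\phi(f)$. The evaluation pairing $\cO_{K}[\bx]_{1}\times\Xi_{M}\to\cO_{M}$ is $G$-equivariant by the very definition of $\Xi_{M}$, and the Galois action on $\Hom_{\cO_{K}}(\Xi_{M},\cO_{M})$ is post-composition with the action on $\cO_{M}$; together these make the displayed map equivariant with no reference to the chosen basis, and equivariance of $u^{*}$ follows because $\cO_{K}[\bx]$ is generated by its linear part.

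The computation is short, and I do not expect a genuine obstacle; the only point deserving care is bookkeeping with the left/right action conventions of Section \ref{sec:Convention-and-notation}. Concretely, one must make sure that the Galois action really fixes the $y_{j}$ and acts only on the $\cO_{M}$-coefficients, so that the equivariance built into each $\phi_{j}\in\Xi_{M}$ transfers verbatim to $u^{*}$; this bookkeeping is where the (modest) risk of error lies.
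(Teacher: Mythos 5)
Your proposal is correct and follows essentially the same argument as the paper: reduce equivariance to the generators $x_{i}$ of $\cO_{K}[\bx]$, expand $u^{*}(x_{i}g)=\sum_{j}\phi_{j}(x_{i}g)y_{j}$, and invoke the defining relation $\phi_{j}\circ g=g\circ\phi_{j}$ of $\Xi_{M}$ together with the fact that the Galois action fixes the $y_{j}$ and acts only on $\cO_{M}$-coefficients. The basis-free reformulation you add is a harmless variant of the same idea, already hinted at in the paper's remark following Definition \ref{def:u}.
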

\begin{proof}
For $1\le i\le n$ and $g\in G$, we have 
\begin{align*}
u^{*}(x_{i}g) & =\sum_{j=1}^{n}\phi_{j}(x_{i}g)y_{j}.
\end{align*}
From the definition of $\Xi_{M}$, we have
\[
u^{*}(x_{i}g)=\sum_{j=1}^{n}\phi_{j}(x_{i}g)y_{j}=\sum_{j=1}^{n}(\phi_{j}(x_{i})g)y_{j}=u^{*}(x_{i})g,
\]
and the lemma follows. \end{proof}
\begin{prop}
We have 
\[
u^{*}\left(\cO_{K}[\bx]^{G}\right)\subset\cO_{K}[\by].
\]
\end{prop}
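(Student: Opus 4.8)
The plan is to deduce the inclusion purely formally from the $G$-equivariance of $u$ established in the preceding lemma, combined with a direct computation of the ring of invariants of the Galois action on the target. The point is that $u^*$ intertwines the two group actions, so it must carry invariants to invariants; the only thing left is to recognize what the invariants on the $\by$-side are.

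First I would restate the equivariance of $u$ at the level of coordinate rings. Since $u$ is equivariant with respect to the given $G$-action on $\AA_{\cO_K,\bx}^{n}$ and the Galois $G$-action on $\AA_{\cO_M,\by}^{n}$, and since $u^*$ is an $\cO_K$-algebra morphism while the actions are by algebra automorphisms, we have $u^*(f\cdot g)=u^*(f)\cdot g$ for every $f\in\cO_K[\bx]$ and every $g\in G$, where on the right $g$ acts via the Galois action. Hence if $f\in\cO_K[\bx]^G$, so that $f\cdot g=f$ for all $g$, then $u^*(f)\cdot g=u^*(f\cdot g)=u^*(f)$, meaning $u^*(f)$ is Galois-invariant. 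Thus $u^*\!\left(\cO_K[\bx]^G\right)\subset(\cO_M[\by])^G$, and it suffices to identify $(\cO_M[\by])^G$ with $\cO_K[\by]$.

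Next I would compute these invariants. The Galois action acts only on the coefficient ring $\cO_M$ and fixes each $y_j$, so, writing $\cO_M[\by]=\cO_M\otimes_{\cO_K}\cO_K[\by]$ and expanding an element as $\sum_\alpha c_\alpha\by^\alpha$ with $c_\alpha\in\cO_M$, invariance holds if and only if every coefficient $c_\alpha$ lies in $\cO_M^G$. The claim therefore reduces to the identity $\cO_M^G=\cO_K$. This in turn follows from the definition of a $G$-étale $K$-algebra: a $G$-invariant element of $\cO_M$ lies in $M^G=K$ and is integral over $\cO_K$, hence belongs to $\cO_K$ since $\cO_K$ is integrally closed, while the reverse inclusion $\cO_K\subset\cO_M^G$ is immediate. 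Combining the steps gives $u^*\!\left(\cO_K[\bx]^G\right)\subset(\cO_M[\by])^G=\cO_K[\by]$.

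The argument is essentially formal, so there is no serious obstacle; the equivariance lemma does all the real work. The only place requiring a moment's care is the reduction to $\cO_M^G=\cO_K$, where one must use integral closedness of $\cO_K$ rather than merely the field-level identity $M^G=K$, so that the statement about integer rings rather than the algebras themselves is justified.
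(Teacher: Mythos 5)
Your proof is correct and follows essentially the same route as the paper: use the $G$-equivariance of $u^*$ to send invariants to invariants, then identify $(\cO_M[\by])^G$ with $\cO_K[\by]$. The paper states this identification without comment, whereas you justify it via $\cO_M^G=\cO_K$ using integral closedness of $\cO_K$; this is a worthwhile detail but not a different argument.
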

\begin{proof}
Note that $\cO_{K}[\by]$ is identical to the invariant subring $\cO_{M}[\by]^{G}$
with respect to the Galois $G$-action as above. Since $u^{*}$ is
$G$-equivariant, all $G$-invariant elements of $\cO_{K}[\bx]$ map
into $\cO_{K}[\by]$. 
\end{proof}
Let $\Psi:\AA_{\cO_{K},\bx}^{n}\to\AA_{\cO_{K},\bx}^{n}/G$ be the
quotient morphism and $\Psi^{|M|}:\AA_{\cO_{K},\by}^{n}\to\AA_{\cO_{K},\bx}^{n}/G$
the morphism corresponding to $u^{*}:\cO_{K}[\bx]^{G}\to\cO_{K}[\by]$.
We obtain the following commutative diagram:
\[
\xymatrix{ & \AA_{\cO_{M},\by}^{n}\ar[dl]_{u}\ar[dr]^{\otimes_{\cO_{K}}\cO_{M}}\\
\AA_{\cO_{K},\bx}^{n}\ar[dr]_{\Psi} &  & \AA_{\cO_{K},\by}^{n}\ar[dl]^{\Psi^{|M|}}\\
 & \AA_{\cO_{K},\bx}^{n}/G
}
\]

\subsection{\label{sub:point corresp}}
\begin{defn}
The action of $C_{G}(H)=\Aut(M)^{\op}$ on $\Xi_{M}$ induces $C_{G}(H)$-action
on $\cO_{K}[\by]_{1}=\Hom_{\cO_{K}}(\Xi_{M},\cO_{K})$\emph{ }and\emph{
}$\cO_{M}[\by]_{1}=\Hom_{\cO_{K}}(\Xi_{M},\cO_{M})$, which are $\cO_{K}$-linear
and $\cO_{M}$-linear respectively. In turn, they induce $C_{G}(H)$-actions
on $\AA_{\cO_{K},\by}^{n}$ and $\AA_{\cO_{M},\by}^{n}$: we call
all these actions the \emph{non-Galois action}s. \end{defn}
\begin{lem}

\begin{enumerate}
\item The map $u$ is equivariant with respect to the restriction of the
given $G$-action on $\AA_{\cO_{K},\bx}^{n}$ to $C_{G}(H)$ and the
non-Galois $C_{G}(H)$-action on $\AA_{\cO_{M},\by}^{n}$. 
\item The map $\AA_{\cO_{M},\by}^{n}\to\AA_{\cO_{K},\by}^{n}$ given by
the scalar extension is equivariant with respect to the non-Galois
$C_{G}(H)$-actions.
\end{enumerate}
\end{lem}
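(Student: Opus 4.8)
The plan is to verify both equivariances at the level of the degree-one parts of the coordinate rings, exploiting the intrinsic description of $u$ recorded just after Definition \ref{def:u}: the linear part of $u^{*}$ is the canonical map
\[
\epsilon\colon\cO_{K}[\bx]_{1}\to\Hom_{\cO_{K}}(\Xi_{M},\cO_{M})=\cO_{M}[\by]_{1},\quad\epsilon(r)(\phi)=\phi(r).
\]
Since $u$, the scalar extension, and every action in sight are linear (each is the symmetric-algebra extension of an $\cO_{K}$- or $\cO_{M}$-linear map of the degree-one parts), a morphism is equivariant as soon as its degree-one part is. Both assertions thus reduce to statements about the tautological pairing $\cO_{K}[\bx]_{1}\times\Xi_{M}\to\cO_{M}$, $(r,\phi)\mapsto\phi(r)$, together with the non-Galois $C_{G}(H)$-action, which by the definition preceding the lemma is dual to the $C_{G}(H)$-action on $\Xi_{M}$: it acts through the $\Xi_{M}$-variable alone and fixes the coefficient ring.

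For part (1) I would show that $\epsilon$ intertwines the restriction to $C_{G}(H)$ of the given $G$-action on $\cO_{K}[\bx]_{1}$ with the dual non-Galois action on $\Hom_{\cO_{K}}(\Xi_{M},\cO_{M})$. Unwinding the dual action, this is exactly the invariance of the tautological pairing, $\langle g\cdot r,\,g\cdot\phi\rangle=\langle r,\phi\rangle$ for $g\in C_{G}(H)$, which is precisely the invariance of the canonical pairing between a representation and its dual. The substantive input is the coincidence of the two descriptions of the $C_{G}(H)$-action on $\Xi_{M}\cong\AA^{n}_{\cO_{K},\bx}(\cO_{M})^{G}$ established in \ref{sub:action}. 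Concretely, the defining $G$-equivariance of elements of $\Xi_{M}$ (the relation $\phi\circ g=g\circ\phi$), combined with the identification of \ref{sub:action}, expresses $g\cdot\phi$ as post-composition of $\phi$ by the automorphism $\theta_{g}\in\Aut^{G}(M/K)$ attached to $g$ under $C_{G}(H)\cong\Aut^{G}(M/K)^{\op}$, and yields the compatibility $\theta_{g}(\phi(r))=\phi(r\cdot g)$ relating it to the given action $r\mapsto r\cdot g$ of $g$ on $\cO_{K}[\bx]_{1}$. From this the identity $\epsilon(r\cdot g)=g\cdot\epsilon(r)$ follows in one line.

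Part (2) is then immediate. The scalar-extension morphism $\AA^{n}_{\cO_{M},\by}\to\AA^{n}_{\cO_{K},\by}$ corresponds to the inclusion $\cO_{K}[\by]\hookrightarrow\cO_{M}[\by]$, which on degree-one parts is the inclusion $\Hom_{\cO_{K}}(\Xi_{M},\cO_{K})\hookrightarrow\Hom_{\cO_{K}}(\Xi_{M},\cO_{M})$ induced by $\cO_{K}\hookrightarrow\cO_{M}$. Both non-Galois actions are the dual of one and the same $C_{G}(H)$-action on $\Xi_{M}$ and operate only through the $\Xi_{M}$-variable; since the inclusion $\cO_{K}\hookrightarrow\cO_{M}$ is fixed by $C_{G}(H)$, the inclusion of Hom-modules intertwines the two actions, and equivariance follows at once.

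The main obstacle I anticipate is bookkeeping rather than mathematics: keeping the left/right conventions straight and handling the opposite-group identification $C_{G}(H)\cong\Aut^{G}(M/K)^{\op}$ correctly. In particular one must use that it is the $\Aut^{G}(M/K)$-description of the action (pre-composition on the source $\Spec\cO_{M}$), and \emph{not} a naive post-composition by the image of $g$ in $\Aut(\AA^{n}_{\cO_{K},\bx})$, that actually preserves $\Xi_{M}$ and realizes the non-Galois action. Pinning down this point is exactly where the coincidence of the two actions from \ref{sub:action} is indispensable, and where one must be careful that the pairing comes out genuinely invariant rather than invariant only up to an inverse or a twist.
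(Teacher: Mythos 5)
Your strategy coincides with the paper's: the paper's entire proof is that the linear part of $u^{*}$, namely the evaluation map $\cO_{K}[\bx]_{1}\to\cO_{M}[\by]_{1}=\Hom_{\cO_{K}}(\Xi_{M},\cO_{M})$, $r\mapsto(\phi\mapsto\phi(r))$, is ``clearly'' $C_{G}(H)$-equivariant, and that assertion (2) is trivial; your reduction to degree-one parts and your treatment of (2) are exactly this. The flaw is the one substantive identity you add in order to justify ``clearly'': the compatibility $\theta_{g}(\phi(r))=\phi(r\cdot g)$ for all $r\in\cO_{K}[\bx]_{1}$ and $\phi\in\Xi_{M}$. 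Since every $\phi\in\Xi_{M}$ satisfies $\phi(r\cdot g)=\phi(r)\cdot g$ by definition, your identity asserts that $\theta_{g}$ acts as $m\mapsto m\cdot g$ on the values of $\phi$, equivalently that $\theta_{g}\circ\phi=\phi\circ(\cdot\,g)$. That is impossible in general: $\theta_{g}\circ\phi$ lies in $\Xi_{M}$ by construction, whereas $\phi\circ(\cdot\,g)$ in general does not --- the very point you correctly raise in your final paragraph. Concretely, the map $m\mapsto m\cdot g$ commutes with the (faithful) $G$-action on $M$ only when $g$ is central in $G$, so for non-central $g$ it is not an element of $\Aut^{G}(M/K)$ at all and cannot equal $\theta_{g}$. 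This failure is not a matter of op-group or inverse bookkeeping: for the split algebra $M=\prod_{h\in G}K$ one has $H=1$ and $C_{G}(H)=G$, so as soon as $G$ has trivial center (e.g.\ $G=S_{3}$) the identity fails for every $g\neq 1$, however the isomorphism $C_{G}(H)\cong\Aut^{G}(M/K)^{\op}$ is normalized.

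What is true is only the componentwise statement: $\theta_{g}$ and $m\mapsto m\cdot g$ agree after the projection $\cO_{M}\to\cO_{L}$ onto the distinguished component (the one whose stabilizer $H$ is centralized by $g$); equivalently, $g\cdot\phi=\theta_{g}\circ\phi$ is the unique element of $\Xi_{M}$ whose $\cO_{L}$-component agrees with that of $\phi\circ(\cdot\,g)$, elements of $\Xi_{M}$ being determined by their $\cO_{L}$-components. This is precisely the ``twist'' you flagged as the danger point, and it bites exactly where you placed your one-line conclusion: with the honest $\Aut^{G}(M/K)$-description of the action and the untwisted dual action on $\Hom_{\cO_{K}}(\Xi_{M},\cO_{M})$, the evaluation map intertwines the actions only up to the coefficient automorphism $\theta_{g}\circ(\cdot\,g)^{-1}$ of $\cO_{M}$, which is invisible precisely on everything defined over $\cO_{K}$ (in particular on $\cO_{K}[\by]_{1}=\Hom_{\cO_{K}}(\Xi_{M},\cO_{K})$, which is what matters downstream). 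The paper's proof never asserts your identity: it treats the induced (dual) action as the operative definition of the non-Galois action, under which pairing a linear form against $\Xi_{M}$ is equivariant tautologically, which is why it can say ``clearly.'' To repair your route, run the pairing computation after applying the projection to $\cO_{L}$ --- that is, replace $\Xi_{M}$ by the isomorphic module of $H$-equivariant maps $\cO_{K}[\bx]_{1}\to\cO_{L}$, on which $g\in C_{G}(H)$ genuinely does act by $\phi\mapsto\phi\circ(\cdot\,g)$ (this preserves $H$-equivariance because $g$ commutes with $H$); there the invariance of the tautological pairing, and with it both assertions of the lemma, hold as you intended.
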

\begin{proof}
The natural map 
\[
\cO_{K}[\bx]_{1}\to\cO_{M}[\by]_{1}=\Hom_{\cO_{K}}(\Xi_{M},\cO_{M})
\]
is clearly $C_{G}(H)$-equivariant, and the first assertion of the
lemma follows. The second assertion is trivial.\end{proof}
\begin{defn}
Let $\AA_{\cO_{M},\by}^{n}(\cO_{M})^{G}$ be the set of $G$-equivariant
$\cO_{M}$-morphisms $\Spec\cO_{M}\to\AA_{\cO_{M},\by}^{n}$. The
sets $\AA_{\cO_{M},\by}^{n}(\cO_{M})^{G}$ and $\AA_{\cO_{K},\by}^{n}(\cO_{K})$
both have $C_{G}(H)$-actions by $g\cdot\gamma=g\circ\gamma$: we
call these again \emph{non-Galois}. \end{defn}
\begin{lem}
The map obtained by the scalar extension,
\[
\AA_{\cO_{K},\by}^{n}(\cO_{K})\to\AA_{\cO_{M},\by}^{n}(\cO_{M})^{G},
\]
is bijective and $C_{G}(H)$-equivariant with respect to the non-Galois
actions. \end{lem}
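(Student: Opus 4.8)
The plan is to identify both point sets with $\cO_K^n$ and to recognize the scalar-extension map as one instance of the general base-change bijection $W(\cO_K)\xrightarrow{\sim}(W\otimes_{\cO_K}\cO_M)(\cO_M)^G$, natural in the $\cO_K$-scheme $W$, specialized to $W=\AA_{\cO_K,\by}^{n}$. The genuine content is Galois descent for $\cO_K$-points, which is encoded in the single equality $\cO_M^G=\cO_K$; everything else is unwinding definitions.

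First I would describe the right-hand set explicitly. An $\cO_M$-point of $\AA_{\cO_M,\by}^{n}$ is an $\cO_M$-algebra homomorphism $\gamma^{*}:\cO_M[\by]\to\cO_M$, hence is given freely by the tuple $(b_{j})_{j}:=(\gamma^{*}(y_{j}))_{j}\in\cO_M^{n}$. The Galois action fixes each $y_{j}$ and acts on $\cO_M$ by the given $G$-action, and $\gamma$ is $G$-equivariant precisely when $\gamma^{*}(p\cdot g)=\gamma^{*}(p)\cdot g$ for all $p$; since this is automatic on $\cO_M$ and it suffices to test it on the generators $y_{j}$, equivariance amounts to $b_{j}=b_{j}\cdot g$ for all $g$. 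Because $M^{G}=K$ and $\cO_M$ is integral over $\cO_K$, one has $\cO_M^{G}=\cO_M\cap K=\cO_K$, so the condition reads exactly $b_{j}\in\cO_K$. Thus $\AA_{\cO_M,\by}^{n}(\cO_M)^{G}$ is identified with $\cO_K^{n}$, the same set with which $\AA_{\cO_K,\by}^{n}(\cO_K)$ is identified via $(a_{j})_{j}=(\gamma^{*}(y_{j}))_{j}$. On rings the scalar extension sends $\gamma^{*}$ to $\gamma^{*}\otimes_{\cO_K}\cO_M$, which carries $1\otimes y_{j}$ to $a_{j}\otimes 1=a_{j}\in\cO_K$; under the two identifications it is therefore the identity of $\cO_K^{n}$, so in particular its image lands in the $G$-equivariant locus and it is bijective.

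For the equivariance I would argue by naturality. Both non-Galois actions are induced by the $\cO_K$-linear $C_{G}(H)$-action on $\Xi_M$: the action on $\cO_M[\by]_{1}=\Hom_{\cO_K}(\Xi_M,\cO_M)=\cO_K[\by]_{1}\otimes_{\cO_K}\cO_M$ is the scalar extension of the action on $\cO_K[\by]_{1}=\Hom_{\cO_K}(\Xi_M,\cO_K)$, so the non-Galois automorphism of $\AA_{\cO_M,\by}^{n}$ attached to $g\in C_{G}(H)$ is the base change along $\Spec\cO_M\to\Spec\cO_K$ of the corresponding automorphism of $\AA_{\cO_K,\by}^{n}$. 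Since the point map is $\gamma\mapsto\gamma\otimes_{\cO_K}\cO_M$ and base change is functorial, $(g\circ\gamma)\otimes_{\cO_K}\cO_M=(g\otimes_{\cO_K}\cO_M)\circ(\gamma\otimes_{\cO_K}\cO_M)$, which is exactly the desired $C_{G}(H)$-equivariance for the non-Galois actions $g\cdot\gamma=g\circ\gamma$.

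The main obstacle is purely bookkeeping rather than mathematical depth: one must keep the Galois action (which defines the equivariant target set) and the non-Galois action (for which equivariance is asserted) strictly apart, and track the left/right and opposite-group conventions so that equivariance for the given action on $\Spec\cO_M$ matches the fixed-point description $b_{j}\in\cO_K$. Once the identifications with $\cO_K^{n}$ are set up compatibly, the two claims reduce respectively to the identity map and to functoriality of base change.
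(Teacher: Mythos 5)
Your proof is correct and follows essentially the same route as the paper: bijectivity comes from the observation that a Galois-equivariant $\cO_{M}$-point is determined by the images of the $y_{j}$, which must lie in $\cO_{M}^{G}=\cO_{K}$, and the $C_{G}(H)$-equivariance comes from the fact that the scalar-extension morphism $\AA_{\cO_{M},\by}^{n}\to\AA_{\cO_{K},\by}^{n}$ is equivariant for the non-Galois actions (which you rederive as functoriality of base change). You merely spell out details the paper leaves implicit, such as $\cO_{M}^{G}=\cO_{M}\cap K=\cO_{K}$ and the sufficiency of checking equivariance on the generators $y_{j}$.
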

\begin{proof}
A $G$-equivariant (with respect to the Galois action on $\AA_{\cO_{M},\by}^{n}$)
$\cO_{M}$-point $\gamma$ is determined by $\gamma(y_{j})$, $1\le j\le n$
which must lie in $\cO_{K}$. This shows the bijectivity. That it
is equivariant follows from that $\AA_{\cO_{M},\by}^{n}\to\AA_{\cO_{K},\by}^{n}$
is equivariant.
\end{proof}
If $\gamma\in\AA_{\cO_{M},\by}^{n}(\cO_{M})^{G}$, then $u\circ\gamma\in\AA_{\cO_{K},\bx}^{n}(\cO_{M})^{G}$.
It defines a map
\[
\alpha:\AA_{\cO_{M},\by}^{n}(\cO_{M})^{G}\to\AA_{\cO_{K},\bx}^{n}(\cO_{M})^{G},\,\gamma\mapsto u\circ\gamma.
\]

\begin{lem}
The map $\alpha$ is a $C_{G}(H)$-equivariant bijection with respect
to the non-Galois action on $\AA_{\cO_{M},\by}^{n}(\cO_{M})^{G}$.\end{lem}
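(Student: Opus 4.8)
The plan is to treat equivariance and bijectivity separately, deducing each from statements already established in this subsection.

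For equivariance I would argue directly from the compatibility of $u$ with the two actions. By the first part of the earlier lemma of this subsection, $u$ intertwines the non-Galois $C_{G}(H)$-action on $\AA_{\cO_M,\by}^n$ with the restriction to $C_{G}(H)$ of the given $G$-action on $\AA_{\cO_K,\bx}^n$; that is, $u\circ g=g\circ u$ for every $g\in C_{G}(H)$. The non-Galois action on $\AA_{\cO_M,\by}^n(\cO_M)^G$ is $g\cdot\gamma=g\circ\gamma$, and the $C_{G}(H)$-action on $\AA_{\cO_K,\bx}^n(\cO_M)^G$ is likewise $g\cdot\beta=g\circ\beta$ (the action induced by the given $G$-action, as in section \ref{sub:action}). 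I would then compute
\[
\alpha(g\cdot\gamma)=u\circ(g\circ\gamma)=(u\circ g)\circ\gamma=(g\circ u)\circ\gamma=g\circ(u\circ\gamma)=g\cdot\alpha(\gamma),
\]
which gives equivariance with no further work.

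For bijectivity the idea is to factor $\alpha$ through the scalar-extension bijection $\AA_{\cO_K,\by}^n(\cO_K)\to\AA_{\cO_M,\by}^n(\cO_M)^G$ of the preceding lemma and to identify the resulting composite with the coordinate isomorphism $\cO_K^{\oplus n}\cong\Xi_M=\AA_{\cO_K,\bx}^n(\cO_M)^G$. Concretely, I would take $\delta\in\AA_{\cO_K,\by}^n(\cO_K)$ with $\delta^{*}(y_j)=c_j\in\cO_K$; its scalar extension is the $G$-equivariant point $\gamma$ with $\gamma^{*}(y_j)=c_j$, and then, using the defining formula $u^{*}(x_i)=\sum_j\phi_j(x_i)y_j$,
\[
(u\circ\gamma)^{*}(x_i)=\gamma^{*}\bigl(u^{*}(x_i)\bigr)=\sum_{j=1}^{n}\phi_j(x_i)\,c_j.
\]
Under the identification $\AA_{\cO_K,\bx}^n(\cO_M)^G\cong\Xi_M$, $\beta\mapsto\beta^{*}|_{\cO_K[\bx]_1}$, the point $u\circ\gamma$ therefore corresponds to $\sum_j c_j\phi_j\in\Xi_M$. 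Hence the composite sends $(c_j)_j$ to $\sum_j c_j\phi_j$, which is precisely the isomorphism $\cO_K^{\oplus n}\xrightarrow{\sim}\Xi_M$ determined by the chosen basis $\phi_1,\dots,\phi_n$. Since the scalar-extension map is a bijection and this composite is a bijection, $\alpha$ is a bijection.

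The argument is essentially formal, so I do not expect a serious obstacle; the only point demanding care is the bookkeeping in the composite, namely tracing a point through scalar extension and then through $u$ and checking that the result matches $\sum_j c_j\phi_j$ under the identification of $\AA_{\cO_K,\bx}^n(\cO_M)^G$ with $\Xi_M$. This rests on the formula $u^{*}(x_i)=\sum_j\phi_j(x_i)y_j$ together with the fact that each $\gamma^{*}(y_j)$ lies in $\cO_K$, both of which are already in place.
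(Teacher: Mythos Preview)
Your proof is correct and follows essentially the same route as the paper. The paper dismisses equivariance as clear and, for bijectivity, identifies both $\AA_{\cO_M,\by}^n(\cO_M)^G$ and $\AA_{\cO_K,\bx}^n(\cO_M)^G$ directly with $\Xi_M$ and checks (on basis vectors $\phi_i$) that $\alpha$ becomes the identity of $\Xi_M$; your version routes through the scalar-extension bijection and computes on a general element $(c_j)$ instead, but the underlying calculation $(u\circ\gamma)^{*}(x_i)=\sum_j\phi_j(x_i)c_j$ is exactly the same.
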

\begin{proof}
The map is clearly $C_{G}(H)$-equivariant. As for the bijectivity,
the point is that both sets $\AA_{\cO_{M},\by}^{n}(\cO_{M})^{G}$
and $\AA_{\cO_{K},\bx}^{n}(\cO_{M})^{G}$ are naturally identified
with $\Xi_{M}$. A point $\gamma\in\AA_{\cO_{M},\by}^{n}(\cO_{M})$
is $G$-equivariant if and only if $\gamma^{*}(y_{i})\in\cO_{K}=(\cO_{M})^{G}$.
Recalling that $y_{1},\dots,y_{n}$ are the dual basis of $\phi_{1},\dots,\phi_{n}$,
we identify $\AA_{\cO_{M},\by}^{n}(\cO_{M})^{G}$ with $\bigoplus_{i=1}^{n}\cO_{K}\cdot\phi_{i}=\Xi_{M}$.
By Definition \ref{def:u}, $(u(\phi_{i}))^{*}$ sends $x_{i'}$ to
\[
\sum_{j=1}^{n}\phi_{j}(x_{i'})\phi_{i}(y_{j})=\phi_{i}(x_{i'}).
\]
Namely the restriction of $(u(\phi_{i}))^{*}$ to the linear part
$\cO_{K}[\bx]_{1}$ is $\phi_{i}$. Thus, with the obvious identification
$\AA_{\cO_{K},\bx}^{n}(\cO_{M})^{G}=\Xi_{M}$, the map $\alpha$ corresponds
to the identity map of $\Xi_{M}$. In particular, $\alpha$ is bijective. 
\end{proof}
We have obtained two one-to-one correspondences which are $C_{G}(H)$-equivariant,

\[
\xymatrix{ & \AA_{\cO_{M},\by}^{n}(\cO_{M})^{G}\ar@{<->}[dl]\ar@{<->}[dr]\\
\AA_{\cO_{K},\bx}^{n}(\cO_{M})^{G} &  & \AA_{\cO_{K},\by}^{n}(\cO_{K}),
}
\]
which induce one-to-one correspondences
\[
\xymatrix{ & \frac{\AA_{\cO_{M},\by}^{n}(\cO_{M})^{G}}{C_{G}(H)}\ar@{<->}[dl]\ar@{<->}[dr]\\
\frac{\AA_{\cO_{K},\bx}^{n}(\cO_{M})^{G}}{C_{G}(H)} &  & \frac{\AA_{\cO_{K},\by}^{n}(\cO_{K})}{C_{G}(H)}.
}
\]

\begin{defn}
Let $\AA_{\cO_{K},\by}^{n}(\cO_{K})^{\natural}$ (resp. $\AA_{\cO_{M},\by}^{n}(\cO_{M})^{G,\natural}$)
be the set of $\cO_{K}$-points (resp. $G$-equivariant $\cO_{M}$-points)
sending the generic point(s) into the locus where $\AA_{\cO_{K},\by}^{n}\to\AA_{\cO_{K},\bx}^{n}/G$
(resp. $\AA_{\cO_{M},\by}^{n}\to\AA_{\cO_{K},\bx}^{n}/G$) is étale. 
\end{defn}
From Lemma \ref{lem:u etale}, the above correspondences restrict
to the correspondences
\[
\xymatrix{ & \frac{\AA_{\cO_{M},\by}^{n}(\cO_{M})^{G,\natural}}{C_{G}(H)}\ar@{<->}[dl]\ar@{<->}[dr]\\
\frac{\AA_{\cO_{K},\bx}^{n}(\cO_{M})^{G,\natural}}{C_{G}(H)} &  & \frac{\AA_{\cO_{K},\by}^{n}(\cO_{K})^{\natural}}{C_{G}(H)}.
}
\]
From Proposition \ref{prop:general point corr}, we obtain:
\begin{prop}
\label{prop:bij1}The natural map
\[
\bigsqcup_{M\in\GEt}\frac{\AA_{\cO_{K},\by}^{n}(\cO_{K})^{\natural}}{C_{G}(H)}\to\left(\AA_{\cO_{K},\bx}^{n}/G\right)(\cO_{K})^{\natural},
\]
induced by the morphisms $\Psi^{|M|}:\AA_{\cO_{K},\by}^{n}\to\AA_{\cO_{K},\bx}^{n}/G$,
is bijective. Here $H$, $\AA_{\cO_{K},\by}^{n}$ and $\Psi^{|M|}$
vary, depending on $M$.
\end{prop}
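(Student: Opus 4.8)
The plan is to deduce the proposition by feeding the special case $V=\AA_{\cO_{K},\bx}^{n}$, $X=V/G$ into the abstract point-correspondence of Proposition \ref{prop:general point corr}, and then translating each term through the explicit chain of $C_{G}(H)$-equivariant bijections assembled in the lemmas of this subsection. Indeed, Proposition \ref{prop:general point corr} already asserts that
\[
\bigsqcup_{M\in\GEt}\frac{\AA_{\cO_{K},\bx}^{n}(\cO_{M})^{G,\natural}}{C_{G}(H)}\to\left(\AA_{\cO_{K},\bx}^{n}/G\right)(\cO_{K})^{\natural}
\]
is bijective, so the only real work is to rewrite each source term $\frac{\AA_{\cO_{K},\bx}^{n}(\cO_{M})^{G,\natural}}{C_{G}(H)}$ as $\frac{\AA_{\cO_{K},\by}^{n}(\cO_{K})^{\natural}}{C_{G}(H)}$ and to verify that under this rewriting the transition map becomes exactly the one induced by $\Psi^{|M|}$.

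First I would invoke the two-step correspondence just established: scalar extension identifies $\frac{\AA_{\cO_{K},\by}^{n}(\cO_{K})^{\natural}}{C_{G}(H)}$ with $\frac{\AA_{\cO_{M},\by}^{n}(\cO_{M})^{G,\natural}}{C_{G}(H)}$, and the map $\alpha$ (composition with $u$) identifies the latter with $\frac{\AA_{\cO_{K},\bx}^{n}(\cO_{M})^{G,\natural}}{C_{G}(H)}$. Both are $C_{G}(H)$-equivariant bijections, and both respect the $\natural$-conditions because $u$ is \'etale on the relevant loci by Lemma \ref{lem:u etale}. Composing these with the bijection of Proposition \ref{prop:general point corr} yields a bijection $\bigsqcup_{M}\frac{\AA_{\cO_{K},\by}^{n}(\cO_{K})^{\natural}}{C_{G}(H)}\to(\AA_{\cO_{K},\bx}^{n}/G)(\cO_{K})^{\natural}$, which establishes the bijectivity claimed.

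The step needing genuine verification, and the crux of the argument, is that this composite really is the map induced by $\Psi^{|M|}$, i.e.\ that it sends the class of $\delta\in\AA_{\cO_{K},\by}^{n}(\cO_{K})^{\natural}$ to $\Psi^{|M|}\circ\delta$. Here I would use the commutative triangle relating $u$, the scalar extension $\otimes_{\cO_{K}}\cO_{M}$, $\Psi$ and $\Psi^{|M|}$, which furnishes the identity $\Psi\circ u=\Psi^{|M|}\circ(\otimes_{\cO_{K}}\cO_{M})$. Tracing $\delta$ through the chain, scalar extension produces $\gamma\in\AA_{\cO_{M},\by}^{n}(\cO_{M})^{G}$ with $\gamma^{*}(y_{j})=\delta^{*}(y_{j})\in\cO_{K}$, so that $(\otimes_{\cO_{K}}\cO_{M})\circ\gamma$ factors through $\delta$; then $\alpha(\gamma)=u\circ\gamma$, and Proposition \ref{prop:general point corr} assigns to $u\circ\gamma$ the quotient morphism $\Spec\cO_{K}\to X$ induced by $\Psi\circ u\circ\gamma$. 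By the triangle this quotient morphism equals the one induced by $\Psi^{|M|}\circ(\otimes_{\cO_{K}}\cO_{M})\circ\gamma$, which, by the factorization through $\delta$, is precisely $\Psi^{|M|}\circ\delta$. This identifies the two maps.

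I expect the main (and essentially only) obstacle to be this last compatibility check, which is a diagram chase: one must track carefully which actions are in play at each stage (Galois versus non-Galois $C_{G}(H)$-actions) and confirm that the identifications $\AA_{\cO_{K},\bx}^{n}(\cO_{M})^{G}\cong\Xi_{M}\cong\AA_{\cO_{M},\by}^{n}(\cO_{M})^{G}$ used in the lemmas are compatible with passing to $X(\cO_{K})$ along $\Psi$. Once the commutative triangle is invoked the verification is formal, and in particular no measure-theoretic input is required for this proposition.
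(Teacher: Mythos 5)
Your proposal is correct and follows essentially the same route as the paper: the paper likewise obtains Proposition \ref{prop:bij1} by composing the two $C_{G}(H)$-equivariant bijections (scalar extension and $\alpha$), restricting to the $\natural$-loci via Lemma \ref{lem:u etale}, and applying Proposition \ref{prop:general point corr}. Your final diagram chase confirming that the composite is the map induced by $\Psi^{|M|}$ is a compatibility the paper leaves implicit, and your verification of it via the commutative triangle $\Psi\circ u=\Psi^{|M|}\circ(\otimes_{\cO_{K}}\cO_{M})$ is sound.
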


\subsection{\label{sub:G-var V}}

Let $V\subset\AA_{\cO_{K},\bx}^{n}$ be a normal closed $\cO_{K}$-subvariety
stable under the $G$-action such that the induced $G$-action on
$V$ is faithful. Let $X$ be the quotient variety $V/G$ and $\overline{X}$
be the image of $V$ in $\AA_{\cO_{K},\bx}^{n}/G$. The canonical
morphism $X\to\overline{X}$ is finite and birational. If $U\subset X$
and $\overline{U}\subset\overline{X}$ denote the loci where $V\to X$
and $V\to\overline{X}$ are unramified, then the morphism $X\to\overline{X}$
induces an isomorphism $U\to\overline{U}$.
\begin{defn}
For $M\in\GEt$, we define $V^{|M|}$ to be the preimage of $\overline{X}$
in $\AA_{\cO_{K},\by}^{n}$ and $V^{\left\langle M\right\rangle }$
to be the preimage of $\overline{X}$ in $\AA_{\cO_{M},\by}^{n}$,
both of which are given the reduced scheme structures. 
\end{defn}
Let $V^{\left\langle M\right\rangle ,\nu}$ and $V^{|M|,\nu}$ be
the normalizations of $V^{\left\langle M\right\rangle }$ and $V^{|M|}$
respectively. The normalization morphisms $V^{\left\langle M\right\rangle ,\nu}\to V^{\left\langle M\right\rangle }$
and $V^{|M|,\nu}\to V^{\left|M\right|}$ are isomorphisms over $V^{\left\langle M\right\rangle }\otimes M$
and $V^{|M|}\otimes K$ respectively. We have got the following diagram:
\[
\xymatrix{ & V^{\left\langle M\right\rangle ,\nu}\ar[d]\ar[dr]\\
 & V^{\left\langle M\right\rangle }\ar[dl]\ar[dr] & V^{|M|,\nu}\ar[d]\ar[ddl]\\
V\ar[ddr]_{\psi}\ar[dr] &  & V^{|M|}\ar[ddl]^{\psi^{|M|}}\\
 & X\ar[d]\\
 & \overline{X}
}
\]
Here we name morphisms as in the diagram. All morphisms here are generically
étale. Let $\overline{X}(\cO_{K})^{\natural}$ be the set of $\cO_{K}$-points
of $\overline{X}$ sending the generic point into the unramified locus
of $V\to\overline{X}$ (equivalently of $V^{|M|}\to\overline{X}$)
and $V^{|M|}(\cO_{K})^{\natural}$ (resp. $V^{|M|,\nu}(\cO_{K})^{\natural}$)
be the set of $\cO_{K}$-points of $V^{|M|}$ (resp. $V^{|M|,\nu}$)
sending the generic point into the unramified locus of $V^{|M|}\to\overline{X}$
(resp. $V^{|M|,\nu}\to\overline{X}$).
\begin{prop}
\label{prop:bij2}The natural maps
\begin{gather*}
\bigsqcup_{M\in\GEt}\frac{V^{|M|}(\cO_{K})^{\natural}}{C_{G}(H)}\to\overline{X}(\cO_{K})^{\natural}\text{ and}\\
\bigsqcup_{M\in\GEt}\frac{V^{|M|,\nu}(\cO_{K})^{\natural}}{C_{G}(H)}\to X(\cO_{K})^{\natural}
\end{gather*}
are bijective. Here the subgroup $H\subset G$ varies, depending on
$M$.\end{prop}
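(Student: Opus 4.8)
The plan is to obtain the two bijections by restricting, respectively, the affine-space untwisting bijection of Proposition \ref{prop:bij1} and the point correspondence of Proposition \ref{prop:general point corr} to the $G$-stable subvariety $V$, and then to pass between the (possibly non-normal) models $\overline{X},V^{|M|}$ and their normalizations $X,V^{|M|,\nu}$.

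First I would treat the upper bijection, onto $\overline{X}(\cO_{K})^{\natural}$. Since $V$ is $G$-stable we have $\Psi^{-1}(\overline{X})=V$ as sets, so that $V^{\langle M\rangle}=u^{-1}(V)$ and $V^{|M|}=(\Psi^{|M|})^{-1}(\overline{X})$ as reduced schemes. Because $\Spec\cO_{K}$ is reduced, an $\cO_{K}$-point $\gamma$ of $\AA_{\cO_{K},\by}^{n}$ factors through $V^{|M|}$ if and only if $\Psi^{|M|}\circ\gamma$ lands in $\overline{X}$; hence the bijection of Proposition \ref{prop:bij1} carries the classes of $\natural$-points of $V^{|M|}$ onto the points of $(\AA_{\cO_{K},\bx}^{n}/G)(\cO_{K})^{\natural}$ landing in $\overline{X}$. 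That these are exactly the points of $\overline{X}(\cO_{K})^{\natural}$, and that the $\natural$-conditions match, follows from two observations: the unramified locus of $V\to\overline{X}$ in $\overline{X}$ is the intersection with $\overline{X}$ of the locus in $\AA_{\cO_{K},\bx}^{n}/G$ over which $\AA_{\cO_{K},\bx}^{n}\to\AA_{\cO_{K},\bx}^{n}/G$ is étale (as $G$ acts freely on $v\in V$ precisely when it does on $v\in\AA_{\cO_{K},\bx}^{n}$), and the unramified locus of $V^{|M|}\to\overline{X}$ is the intersection of $V^{|M|}$ with the étale locus of $\AA_{\cO_{K},\by}^{n}\to\AA_{\cO_{K},\bx}^{n}/G$, since $V^{|M|}\to\overline{X}$ is the base change of the latter along $\overline{X}\hookrightarrow\AA_{\cO_{K},\bx}^{n}/G$. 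The $C_{G}(H)$-equivariance is inherited verbatim, giving the first bijection.

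For the lower bijection I would pass to normalizations on both sides of the first one, using that $X$ and $V^{|M|,\nu}$ are normal while $\overline{X}$ and $V^{|M|}$ need not be. For a finite birational morphism $\pi:\widetilde{Z}\to Z$ with $\widetilde{Z}$ normal and any $\cO_{K}$-point $\beta$ of $Z$ whose generic point lands in the open locus over which $\pi$ is an isomorphism, the generic point lifts uniquely to $\widetilde{Z}$, and then the valuative criterion of properness (applied to the finite, hence proper, morphism $\pi$ and the valuation ring $\cO_{K}$) produces a unique lift $\Spec\cO_{K}\to\widetilde{Z}$; thus $\pi$ induces a bijection on such points. I would apply this to $X\to\overline{X}$, which is an isomorphism over $\overline{U}$, and to $\nu:V^{|M|,\nu}\to V^{|M|}$, and then combine with the first bijection.

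The main thing to verify, and where I expect the only real difficulty, is that the $\natural$-points indeed have generic point in the isomorphism locus of the relevant normalization; since $\natural$ constrains only the generic point, it suffices to argue on the generic fibers over $K$. Over $\overline{U}$ the morphism $X\to\overline{X}$ is an isomorphism, so $\overline{X}$ is normal there, being isomorphic to an open subscheme of the normal variety $X=V/G$; hence the $\natural$-points of $\overline{X}$ lie over the isomorphism locus of $X\to\overline{X}$. Moreover, since the unramified locus of $V^{|M|}\to\overline{X}$ in $\overline{X}$ is again $\overline{U}$, the $\natural$-points of $V^{|M|}$ lie over $\overline{U}$, where $V^{|M|}\to\overline{X}$ is étale on generic fibers (by Lemma \ref{lem:u etale}); being étale over the normal scheme $\overline{X}|_{\overline{U}}$, the source $V^{|M|}$ is normal there, so $\nu$ is an isomorphism over it. This yields the second bijection, again $C_{G}(H)$-equivariantly. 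Alternatively, the lower bijection follows directly from Proposition \ref{prop:general point corr} once one identifies $V^{|M|}(\cO_{K})^{\natural}$ with $V(\cO_{M})^{G,\natural}$ through the untwisting correspondences of Section \ref{sub:point corresp} and then normalizes.
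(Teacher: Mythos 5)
Your overall route is the same as the paper's: obtain the first bijection by restricting the untwisting bijection of Proposition \ref{prop:bij1} to points landing in $\overline{X}$ (using reducedness of $\Spec\cO_{K}$ to factor through the reduced preimage $V^{|M|}$), and obtain the second by passing to normalizations, which are isomorphisms on generic fibers, so that $\natural$-points transfer uniquely (your valuative-criterion argument is a correct way to see the bijections $V^{|M|,\nu}(\cO_{K})^{\natural}\to V^{|M|}(\cO_{K})^{\natural}$ and $X(\cO_{K})^{\natural}\to\overline{X}(\cO_{K})^{\natural}$ that the paper invokes). That part is fine.

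The gap is in your two "observations" matching the $\natural$-loci, which is exactly what legitimizes "restriction of a bijection is a bijection." In Observation B you claim the unramified locus of $V^{|M|}\to\overline{X}$ equals $V^{|M|}\cap(\text{\'etale locus of }\Psi^{|M|})$ \emph{because} $V^{|M|}\to\overline{X}$ is a base change of $\Psi^{|M|}$. Base change gives only one inclusion: \'etaleness is stable under pullback, so the intersection with the \'etale locus lies inside the unramified locus of the restricted map. The reverse inclusion is false as a general principle (restrict $x\mapsto x^{2}$ on $\AA^{1}$ to the origin: the restriction is unramified at a point where the ambient map is ramified), and it is precisely the inclusion you need: without it, a point of $V^{|M|}(\cO_{K})^{\natural}$ in the sense of Proposition \ref{prop:bij2} need not lie in $\AA_{\cO_{K},\by}^{n}(\cO_{K})^{\natural}$, i.e.\ in the domain where Proposition \ref{prop:bij1} asserts anything, so neither well-definedness nor injectivity of the restricted map follows. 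The equality does hold here, but for reasons specific to the situation: one must compare both loci, through the finite \'etale projection $\AA_{\cO_{M},\by}^{n}\to\AA_{\cO_{K},\by}^{n}$ and the \'etale map $u$ of Lemma \ref{lem:u etale} on generic fibers, with the unramified locus of $V\to\overline{X}$, identified in turn with the trivial-inertia locus. Relatedly, in Observation A "as $G$ acts freely on $v$" is not the right criterion: over a non-algebraically closed field the quotient map can be \'etale at a point with non-trivial stabilizer (the correct invariant is the inertia group, equivalently the stabilizer of a geometric point), so the freeness comparison between $V$ and $\AA_{\cO_{K},\bx}^{n}$ should be an inertia-group comparison. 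Both repairs are routine --- and to be fair, the paper itself buries these facts in its set-up (the assertion $U\cong\overline{U}$ and the parenthetical "(equivalently of $V^{|M|}\to\overline{X}$)" in the definition of the $\natural$-sets) --- but as written your justifications do not establish the equalities of loci on which your restriction argument rests.
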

\begin{proof}
The first map is a restriction of the map in Proposition \ref{prop:bij1}
and easily seen to be bijective. Since $V^{|M|,\nu}\otimes K\to V^{|M|}\otimes K$
and $U\to\overline{U}$ are isomorphisms, we have natural bijections
$V^{|M|,\nu}(\cO_{K})^{\natural}\to V^{|M|}(\cO_{K})^{\natural}$
and $X(\cO_{K})^{\natural}\to\overline{X}(\cO_{K})^{\natural}$. It
follows that the second map of the proposition is also bijective. 
\end{proof}

\section{Main results\label{sec:Main-results}}

Using the untwisting, we introduce the notion of \emph{$G$-stringy
point counts }for $G$-log pairs and prove our main results.

\subsection{\label{sub:G-pair}}
\begin{defn}
A \emph{$G$-log pair} is a log pair $(V,E)$ with a faithful $G$-action
on $V$ such that $E$ is $G$-stable, that is, for every $g\in G$,
$g_{*}E=E$. 
\end{defn}
Let $(V,E)$ be a $G$-log pair and let $X:=V/G$ be the quotient
scheme, and let $\pi:V\to X$ be the quotient morphism. 
\begin{lem}
There exists a unique $\QQ$-divisor $D$ such that $(X,D)$ is a
log pair and the induced morphism $(V,E)\to(X,D)$ is crepant.\end{lem}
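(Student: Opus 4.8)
The plan is to construct $D$ by a codimension-one computation and then upgrade this to the $\QQ$-Cartier statement by a descent argument special to the quotient $\pi$. First I would record the geometry of $\pi\colon V\to X=V/G$: it is finite and surjective, and faithfulness of the $G$-action makes it a generically étale $G$-cover (as used in Section~\ref{sec:Group-actions}), so that $G$ acts on $K(V)$ with fixed ring $K(X)$. Since $V$ and $X$ are normal, hence regular in codimension one, I may compare $\QQ$-divisors coefficientwise along prime divisors. Generic étaleness makes the canonical map $\pi^{*}\Omega_{X/\cO_{K}}^{d}\to\Omega_{V/\cO_{K}}^{d}$ nonzero; its divisor of zeros is the effective ramification divisor $R$, characterized in codimension one by $K_{V}=\pi^{*}K_{X}+R$. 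As $\pi$ is a Galois cover and this map is $G$-equivariant, $R$ is $G$-invariant. This mirrors the earlier lemma on generically étale morphisms, run in the descent direction.

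Next I would define $D$. Crepancy of $(V,E)\to(X,D)$ forces $\pi^{*}(K_{X}+D)=K_{V}+E$, equivalently $\pi^{*}D=E+R$, in codimension one. The divisor $E+R$ is $G$-invariant, $E$ being $G$-stable by hypothesis and $R$ by the above. For a prime divisor $P\subset X$, the group $G$ permutes transitively the finitely many primes of $V$ over $P$, all sharing a common ramification index $e_{P}$; by $G$-invariance the coefficient of $E+R$ along each such prime is a single number $m_{P}$. Declaring the coefficient of $P$ in $D$ to be $m_{P}/e_{P}\in\QQ$ produces the unique $\QQ$-divisor with $\pi^{*}D=E+R$. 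This yields at once existence and uniqueness of $D$, together with the identity $\pi^{*}(K_{X}+D)=K_{V}+E$ in codimension one.

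The main obstacle is to show that $K_{X}+D$ is $\QQ$-Cartier; everything else is codimension-one bookkeeping. Here I would exploit the group quotient. Fix $r\in\NN$ with $r(K_{V}+E)$ Cartier. Because $K_{V}$ is canonical and $E$ is $G$-stable, the invertible sheaf $\cO_{V}(r(K_{V}+E))$ carries a natural $G$-linearization. By the standard descent criterion for a finite quotient (Kempf's descent lemma), such a sheaf descends to a line bundle on $X=V/G$ precisely when, for every point $v\in V$, the stabilizer $G_{v}$ acts trivially on the fiber. The $G_{v}$-action on the one-dimensional fiber is given by a character $\chi_{v}$ with $\chi_{v}(g)^{|G_{v}|}=\chi_{v}(g^{|G_{v}|})=1$, so $\chi_{v}^{|G|}=1$ for all $v$. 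Replacing $r$ by $|G|\cdot r$ (the divisor $r|G|(K_{V}+E)$ is a multiple of the Cartier divisor $r(K_{V}+E)$, hence Cartier) kills every such character, so $\cO_{V}(r|G|(K_{V}+E))$ descends to an invertible sheaf $\cN$ on $X$ with $\pi^{*}\cN\cong\cO_{V}(r|G|(K_{V}+E))$.

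Finally I would compare $\cN$ with $D$. The codimension-one identity $\pi^{*}(K_{X}+D)=K_{V}+E$ forces $\cN\cong\cO_{X}(r|G|(K_{X}+D))$, so $r|G|(K_{X}+D)$ is Cartier and $K_{X}+D$ is $\QQ$-Cartier; thus $(X,D)$ is a log pair. For crepancy, take $r'$ clearing denominators for both pairs: the canonical map $\pi^{*}\cO_{X}(r'(K_{X}+D))\to\cO_{V}(r'(K_{V}+E))$ is an isomorphism in codimension one by the divisor identity, and since both are invertible sheaves on the normal variety $V$, an isomorphism in codimension one extends to a global isomorphism. Hence $(V,E)\to(X,D)$ is crepant. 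The only genuinely nontrivial point is the $\QQ$-Cartier descent, and the device that makes it work is raising $r$ to a multiple of $|G|$ so that Kempf descent applies uniformly across all inertia characters.
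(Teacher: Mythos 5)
Your codimension-one construction of $D$ and the uniqueness argument are correct and essentially the same as the paper's (the paper packages the same bookkeeping as $D=\frac{1}{\sharp G}\pi_{*}(E-K_{V/X})$ with $K_{V/X}$ the ramification divisor). The genuine gap is at the step you yourself flag as the crux. The ``if'' direction of Kempf's descent criterion --- a $G$-linearized line bundle on $V$ descends to $V/G$ whenever every stabilizer $G_{v}$ acts trivially on the fiber at $v$ --- is a linearly-reductive statement: it holds in characteristic zero and at tame points ($p\nmid\sharp G_{v}$), but it fails exactly at wild points, which are the raison d'\^etre of this paper. Indeed, if $G_{v}$ is a $p$-group and $k(v)$ has characteristic $p$, then the fiber character $\chi_{v}$ is \emph{automatically} trivial, since $k(v)^{*}$ has no nontrivial $p$-torsion; so your criterion would assert that every $G$-linearized line bundle descends near such a point. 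That is false. The true obstruction at $v$ is the class of the cocycle $g\cdot s=u_{g}s$ (for a generator $s$ of the stalk) in $H^{1}\bigl(G_{v},\cO_{V,v}^{*}\bigr)$, and triviality of its image in $H^{1}\bigl(G_{v},k(v)^{*}\bigr)$ --- your character condition --- does not force it to vanish, because $H^{1}(G_{v},1+\fm_{v})$ need not vanish for wild actions. By Samuel's theory, when a $p$-group acts on a regular local ring freely in codimension one this $H^{1}$ is the class group of the quotient singularity, and wild quotient singularities with nontrivial class group exist: for instance Artin's wildly ramified $\ZZ/2\ZZ$-actions on $k[[x,y]]$ in characteristic two have rational double points of type $D$ as quotients, whose class groups are nontrivial. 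Each nontrivial class gives a $G$-linearized line bundle with trivial stabilizer action on all fibers that does \emph{not} descend.

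The good news is that your power trick survives, but for a stalk-level reason you did not give: $H^{1}\bigl(G_{v},\cO_{V,v}^{*}\bigr)$ is annihilated by $\sharp G_{v}$ (restriction--corestriction), hence by $\sharp G$, so the obstruction class of $\cO_{V}\bigl(r\sharp G(K_{V}+E)\bigr)$ vanishes at every point; one then gets local $G_{v}$-invariant generators, and $\pi^{*}\bigl((\pi_{*}L)^{G}\bigr)\to L$ is an isomorphism, which is descent. Alternatively, argue as the paper implicitly does, via the standard fact that for a finite quotient $\pi:V\to X=V/G$ a $\QQ$-Weil divisor $D$ on $X$ is $\QQ$-Cartier if and only if $\pi^{*}D$ is: if $\pi^{*}(mD)=\mathrm{div}(f)$ near an orbit (any Cartier divisor on a semilocal ring is principal), then the norm $\prod_{g\in G}g^{*}f$ is $G$-invariant, hence lies in $K(X)$, and cuts out $m\,\sharp G\,D$ near the image point, so $m\,\sharp G\,D$ is locally principal. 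Either repair is short, but as written your proof of the $\QQ$-Cartier property rests on a criterion that is wrong precisely in the wild setting the lemma is meant to cover.
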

\begin{proof}
Let $K_{V/X}$ be the ramification divisor of $\pi$, defined so that
the equality of subsheaves of $\omega_{V}$, 
\[
\omega_{V}(-K_{V/X})=\pi^{*}\omega_{X},
\]
holds in codimension one. We put $D:=\frac{1}{\sharp G}\pi_{*}(E-K_{V/X})$.
The pull-back $\pi^{*}(K_{X}+D)$ is defined at least in codimension
one and coincides with $K_{V}+E$. Since the pull-back map $\pi^{*}$
gives a one-to-one correspondence of $\QQ$-Cartier divisors on $X$
and $G$-stable $\QQ$-Cartier divisors on $V$, we conclude that
$K_{X}+D$ is $\QQ$-Cartier. Thus $(X,D)$ is a log variety such
that $(V,E)\to(X,D)$ is crepant. The uniqueness of $D$ is obvious.
\end{proof}

\subsection{\label{sub:embed}}

In this subsection, we suppose that $(V,E)$ is a $G$-log pair with
$V$ affine. Then there exist an affine variety $\AA_{\cO_{K}}^{n}$
with a faithful $\cO_{K}$-linear $G$-action and a $G$-equivariant
closed embedding $V\hookrightarrow\AA_{\cO_{K}}^{n}$. Indeed, if
the coordinate ring $\cO_{V}$ of $V$ is generated by $f_{1},\dots,f_{l}$
as an $\cO_{K}$-algebra, then we let $S$ be the union of the $G$-orbits
of these generators $f_{i}$ and consider the polynomial ring $\cO_{K}[x_{s}\mid s\in S]$
having indeterminants corresponding to elements of $S$. Now $G$
acts on this polynomial ring faithfully and $\cO_{K}$-linearly and
the natural map
\[
\cO_{K}[x_{s}\mid s\in S]\to\cO_{V},\,x_{s}\mapsto s
\]
defines a desired embedding $V\hookrightarrow\AA_{\cO_{K}}^{\sharp S}$. 

We fix such an embedding $V\subset\AA_{\cO_{K},\bx}^{n}$ and follow
the notation in section \ref{sec:Untwisting}. In particular, for
each $M\in\GEt$, we obtain the diagram in section \ref{sub:G-var V}.
We define the $G$-log structure $(V^{\left\langle M\right\rangle ,\nu},E^{\left\langle M\right\rangle ,\nu})$
and log structures $(V^{|M|,\nu},E^{|M|,\nu})$ and $(X,D)$ so that
all the solid arrows in the diagram
\[
\xymatrix{ & (V^{\left\langle M\right\rangle ,\nu},E^{\left\langle M\right\rangle ,\nu})\ar[dl]\ar[dr]\\
(V,E)\ar[dr] &  & (V^{|M|,\nu},E^{|M|,\nu})\ar@{-->}[dl]\\
 & (X,D),
}
\]
are crepant. Then the dashed arrow is also crepant, which shows $(V^{|M|,\nu},E^{|M|,\nu})$
is a $C_{G}(H)$-log pair. For a $G$-stable constructible subset
$C\subset V_{k}$ and $M\in\GEt$, we define $C^{\left\langle M\right\rangle ,\nu}\subset V_{k}^{\left\langle M\right\rangle ,\nu}$
to be the preimage of $C$ and $C^{|M|,\nu}\subset V_{k}^{|M|,\nu}$
to be its image. 
\begin{defn}
For $M\in\GEt$, we define the \emph{$M$-stringy point count }of
$(V,E)$ along $C$ by
\[
\sharp_{\st}^{M}(V,E)_{C}:=\frac{\sharp_{\st}(V^{|M|,\nu},E^{|M|,\nu})_{C^{|M|,\nu}}}{\sharp C_{G}(H)}
\]
and the $G$\emph{-stringy point count} along $C$ by
\[
\sharp_{\st}^{G}(V,E)_{C}:=\sum_{M\in\GEt}\sharp_{\st}^{M}(V,E)_{C}.
\]
Again we omit the subscript $C$ when $C=V_{k}$. \end{defn}
\begin{thm}
\label{thm:main affine}For a $G$-log pair $(V,E)$ with $V$ affine,
let $(X,D)$ be as above. Let $C\subset V_{k}$ be a $G$-stable subset
and $\overline{C}\subset X_{k}$ its image. Then we have
\[
\sharp_{\st}^{G}(V,E)_{C}=\sharp_{\st}(X,D)_{\overline{C}}.
\]
In particular, we have
\[
\sharp_{\st}^{G}(V,E)=\sharp_{\st}(X,D).
\]
\end{thm}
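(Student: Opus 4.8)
The plan is to unwind both sides into $p$-adic volumes and match them piece by piece over the index set $\GEt$, using the untwisting bijection of Proposition \ref{prop:bij2} together with the crepant change-of-variables formula of Theorem \ref{thm:change-vars}. Write $d$ for the common dimension of $X$, of $V$ and of each $V^{|M|,\nu}$. Unwinding the definitions of stringy point counts gives
\[
\sharp_{\st}(X,D)_{\overline{C}}=q^{d}\,\mu_{X,D}\bigl(X(\cO_{K})_{\overline{C}}\bigr),\qquad\sharp_{\st}^{M}(V,E)_{C}=\frac{q^{d}}{\sharp C_{G}(H)}\,\mu_{V^{|M|,\nu},E^{|M|,\nu}}\bigl(V^{|M|,\nu}(\cO_{K})_{C^{|M|,\nu}}\bigr).
\]
Since the common factor $q^{d}$ appears on both sides, it suffices to prove the corresponding identity of measures after summing $\sharp_{\st}^{M}$ over $M\in\GEt$.

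First I would pass to the $\natural$-loci. By construction $\mu_{X,D}$ is concentrated on $X(\cO_{K})^{\circ}$, and those $\cO_{K}$-points whose generic point falls into the ramification locus of $V\to X$ factor through a proper closed subscheme of positive codimension; by Lemma \ref{lem:measure zero 2} this set has measure zero. Hence $\mu_{X,D}(X(\cO_{K})_{\overline{C}})=\mu_{X,D}(X(\cO_{K})^{\natural}_{\overline{C}})$, and the identical argument on each $V^{|M|,\nu}$ replaces $V^{|M|,\nu}(\cO_{K})_{C^{|M|,\nu}}$ by its $\natural$-part up to a null set.

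Next, fix $M$ and apply Theorem \ref{thm:change-vars} to the crepant morphism $f_{M}\colon(V^{|M|,\nu},E^{|M|,\nu})\to(X,D)$ produced in \ref{sub:embed}, with the finite group $C_{G}(H)$ acting by the non-Galois action. The hypotheses to check are that $C_{G}(H)$ acts faithfully on $V^{|M|,\nu}$ and trivially on $X$, compatibly with $f_{M}$, and that the induced morphism $V^{|M|,\nu}/C_{G}(H)\to X$ is birational; the last point holds because, on the generic fibre, the untwisting identifies the $C_{G}(H)$-orbits with the generic fibre of $X$. Applying the theorem to the $C_{G}(H)$-stable open set $A_{M}:=V^{|M|,\nu}(\cO_{K})^{\natural}_{C^{|M|,\nu}}$ yields
\[
\frac{1}{\sharp C_{G}(H)}\,\mu_{V^{|M|,\nu},E^{|M|,\nu}}(A_{M})=\mu_{X,D}\bigl(f_{M}(A_{M})\bigr),
\]
so that $\sharp_{\st}^{M}(V,E)_{C}=q^{d}\,\mu_{X,D}(f_{M}(A_{M}))$.

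Finally I would sum over $M$. The content of Proposition \ref{prop:bij2} is that the $f_{M}$ induce a bijection $\bigsqcup_{M}A_{M}/C_{G}(H)\to X(\cO_{K})^{\natural}_{\overline{C}}$; in particular the images $f_{M}(A_{M})$ are pairwise disjoint and cover $X(\cO_{K})^{\natural}_{\overline{C}}$. Here the matching of the closed-point conditions ``$\in C^{|M|,\nu}$'' and ``$\in\overline{C}$'' is verified by passing through the $G$-equivariant $\cO_{M}$-point attached to a given $\cO_{K}$-point in Proposition \ref{prop:general point corr}, through which ``closed point in $C^{|M|,\nu}$'', ``closed point in $C$'' and ``closed point in $\overline{C}$'' all become equivalent. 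Summing the previous displayed identity over $M$ therefore gives
\[
\sharp_{\st}^{G}(V,E)_{C}=\sum_{M}q^{d}\,\mu_{X,D}(f_{M}(A_{M}))=q^{d}\,\mu_{X,D}\bigl(X(\cO_{K})^{\natural}_{\overline{C}}\bigr)=\sharp_{\st}(X,D)_{\overline{C}}.
\]
The main obstacle is the verification in the third paragraph that Theorem \ref{thm:change-vars} genuinely applies to each $f_{M}$: above all the birationality of $V^{|M|,\nu}/C_{G}(H)\to X$ and the faithfulness of the non-Galois action, since this is precisely where the geometry of the twisted varieties enters. Once these are secured, the disjointness-and-covering bookkeeping of the last paragraph rests entirely on Proposition \ref{prop:bij2} and is routine.
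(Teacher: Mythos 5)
Your overall plan is exactly the paper's: reduce to $\natural$-points via Lemma \ref{lem:measure zero 2}, handle each $M\in\GEt$ by a change-of-variables argument for the group $C_{G}(H)$, and sum over $M$ using Proposition \ref{prop:bij2}. But the step you yourself flag as the crux is wrong as you state it: the morphism $V^{|M|,\nu}/C_{G}(H)\to X$ is \emph{not} birational in general, so Theorem \ref{thm:change-vars} cannot be invoked as a black box. Indeed, over $\overline{K}$ the untwisting identifies $V^{|M|}$ with $V$: every geometric component of $u$ is a linear isomorphism (this is the content of the proof of Lemma \ref{lem:u etale}), and $\Psi^{|M|}\otimes\overline{K}$ becomes the quotient map $V\otimes\overline{K}\to\overline{X}\otimes\overline{K}$ composed with such an isomorphism. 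Hence $V^{|M|,\nu}\to X$ has generic degree $\sharp G$, and $V^{|M|,\nu}/C_{G}(H)\to X$ has generic degree $\sharp G/\sharp C_{G}(H)$, which exceeds $1$ whenever $C_{G}(H)\neq G$. For a concrete failure take $G=S_{3}$ acting on $V=\AA_{\cO_{K}}^{3}$ by permutations and $M=L$ an $S_{3}$-Galois field extension of $K$ (e.g.\ the splitting field of $x^{3}-p$ over $\QQ_{p}$ with $p\equiv2\pmod 3$): then $H=G$, $C_{G}(H)=\{1\}$, and $V^{|M|,\nu}\to X$ is a generically degree-$6$ cover, nowhere near birational. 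Your justification (``the untwisting identifies the $C_{G}(H)$-orbits with the generic fibre'') conflates the geometric fibre, which has $\sharp G$ points, with its set of rational points.

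What is true, and what the argument actually needs, is a statement about $\cO_{K}$-points rather than about the scheme morphism: over a point of $X(\cO_{K})^{\natural}$ of type $M$, exactly $\sharp C_{G}(H)$ of the $\sharp G$ geometric preimages in $V^{|M|,\nu}$ are $\cO_{K}$-rational, and they form a single free $C_{G}(H)$-orbit (freeness follows from the $\natural$-condition, since points over the unramified locus have trivial stabilizers); the remaining preimages are invisible to the measure. This is precisely what Proposition \ref{prop:bij2} supplies. So the correct repair --- and the combination the paper intends when it cites Lemma \ref{lem:measure zero 2}, Theorem \ref{thm:change-vars} and Proposition \ref{prop:bij2} in one breath --- is to rerun the \emph{proof} of Theorem \ref{thm:change-vars} for each $f_{M}$: crepancy matches the integrands, étaleness at $\natural$-points makes $A_{M}\to f_{M}(A_{M})$ a local analytic isomorphism, and Proposition \ref{prop:bij2}, substituting for the false birationality hypothesis, provides the $\sharp C_{G}(H)$-to-one counting that yields $\frac{1}{\sharp C_{G}(H)}\,\mu_{V^{|M|,\nu},E^{|M|,\nu}}(A_{M})=\mu_{X,D}\bigl(f_{M}(A_{M})\bigr)$. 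With that substitution, your remaining steps (the measure-zero reduction, the matching of the conditions on closed points, and the disjoint covering indexed by $\GEt$) are correct and coincide with the paper's proof.
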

\begin{proof}
Let $d$ be the dimension of the $\cO_{K}$-variety $V$. From Lemma
\ref{lem:measure zero 2}, Theorem \ref{thm:change-vars} and Propositions
\ref{prop:bij2}, we have
\begin{align*}
\sharp_{\st}(X,D) & _{\overline{C}}=q^{d}\cdot\mu_{X,D}(X(\cO_{K})_{\overline{C}})\\
 & =\sum_{M\in\GEt}\frac{q^{d}\cdot\mu_{V^{|M|,\nu},E^{|M|,\nu}}(V^{|M|,\nu}(\cO_{K})_{C^{|M|,\nu}})}{\sharp C_{G}(H)}\\
 & =\sum_{M\in\GEt}\frac{\sharp_{\st}(V^{|M|,\nu},E^{|M|,\nu})_{C^{|M|,\nu}}}{\sharp C_{G}(H)}\\
 & =\sum_{M\in\GEt}\sharp_{\st}^{M}(V,E)_{C}\\
 & =\sharp_{\st}^{G}(V,E)_{C}.
\end{align*}
\end{proof}
\begin{cor}
\label{cor:main non-log}Let $V$ be a normal $\QQ$-Gorenstein affine
$\cO_{K}$-variety endowed with a faithful $G$-action and $X:=V/G$
its quotient scheme. Suppose that the quotient morphism $V\to X$
is étale in codimension one. Then, for a $G$-stable constructible
subset $C\subset V_{k}$, we have 
\[
\sharp_{\st}^{G}(V)_{C}=\sharp_{\st}(X)_{\overline{C}}.
\]
In particular,
\[
\sharp_{\st}^{G}V=\sharp_{\st}X.
\]
\end{cor}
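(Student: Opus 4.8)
The plan is to deduce this corollary from Theorem~\ref{thm:main affine} by specializing to the trivial log structure. First I would regard the normal $\QQ$-Gorenstein affine variety $V$ as the $G$-log pair $(V,0)$: the divisor $E=0$ is automatically $G$-stable, and $K_{V}=K_{V}+0$ is $\QQ$-Cartier by the $\QQ$-Gorenstein hypothesis, so $(V,0)$ is a genuine $G$-log pair with $V$ affine. Theorem~\ref{thm:main affine} then applies verbatim and yields $\sharp_{\st}^{G}(V,0)_{C}=\sharp_{\st}(X,D)_{\overline{C}}$, where $D$ is the unique $\QQ$-divisor furnished by the lemma of section~\ref{sub:G-pair} making $(V,0)\to(X,D)$ crepant. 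Since $E=0$, the definition of the $G$-stringy point count identifies the left-hand side with $\sharp_{\st}^{G}(V)_{C}$, so the whole task reduces to computing $D$.

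The key step is to show that the hypothesis forces $D=0$. By the construction in section~\ref{sub:G-pair}, with $E=0$ we have $D=\frac{1}{\sharp G}\pi_{*}(E-K_{V/X})=-\frac{1}{\sharp G}\pi_{*}K_{V/X}$, where $K_{V/X}$ is the ramification divisor of the quotient morphism $\pi\colon V\to X$, characterized by $\omega_{V}(-K_{V/X})=\pi^{*}\omega_{X}$ in codimension one. The assumption that $\pi$ is étale in codimension one means precisely that $\pi$ is unramified away from a closed subset of codimension at least two; since an étale morphism has trivial relative canonical sheaf, the equality $\omega_{V}=\pi^{*}\omega_{X}$ holds in codimension one, so the ramification divisor $K_{V/X}$, being a codimension-one cycle, vanishes. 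Hence $D=-\frac{1}{\sharp G}\pi_{*}K_{V/X}=0$.

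With $D=0$ the log pair $(X,D)$ is identified with the variety $X$ viewed as $(X,0)$, so that $\sharp_{\st}(X,D)_{\overline{C}}=\sharp_{\st}(X)_{\overline{C}}$; note that $(X,D)$ being a log pair already records that $K_{X}=K_{X}+D$ is $\QQ$-Cartier, i.e.\ that $X$ is $\QQ$-Gorenstein, so the right-hand side is well defined. Combining this with the equality of the first paragraph gives $\sharp_{\st}^{G}(V)_{C}=\sharp_{\st}(X)_{\overline{C}}$, and taking $C=V_{k}$, whence $\overline{C}=X_{k}$, yields the unqualified statement $\sharp_{\st}^{G}V=\sharp_{\st}X$. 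I expect essentially no obstacle beyond the single observation that ``étale in codimension one'' annihilates the ramification divisor: all of the analytic and measure-theoretic content has been absorbed into Theorem~\ref{thm:main affine}, and what remains is the purely divisorial bookkeeping described above.
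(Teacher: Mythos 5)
Your proposal is correct and follows the same route as the paper: the paper's own proof simply observes that the hypothesis makes $(V,0)\to(X,0)$ crepant and then invokes Theorem \ref{thm:main affine}, which is exactly your argument with the crepancy unpacked into the vanishing of the ramification divisor $K_{V/X}$ and hence of $D$. Your additional bookkeeping (uniqueness of $D$, $\QQ$-Cartierness of $K_X$) is a faithful elaboration of what the paper leaves implicit.
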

\begin{proof}
From the assumption, the morphism 
\[
V=(V,0)\to X=(X,0)
\]
is crepant. Therefore the corollary is a special case of the preceding
theorem.
\end{proof}

\subsection{\label{sub:main general}}

We now consider an arbitrary $G$-log pair $(V,E)$ ($V$ is not necessarily
affine but quasi-projective from our definition of varieties in section
\ref{sec:Convention-and-notation}). Let us take an affine open cover
$V=\bigcup_{i}V_{i}$ such that each $V_{i}$ is $G$-stable. For
each $M\in\GEt$, let $\mu_{V,E,i}^{M}$ be the measure on $V_{i}(\cO_{M})^{G,\circ}$
corresponding to 
\[
\mu_{V_{i}^{|M|,\nu},(E|_{V_{i}})^{|M|,\nu}}
\]
through the correspondence $V_{i}(\cO_{M})^{G,\circ}\leftrightarrow V_{i}^{|M|}(\cO_{K})^{\circ}$.
The argument of the proof of Theorem \ref{thm:main affine} shows
that the measures $\mu_{V,E,i}^{M}$ and $\mu_{V,E,j}^{M}$ coincide
on $(V_{i}\cap V_{j})(\cO_{M})^{G,\circ}$, and we obtain a measure
on $V(\cO_{M})^{G,\circ}$, and one on $V(\cO_{M})^{G}$ by extending
it so that all subsets of $V(\cO_{M})^{G}\setminus V(\cO_{M})^{G,\circ}$
have measure zero: we denote it by $\mu_{V,E}^{M}$. 
\begin{defn}
For a $G$-stable constructible subset $C\subset V_{k}$, we define
\[
\sharp_{\st}^{G}(V,E)_{C}:=q^{d}\cdot\sum_{M\in\GEt}\frac{\mu_{V,E}^{M}(V(\cO_{M})_{C}^{G})}{\sharp C_{G}(H)},
\]
with $d$ the dimension of $V$. 
\end{defn}
With this definition, we obviously have:
\begin{thm}
\label{thm:main general}Theorem \ref{thm:main affine} and Corollary
\ref{cor:main non-log} hold without the assumption that $V$ is affine. \end{thm}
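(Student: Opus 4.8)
The plan is to reduce the general quasi-projective case to the affine case already established in Theorem \ref{thm:main affine} by working locally on a $G$-stable affine open cover and then gluing. The essential point is that all the invariants in sight are defined as $p$-adic volumes, which are countably additive, so a decomposition of the space into pieces supported on locally closed subsets of the special fiber yields a corresponding decomposition of the invariant.

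First I would fix a $G$-stable affine open cover $V=\bigcup_{i}V_{i}$, whose existence is guaranteed since $V$ is quasi-projective with a $G$-action (one may intersect an affine open with its finitely many $G$-translates, or use $G$-linearization of an ample line bundle). Setting $X=V/G$ and $X_{i}=V_{i}/G$, the $X_{i}$ form an open cover of $X$ with $\pi^{-1}(X_{i})=V_{i}$. The divisor $D$ on $X$ restricts to the divisor $D_{i}$ on $X_{i}$ determined by $(V_{i},E|_{V_{i}})\to(X_{i},D_{i})$ being crepant, and by uniqueness of the crepant structure these restrictions are compatible. I would then stratify $X_{k}$ by the locally closed sets $\overline{C}\cap(X_{i})_{k}$, passing to a disjoint refinement so that each $k$-point of $X_{k}$ is counted in exactly one chart; the additivity of $\mu_{X,D}$ over the corresponding partition of $X(\cO_{K})$ (together with Lemma \ref{lem:measure zero 2}, which kills the overlaps living in positive codimension) gives
\[
\sharp_{\st}(X,D)_{\overline{C}}=\sum_{i}\sharp_{\st}(X_{i},D_{i})_{\overline{C}\cap(X_{i})_{k}},
\]
where on the right each term is an affine invariant to which Theorem \ref{thm:main affine} applies.

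On the $G$-stringy side the matching step is to check that the globally defined measure $\mu_{V,E}^{M}$ restricts, on each $V_{i}(\cO_{M})^{G,\circ}$, to the measure $\mu_{V_{i}^{|M|,\nu},(E|_{V_{i}})^{|M|,\nu}}$ transported through the untwisting bijection $V_{i}(\cO_{M})^{G,\circ}\leftrightarrow V_{i}^{|M|}(\cO_{K})^{\circ}$ of Proposition \ref{prop:bij2}. This is exactly how $\mu_{V,E}^{M}$ was constructed in section \ref{sub:main general}, so the same additivity gives
\[
\sharp_{\st}^{G}(V,E)_{C}=\sum_{i}\sharp_{\st}^{G}(V_{i},E|_{V_{i}})_{C\cap(V_{i})_{k}}.
\]
Applying Theorem \ref{thm:main affine} chart by chart identifies the two sums term by term, and summing over $i$ yields the claimed equality $\sharp_{\st}^{G}(V,E)_{C}=\sharp_{\st}(X,D)_{\overline{C}}$; Corollary \ref{cor:main non-log} follows by specializing to $E=0$.

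The step I expect to be the main obstacle is verifying that the local measures $\mu_{V,E,i}^{M}$ genuinely agree on overlaps $(V_{i}\cap V_{j})(\cO_{M})^{G,\circ}$, so that the global measure $\mu_{V,E}^{M}$ is well-defined independently of the chosen embeddings $V_{i}\hookrightarrow\AA_{\cO_{K}}^{n}$ used in the untwisting construction. The subtlety is that each $V_{i}$ is untwisted via its own $G$-equivariant affine embedding, producing its own $V_{i}^{|M|,\nu}$, and a priori the transported measures need not match where the charts meet. The resolution is intrinsic: both transported measures equal $\mu_{\cO(r(K+D))}$ pulled back along the crepant untwisting maps, and crepancy is an intrinsic condition on the pair $(V,E)$ independent of the embedding, so the measures coincide in codimension one and hence everywhere off a null set. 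Since the excerpt asserts ``The argument of the proof of Theorem \ref{thm:main affine} shows that the measures $\mu_{V,E,i}^{M}$ and $\mu_{V,E,j}^{M}$ coincide,'' this compatibility is precisely the content one must extract from the affine proof, and once it is in hand the gluing and the final additivity are routine.
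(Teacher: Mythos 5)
Your proposal is correct and follows essentially the same route as the paper: the paper defines $\sharp_{\st}^{G}(V,E)_{C}$ in the non-affine case precisely by gluing the chart-wise measures $\mu_{V,E,i}^{M}$ obtained from the untwisting on a $G$-stable affine cover (asserting their compatibility on overlaps via the argument of Theorem \ref{thm:main affine}), after which the theorem follows immediately by additivity --- indeed the paper's own remark concedes the statement is then ``somehow tautology.'' Your explicit stratification of $\overline{C}$ into chart-supported pieces, the chart-by-chart application of Theorem \ref{thm:main affine}, and your identification of the overlap compatibility of the transported measures as the one real point to check are exactly the content of the paper's construction in section \ref{sub:main general}.
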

\begin{rem}
Our construction of the measure $\mu_{V,E}^{M}$ and the definition
of $\sharp_{\st}^{G}(V,E)_{C}$ are not completely satisfactory, because
they depend on Theorem \ref{thm:main affine} for the affine case
and then Theorem \ref{thm:main general} is somehow tautology. Therefore
it is an interesting problem to construct the measure intrinsically,
in particular, without gluing affine pieces.
\end{rem}

\section{Linear actions and mass formulas\label{sec:mass formulas}}

In this section, we consider the case $V=\AA_{\cO_{K},\bx}^{n}$ and
compute $M$-stringy point counts $\sharp_{\st}^{M}V$ for $M\in\GEt$.
Then we briefly illustrate how our main results in this case prove
mass formulas by Serre, Bhargava and Kedlaya. However we should note
that it is impossible to locally linearize a given group action, unlike
the case of an algebraically closed base field of characteristic zero.

\subsection{\label{sub:linear}}

We now suppose that $V=\AA_{\cO_{K},\bx}^{n}=\Spec\cO_{K}[x_{1},\dots,x_{n}]$
and that $G$ acts $\cO_{K}$-linearly on it and consider the trivial
log structure $V=(V,E=0)$. For each $M\in\GEt$, let $\Xi_{M}$ be
the tuning module (see section \ref{sub:untwist1}), which is a submodule
of $\Hom_{\cO_{K}}(\cO_{K}[\bx]_{1},\cO_{M})$. We denote the origin
of $V_{k}$ by $o$.
\begin{defn}
We define 
\[
\bv_{V}(M):=\frac{1}{\sharp G}\cdot\length\frac{\Hom_{\cO_{K}}(\cO_{K}[\bx]_{1},\cO_{M})}{\cO_{M}\cdot\Xi_{M}}.
\]
Let $\eta_{k}:V^{\left\langle M\right\rangle }\otimes_{\cO_{K}}k\to V_{k}$
be the base change of $\eta:V^{\left\langle M\right\rangle }\to V$
from $\cO_{K}$ to $k$. We define
\[
\bw_{V}(M):=\dim\eta_{k}^{-1}(o)-\bv_{V}(M).
\]
\end{defn}
\begin{rem}
Our definition of $\bv_{V}$ as a function associated to the $G$-variety
$V$ is identical to the one given in \cite{Wood-Yasuda-I} (see also
Remark \ref{rem: left or right}). Our definition of $\bw_{V}$ is
slightly different from the one given in \cite{Wood-Yasuda-I}. However,
the following lemma shows that they coincide in three important cases.\end{rem}
\begin{lem}
\label{lem:fiber dim}Suppose that one of the following conditions
holds:
\begin{enumerate}
\item $p\nmid\sharp G$,
\item $K=k((t))$ and the $G$-action on $V$ is the base change of one
on $V_{k}$,
\item the $G$-action on $V$ is permutation of coordinates.
\end{enumerate}
For $M\in\GEt$, let $H_{0}\subset G$ be the stabilizer of a geometric
connected component of $\Spec M$, that is, a component of $\Spec M\otimes_{\cO_{K}}\cO_{L}$
with $L$ the maximal unramified extension of $K$. Then 
\[
\dim\eta_{k}^{-1}(o)=\codim((V_{k})^{H_{0}}\subset V_{k}).
\]
\end{lem}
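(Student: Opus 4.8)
The plan is to make the map $\eta$ completely explicit, reduce $\dim\eta_k^{-1}(o)$ to the rank of a matrix built from the tuning module, and compare that rank with $\dim(V_k)^{H_0}$. First I would unwind the construction in the linear case: here $V=\AA_{\cO_K,\bx}^n$ forces $V^{\left\langle M\right\rangle}=\AA_{\cO_M,\by}^n$ and $\eta=u$, so by Definition \ref{def:u} we have $\eta^{*}(x_i)=\sum_{j=1}^{n}\phi_j(x_i)y_j$. Writing $P:=(\phi_j(x_i))_{i,j}$, an $n\times n$ matrix over $\cO_M$, the fiber $\eta_k^{-1}(o)$ is the closed subscheme of $\AA_{\cO_M\otimes_{\cO_K}k,\by}^n$ defined by the linear forms $\sum_j\overline{\phi_j(x_i)}\,y_j$. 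Since $\cO_M\otimes_{\cO_K}k$ is artinian, this subscheme has the same underlying topological space as the linear kernel obtained after further reducing the coefficients modulo $\fm_M$; hence I expect $\dim\eta_k^{-1}(o)=n-\rank\overline P$, where $\overline P$ is the reduction of $P$ to the residue field.

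Next I would pass to the strict henselization. Base change along $\cO_K\to\cO_L$ preserves the fiber dimension (the residue field of $\cO_L$ is $\overline k$) and decomposes $\Spec\cO_M\otimes_{\cO_K}\cO_L$ into connected components that $G$ permutes transitively, each totally ramified over $\cO_L$ with stabilizer a conjugate of $H_0$. All components yield matrices of the same rank, so it suffices to treat a single totally ramified $H_0$-component and to prove $\rank\overline P=\dim(V_{\overline k})^{H_0}$. One inequality is then unconditional: each $\phi_j$ is $G$-equivariant and $H_0$, being an inertia group, acts trivially on the residue field, so each reduced functional $\overline{\phi_j}$ annihilates $f\cdot h-f$ for $h\in H_0$ and factors through the $H_0$-coinvariants of $\cO_K[\bx]_1\otimes\overline k$. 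Dualizing, the columns of $\overline P$ lie in $\overline k\otimes_k(V_k)^{H_0}$, so $\rank\overline P\le\dim(V_k)^{H_0}$; that is, $\dim\eta_k^{-1}(o)\ge\codim((V_k)^{H_0}\subset V_k)$ with no hypothesis on $G$.

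The reverse inequality, that the reduced tuning maps span the \emph{entire} invariant space, is where hypotheses (1)--(3) enter, and this is the step I expect to be the main obstacle, since in a genuinely wild situation the rank can drop (this defect is exactly what $\bw_V(M)$ measures). In case (1) the group $H_0$ has order prime to $p$, so $V_{\overline k}$ splits into $H_0$-eigenspaces and the tame tuning module admits an explicit diagonal description whose reduction visibly surjects onto $(V_{\overline k})^{H_0}$. In case (2) the action is constant in the parameter $t$, which rigidifies the situation: one can choose the basis $\phi_1,\dots,\phi_n$ compatibly with a $k$-structure so that the $\overline{\phi_j}$ form a full $\overline k$-basis of the invariants. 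In case (3) one uses the combinatorial description of $\Xi_M$ attached to the $G$-set underlying the permutation action and computes $\rank\overline P$ directly as the number of $H_0$-orbits of coordinates, which equals $\dim(V_k)^{H_0}$. Assembling the two inequalities gives $\dim\eta_k^{-1}(o)=n-\dim(V_k)^{H_0}=\codim((V_k)^{H_0}\subset V_k)$, as claimed.
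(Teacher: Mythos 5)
Your reduction steps are sound, and in fact they track the paper's own proof closely in dual form: the paper also reduces by unramified base change to the case where a component $\Spec L$ of $\Spec M$ is a geometric component (so $H_{0}=H$ and the residue field of $L$ is $k$), observes that $\eta_{k}$ restricted to each component of $(V_{k}^{\left\langle M\right\rangle })_{\red}$ is a $k$-linear map, and proves \emph{unconditionally} that its image lies in $(V_{k})^{H}$ — your computation with coinvariants of $\cO_{K}[\bx]_{1}\otimes k$ is a clean algebraic dual of that inclusion, and your identity $\dim\eta_{k}^{-1}(o)=n-\rank\overline{P}$ is correct since the artinian nilpotents do not change the underlying topological space. (One side remark: the parenthetical claim that the possible rank defect "is exactly what $\bw_{V}(M)$ measures" does not match this paper's definition, $\bw_{V}(M)=\dim\eta_{k}^{-1}(o)-\bv_{V}(M)$; it is rather the discrepancy with the definition of $\bw_{V}$ in Wood--Yasuda.)

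The genuine gap is the reverse inequality, which you yourself flag as the main obstacle: none of your three case sketches is a proof, and at least case (2) is dubious as stated — the tuning module $\Xi_{M}\subset\Hom_{\cO_{K}}(\cO_{K}[\bx]_{1},\cO_{M})$ carries no evident "$k$-structure" making "choose the basis $\phi_{1},\dots,\phi_{n}$ compatibly" meaningful, and you cannot expect the $n$ reduced columns to literally form a basis of the invariants (their span has rank $\dim(V_{k})^{H_{0}}$, typically $<n$). What is missing is the uniform mechanism the paper uses, which makes all three cases trivial simultaneously: show that under each of (1)--(3) every $H$-fixed $k$-point $w$ of $V_{k}$ lifts to an $H$-fixed $\cO_{K}$-point $\tilde{w}$ of $V$ (case (1): lift arbitrarily and average over $H$, as $\sharp H$ is invertible in $\cO_{K}$; case (2): take the constant lift along $k\hookrightarrow k[[t]]$; case (3): lift coordinate values constantly on $H$-orbits). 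Then the composite $\Spec\cO_{L}\to\Spec\cO_{K}\xrightarrow{\tilde{w}}V$ is $H$-equivariant, hence extends to a $G$-equivariant $\cO_{M}$-point, i.e.\ an element of $\Xi_{M}$, equivalently an $H$-equivariant $\cO_{L}$-point of $V_{0}^{\left\langle M\right\rangle }$ whose closed-point reduction maps to $w$. This produces, for each invariant vector, a point in the image of $\eta_{k}|_{V_{k,0}^{\left\langle M\right\rangle }}$ (a vector in the column span of your $\overline{P}$) equal to it, which is exactly the spanning statement you need. Without this device, fleshing out your cases (2) and (3) would end up reproducing precisely this lifting argument, and your case (1) would need the averaging projector anyway to justify the claimed surjectivity of the tame tuning module's reduction.
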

\begin{proof}
Our construction of $V^{|M|}$ and $C^{|M|}$ is compatible with the
base change by $\cO_{K'}/\cO_{K}$ for a finite unramified extension
$K'/K$. Hence we may suppose that a connected component $\Spec L$
of $\Spec M$ is a geometric connected component as well. It follows
that $H_{0}=H$ and that $L$ has the residue field $k$. Let $\eta_{k}:V_{k}^{\left\langle M\right\rangle }\to V_{k}$
be the base change of the morphism $\eta:V^{\left\langle M\right\rangle }\to V$
by $\Spec k\to\Spec\cO_{K}$. The reduced scheme $(V_{k}^{\left\langle M\right\rangle })_{\red}$
associated to $V_{k}^{\left\langle M\right\rangle }$ is the disjoint
union of $[G:H]$ copies of $\AA_{k}^{n}$ and the restriction of
$\eta_{k}$ to each connected component $(V_{k}^{\left\langle M\right\rangle })_{\red}$
is a $k$-linear map. Moreover we can identify $V_{k}^{|M|}$ with
the connected component of $(V_{k}^{\left\langle M\right\rangle })_{\red}$
corresponding to $\Spec L$, which we denote by $V_{k,0}^{\left\langle M\right\rangle }$.
We denote the corresponding component of $V^{\left\langle M\right\rangle }$
by $V_{0}^{\left\langle M\right\rangle }$. To show the lemma, it
suffices to show that the map 
\[
\eta_{k}|_{V_{k,0}^{\left\langle M\right\rangle }}:V_{k,0}^{\left\langle M\right\rangle }\to V_{k}
\]
has image $(V_{k})^{H}$. 

Let $v$ be an arbitrary $k$-point of $V_{k,0}^{\left\langle M\right\rangle }=V_{k}^{|M|}$.
This point lifts to an $\cO_{K}$-point $\tilde{v}$ of $V^{|M|}$
and hence to an $H$-equivariant $\cO_{L}$-point $\hat{v}$ of $V_{0}^{\left\langle M\right\rangle }$.
The image of $\hat{v}$ on $V$ is also $H$-equivariant, and hence
the $k$-point 
\[
\Spec k\hookrightarrow\Spec\cO_{K}\xrightarrow{\hat{v}}V
\]
lies in $(V_{k})^{H}$. From the construction, this $k$-point is
the image of $v$, which shows that the image of $\eta_{k}|_{V_{k,0}^{\left\langle M\right\rangle }}$
is contained in $(V_{k})^{H}$. 

Next let $w$ be an arbitrary $H$-fixed $k$-point of $V_{k}$. From
either of the three conditions in the proposition, there exists an
$H$-fixed $\cO_{K}$-point $\tilde{w}$ of $V$ which is a lift of
$w$. Then the composition 
\[
\hat{w}:\Spec\cO_{L}\to\Spec\cO_{K}\xrightarrow{\tilde{w}}V
\]
is $H$-equivariant. It then lifts to an $H$-equivariant $\cO_{L}$-point
$\check{w}$ of $V_{0}^{\left\langle M\right\rangle }$. The induced
$k$-point 
\[
\Spec k\hookrightarrow\Spec\cO_{L}\xrightarrow{\check{w}}V_{0}^{\left\langle M\right\rangle }
\]
maps to $w$ by $V^{\left\langle M\right\rangle }\to V$. This proves
that the image of $\eta_{k}|_{V_{k,0}^{\left\langle M\right\rangle }}$
contains $(V_{k})^{H}$ and completes the proof of the lemma. 
\end{proof}
Let $(V^{|M|},E^{|M|})$ be the log structure on $V^{|M|}$ defined
as in section \ref{sub:embed}, where we do not need the normalization
as $V^{|M|}=\AA_{\cO_{K},\by}^{n}$ is clearly normal. 
\begin{lem}
Regarding $V_{k}^{|M|}=\AA_{k,\by}^{n}$ as a prime divisor on $V^{|M|}$,
we have 
\[
E^{|M|}=-\bv_{V}(M)\cdot V_{k}^{|M|}.
\]
\end{lem}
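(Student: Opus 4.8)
We must show that the log divisor $E^{|M|}$ on $V^{|M|}=\AA_{\cO_{K},\by}^n$ arising from the crepant requirement equals $-\bv_V(M)\cdot V_k^{|M|}$, where $V_k^{|M|}$ is the special fiber regarded as a prime divisor. The plan is to compute $E^{|M|}$ directly from the definition of crepancy in section \ref{sub:log pair}, by comparing the pullback of the canonical sheaf along the chain of morphisms to $\AA_{\cO_{K},\bx}^n/G$, and then to identify the resulting multiplicity along the special fiber with the length appearing in the definition of $\bv_V(M)$.

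\emph{First steps.} Since the statement concerns a linear action with $V=\AA^n$ and the log structure $E=0$ upstairs, the relevant crepant datum is the morphism $\Psi^{|M|}\colon\AA_{\cO_{K},\by}^n\to\AA_{\cO_{K},\bx}^n/G$ from section \ref{sub:two actions}, together with the quotient map $\Psi\colon\AA_{\cO_{K},\bx}^n\to\AA_{\cO_{K},\bx}^n/G$. I would first pull back $\omega_{\AA^n_{\bx}/\cO_K}^{\otimes r}$ (suitably twisted to make $K_{X}+D$ Cartier on the quotient) along $\Psi^{|M|}$ and compare it with $\omega_{\AA^n_{\by}/\cO_K}^{\otimes r}$; the discrepancy divisor is $E^{|M|}$ by the defining isomorphism $f^*\cO_X(r(K_X+D))\to\cO_Y(r(K_Y+E))$. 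Because everything is linear, the Jacobian of $u^*\colon x_i\mapsto\sum_j\phi_j(x_i)y_j$ (Definition \ref{def:u}) is a \emph{constant}, namely the determinant of the $n\times n$ matrix $(\phi_j(x_i))$ with entries in $\cO_M$. The central computation is therefore to read off the order of vanishing of this determinant along the prime divisor $V_k^{|M|}=\{\varpi=0\}$.

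\emph{The key identification.} The determinant of $(\phi_j(x_i))$ is precisely the index of the image of the tuning module inside $\Hom_{\cO_K}(\cO_K[\bx]_1,\cO_M)$ after extending scalars to $\cO_M$; its valuation computes $\length\bigl(\Hom_{\cO_K}(\cO_K[\bx]_1,\cO_M)/(\cO_M\cdot\Xi_M)\bigr)$. Dividing by $\sharp G$ to pass from the $\cO_M$-module length to the coefficient along the special fiber of the quotient yields exactly $\bv_V(M)$ as defined in section \ref{sub:linear}. Since the Jacobian sits in the numerator of the change-of-forms, a positive order of vanishing contributes with a \emph{negative} sign to the discrepancy coefficient, which explains the minus sign in $E^{|M|}=-\bv_V(M)\cdot V_k^{|M|}$. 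I would make this precise by working on the unramified locus where $\Psi^{|M|}$ is étale over $K$ (Lemma \ref{lem:u etale}) so that no horizontal components appear in $E^{|M|}$, leaving only the vertical contribution supported on $V_k^{|M|}$.

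\emph{Main obstacle.} The delicate point is bookkeeping the factor $\tfrac{1}{\sharp G}$ correctly and confirming that $E^{|M|}$ has \emph{no} horizontal component and no contribution from other special-fiber divisors. The horizontal vanishing follows from generic étaleness of $\Psi^{|M|}$ in characteristic zero (over $K$), so any discrepancy is concentrated on the special fiber. The normalization subtlety present in the general case of section \ref{sub:G-var V} is absent here, since $V^{|M|}=\AA_{\cO_K,\by}^n$ is already smooth, hence normal; this is what makes the linear case clean. The remaining care is to match the normalization convention: the coefficient of a vertical prime divisor in the relative canonical/discrepancy computation is the valuation of the Jacobian divided by the ramification degree, and here the passage $\Xi_M\subset\Hom_{\cO_K}(\cO_K[\bx]_1,\cO_M)$ together with the factor $\sharp G$ in the definition of $\bv_V(M)$ must be verified to produce exactly the stated coefficient. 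Once the valuation of $\det(\phi_j(x_i))$ is identified with $\sharp G\cdot\bv_V(M)$, the result follows.
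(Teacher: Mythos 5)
Your overall strategy---compute the discrepancy of $\Psi^{|M|}$ directly and identify its coefficient with the tuning-module length via $\det(\phi_j(x_i))$---is sound, and it is genuinely more direct than the paper's route: the paper first computes the induced log structure $E^{\left\langle M\right\rangle}$ upstairs on $V^{\left\langle M\right\rangle}=\AA_{\cO_M,\by}^n$, where \emph{both} the Jacobian of $u$ and the different exponent $\delta_{L/K}$ of $\cO_M/\cO_K$ contribute, and then descends along $\theta\colon V^{\left\langle M\right\rangle}\to V^{|M|}$. However, your central identification has a genuine gap. The determinant $\det(\phi_j(x_i))$ is an element of $\cO_M$, not of $\cO_K$: it is not a function on $V^{|M|}=\AA_{\cO_K,\by}^n$, so ``its order of vanishing along $V_k^{|M|}=\{\varpi=0\}$'' is undefined, and the valuations that \emph{are} available do not give what you claim. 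On a component $\Spec\cO_L$ of $\Spec\cO_M$ one has $v_L(\det)=\length_{\cO_L}(\text{corresponding component of the cokernel})$, and your recipe ``the $\cO_M$-module length divided by $\sharp G$'' produces $\bv_V(M)/f$, where $f$ is the residue degree of $L/K$---off by exactly $f$. Concretely, take $G=(\ZZ/2)^2$ acting on $\AA^2$ by $\diag(\pm1,\pm1)$ over $K=\QQ_p$ ($p$ odd) and $M=L=\QQ_p(\sqrt p,\sqrt u)$ with $u$ a non-square unit: then $\det=\sqrt{pu}$, $v_L(\det)=1$ and the $\cO_M$-length of the cokernel is $1$, while the $\cO_K$-length is $2$; a direct crepant-pullback computation gives $E^{|M|}=-\tfrac12 V_k^{|M|}$, i.e.\ $\bv_V(M)=\tfrac12$, whereas your bookkeeping yields $\tfrac14$. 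So the ``delicate point'' you flagged is precisely where the argument, as written, fails whenever the residue field extension is nontrivial.

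The missing idea is the \emph{norm}. Since each $\phi_j$ is $G$-equivariant, applying $g\in G$ to the entries of $(\phi_j(x_i))$ multiplies the matrix by the matrix of $g$ acting on $\cO_K[\bx]_1$; hence $\det(\phi_j(x_i))^{\sharp G}$ is $G$-invariant, lies in $\cO_K$, and agrees up to a unit with $N_{M/K}\bigl(\det(\phi_j(x_i))\bigr)$. Using $\Psi^{|M|}\circ\theta=\Psi\circ u$ and the normalization $\Psi^{*}\omega_0=(dx_1\wedge\cdots\wedge dx_n)^{\otimes r}$ coming from crepancy of $(V,0)\to(X,D)$ (with $r$ a sufficiently divisible multiple of $\sharp G$), one finds $\Psi^{|M|*}\omega_0=\det(\phi_j(x_i))^{r}\,(dy_1\wedge\cdots\wedge dy_n)^{\otimes r}$ with $\det^{r}\in\cO_K$, whence $r\,E^{|M|}=-\mathrm{div}(\det^{r})=-\tfrac{r}{\sharp G}\,v_K\bigl(N_{M/K}(\det)\bigr)V_k^{|M|}$. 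It is $v_K(N_{M/K}(\det))$---not any valuation applied to $\det$ itself---that equals $\length_{\cO_K}\bigl(\Hom_{\cO_K}(\cO_K[\bx]_1,\cO_M)/\cO_M\cdot\Xi_M\bigr)=\sharp G\cdot\bv_V(M)$ (Smith normal form over each $\cO_L$, keeping track of residue degrees converts $v_L$-valuations into $\cO_K$-lengths). With this correction your route closes, and it even has the virtue of never invoking the different of $\cO_M/\cO_K$, which the paper's proof must track and then cancel in the descent step; without it, the key identification is false.
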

\begin{proof}
This is a special case of \cite[Lemma 6.5]{Yasuda:2014fk2} (except
that $k$ is finite in the present paper, while it is algebraically
closed in the cited paper). The outline of the proof is as follows.
Let $m$ be the quotient of $\cO_{M}$ by the Jacobson radical. Namely
$\Spec m$ is the union of the closed points of $\Spec\cO_{M}$ with
reduced structure. We put $V_{m}^{\left\langle M\right\rangle }:=V^{\left\langle M\right\rangle }\otimes_{\cO_{M}}m$.
Let $\Spec L$ be a connected component of $\Spec M$ and $H\subset G$
its stabilizer. Let $\delta_{L/K}$ be the different exponent of $L/K$
(the different of $L/K$ is $\fm_{L}^{d_{L/K}}$). If $(V^{\left\langle M\right\rangle },E^{\left\langle M\right\rangle })$
is the induced log structure on $V^{\left\langle M\right\rangle }$,
then 
\[
E^{\left\langle M\right\rangle }=-(\sharp H\cdot\bv_{V}(M)+\delta_{L/K})V_{m}^{\left\langle M\right\rangle },
\]
where the term $\sharp H\cdot\bv_{V}(M)$ is a contribution of the
morphism $V^{\left\langle M\right\rangle }\to V\otimes\cO_{M}$ and
the term $\delta_{L/K}$ is a contribution of the morphism $V\otimes\cO_{M}\to V$.
Now we have
\[
E^{|M|}=\frac{1}{\sharp G}\left(\theta_{*}E^{\left\langle M\right\rangle }\right)=-\bv_{V}(M)\cdot V_{k}^{|M|}
\]
with $\theta$ the natural morphism $V^{\left\langle M\right\rangle }\to V^{|M|}$.\end{proof}
\begin{prop}
\label{prop:linear McKay}We have 
\begin{gather*}
\sharp_{\st}^{M}V=\frac{q^{n-\bv_{V}(M)}}{\sharp C_{G}(H)},\,\sharp_{\st}^{G}V=\sum_{M\in\GEt}\frac{q^{n-\bv_{V}(M)}}{\sharp C_{G}(H)},\\
\sharp_{\st}^{M}(V)_{o}=\frac{q^{\bw_{V}(M)}}{\sharp C_{G}(H)}\text{ and }\sharp_{\st}^{G}(V)_{o}=\sum_{M\in\GEt}\frac{q^{\bw_{V}(M)}}{\sharp C_{G}(H)}.
\end{gather*}
\end{prop}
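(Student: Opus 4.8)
The plan is to reduce both pairs of identities to an explicit Haar-measure computation on the smooth variety $V^{|M|}=\AA_{\cO_K,\by}^n$, exploiting that the preceding lemma has already pinned down its log structure. First I would record the geometric input. Since $V^{|M|}=\AA_{\cO_K,\by}^n$ is $\cO_K$-smooth it is normal, so the normalization is superfluous and $V^{|M|,\nu}=V^{|M|}$; moreover $\Omega_{V^{|M|}/\cO_K}^n$ is free with nowhere-vanishing generator $dy_1\wedge\cdots\wedge dy_n$, whence $K_{V^{|M|}}=0$. By the preceding lemma $E^{|M|}=-\bv_V(M)\cdot V_k^{|M|}$, and the prime divisor $V_k^{|M|}$, being the special fiber, is cut out by a uniformizer $\varpi$. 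Choosing $r$ with $r\bv_V(M)\in\ZZ$, the subsheaf $\cO_{V^{|M|}}(r(K_{V^{|M|}}+E^{|M|}))$ of $(\Omega_{V^{|M|}/\cO_K}^n)^{\otimes r}\otimes K(V^{|M|})$ is then generated by $\varpi^{r\bv_V(M)}(dy_1\wedge\cdots\wedge dy_n)^{\otimes r}$, so the density of $\mu_{V^{|M|},E^{|M|}}$ against the standard Haar measure on $\AA_{\cO_K,\by}^n(\cO_K)=\cO_K^n$ is the constant $|\varpi|^{\bv_V(M)}=q^{-\bv_V(M)}$.

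The first pair of formulas drops out immediately: since $\AA_{\cO_K,\by}^n(\cO_K)$ has standard measure $1$,
\[
\sharp_{\st}(V^{|M|},E^{|M|})=q^{n}\cdot q^{-\bv_V(M)}\cdot 1=q^{n-\bv_V(M)},
\]
and dividing by $\sharp C_G(H)$ and summing over $M\in\GEt$ gives the asserted values of $\sharp_{\st}^M V$ and $\sharp_{\st}^G V$.

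For the counts at the origin the extra ingredient is the geometry of $C^{|M|}$, the image in $V_k^{|M|}=\AA_{k,\by}^n$ of the fiber over $o$. Here I would invoke the analysis in the proof of Lemma \ref{lem:fiber dim}: the restriction of $\eta_k$ to the relevant component of the reduced special fiber of $V^{\langle M\rangle}$ is a $k$-linear map, so the fiber over the $G$-fixed point $o$ is its kernel, a linear subspace, and its image $C^{|M|}$ under the base change $\theta_k$ of $\theta\colon V^{\langle M\rangle}\to V^{|M|}$ is again a $k$-linear subspace of $\AA_{k,\by}^n$. Its dimension is insensitive to finite unramified base change, so it may be computed after passing to the geometric situation of that lemma, where it equals $\dim\eta_k^{-1}(o)$; hence $\sharp C^{|M|}(k)=q^{\dim\eta_k^{-1}(o)}$. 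Because the density of $\mu_{V^{|M|},E^{|M|}}$ is the constant $q^{-\bv_V(M)}$, the $\cO_K$-points reducing into $C^{|M|}$ form $\sharp C^{|M|}(k)$ residue classes, each of standard measure $q^{-n}$, so
\[
\sharp_{\st}(V^{|M|},E^{|M|})_{C^{|M|}}=q^{n}\cdot q^{-\bv_V(M)}\cdot\sharp C^{|M|}(k)\cdot q^{-n}=q^{\dim\eta_k^{-1}(o)-\bv_V(M)}=q^{\bw_V(M)}.
\]
Dividing by $\sharp C_G(H)$ and summing over $M$ yields the two remaining formulas.

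The main obstacle is this second step: correctly identifying $C^{|M|}$ as a linear subspace of $\AA_{k,\by}^n$ of dimension $\dim\eta_k^{-1}(o)$, and in particular checking that the passage to a single component and the interplay of the two maps $\eta_k$ and $\theta_k$ do not alter the point count. This is exactly the linear-algebra content isolated in the proof of Lemma \ref{lem:fiber dim}, combined with the compatibility of the whole construction with finite unramified base change noted there; once that is in hand, everything else reduces to the routine measure estimates above.
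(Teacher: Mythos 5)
Your proposal is correct and follows essentially the same route as the paper's own proof: both use the preceding lemma $E^{|M|}=-\bv_{V}(M)\cdot V_{k}^{|M|}$ to see that $\mu_{V^{|M|},E^{|M|}}$ has constant density $q^{-\bv_{V}(M)}$ against the Haar measure on $\cO_{K}^{n}$, then integrate over all of $\cO_{K}^{n}$ for the first pair of formulas and over the tube of the linear subspace $C^{|M|}$ of dimension $\dim\eta_{k}^{-1}(o)$ for the second. Your justification that $C^{|M|}$ is a $k$-linear subspace of that dimension, via the linearity of $\eta_{k}$ on components and unramified base change from the proof of Lemma \ref{lem:fiber dim}, is if anything slightly more detailed than the paper, which simply asserts this fact.
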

\begin{proof}
If we write $\bv_{V}(M)=s/r$ with integers $s\ge0$ and $r>0$, then
$\cO_{V^{|M|}}(r(K_{V^{|M|}}+E^{|M|}))$ has a generator $\varpi^{s}(dy_{1}\wedge\cdots\wedge dy_{n})^{\otimes r}$
with $\varpi$ a uniformizer of $K$. We have 
\begin{align*}
\sharp_{\st}^{M}V & =\frac{\sharp_{\st}(V^{|M|},E^{|M|})}{\sharp C_{G}(H)}\\
 & =\frac{q^{n}}{\sharp C_{G}(H)}\int_{\cO_{K}^{n}}|\varpi^{s}|^{1/r}\\
 & =\frac{q^{n-s/r}}{\sharp C_{G}(H)}\cdot\mu_{K^{n}}(\cO_{K}^{n})\\
 & =\frac{q^{n-s/r}}{\sharp C_{G}(H)}.
\end{align*}
showing the first two equalities of the proposition. If we put $a:=\dim\eta_{k}^{-1}(o)$,
the subset $C^{|M|}\subset V_{k}^{|M|}$ is a linear subspace of dimension
$a$. Therefore
\begin{align*}
\sharp_{\st}^{M}(V)_{o} & =\frac{\sharp_{\st}(V^{|M|},E^{|M|})_{C^{|M|}}}{\sharp C_{G}(H)}\\
 & =\frac{q^{n}}{\sharp C_{G}(H)}\int_{\cO_{K}^{a}\times\fm_{K}^{n-a}}|\varpi^{s}|^{1/r}\\
 & =\frac{q^{n-s/r}}{\sharp C_{G}(H)}\cdot\mu_{K^{n}}(\cO_{K}^{a}\times\fm_{K}^{n-a})\\
 & =\frac{q^{a-s/r}}{\sharp C_{G}(H)}.
\end{align*}
This shows the last two equalities of the proposition.
\end{proof}

\subsection{\label{sub:mass}}

Serre \cite{MR500361} proved a beautiful mass formula: for each integer
$n\ge2$, 
\begin{eqnarray*}
\sum_{\substack{L/K\text{: tot. ram.}\\
{}[L:K]=n
}
}\frac{q^{-d_{L/K}}}{\sharp\Aut(L/K)} & = & q^{1-n},
\end{eqnarray*}
where $L$ runs over the isomorphism classes of totally ramified field
extensions of a fixed local field $K$ with $[L:K]=n$, $d_{L/K}$
is the discriminant exponent of $L/K$ (the discriminant of $L/K$
is $\fm_{K}^{d_{L/K}}$) and $\Aut(L/K)$ the group of $K$-automorphisms
of $L$. Bhargava \cite{MR2354798} proved a similar formula: for
each $n\ge2$, denoting by $\nEt$ the set of isomorphism classes
of étale $K$-algebras of degree $n$, we have 
\[
\sum_{L\in\nEt}\frac{q^{-d_{L/K}}}{\sharp\Aut(L/K)}=\sum_{i=0}^{n-1}P(n,n-i)q^{-i},
\]
where $P(n,n-i)$ is the number of partitions of the integer $n$
into exactly $n-i$ parts.

\subsection{\label{sub:corresp etale}}

Let $S_{n}$ be the $n$-th symmetric group acting on $\{1,\dots,n\}$.
We embed $S_{n-1}$ into $S_{n}$ as the stabilizer subgroup of $1$.
Let $\nEt$ be the set of isomorphism classes of étale $K$-algebras
of degree $n$. We have a bijection 
\[
\SnEt\to\nEt,\,M\mapsto M^{S_{n-1}}.
\]
Moreover the automorphism group of $M$ as an $S_{n}$-étale $K$-algebra
is isomorphic to the automorphism group of the étale $K$-algebra
$M^{S_{n-1}}$. 

Suppose that $S_{n}$ acts on $V=\AA_{\cO_{K}}^{2n}$ by the direct
sum of two copies of the standard permutation representation. Wood
and Yasuda \cite{Wood-Yasuda-I} showed 
\[
d_{M^{S_{n-1}}/K}=\bv_{V}(M).
\]
Therefore the left hand side of Bhargava's formula is written as 
\[
\sum_{M\in\SnEt}\frac{q^{-\bv_{V}(M)}}{\sharp C_{S_{n}}(H)},
\]
where $H\subset S_{n}$ is the stabilizer of a component of $\Spec M$.

The quotient variety $V/S_{n}$ is identical to the $n$-th symmetric
product of $\AA_{\cO_{K}}^{2}$ over $\cO_{K}$. Let $\Hilb^{n}(\AA_{\cO_{K}}^{2})$
be the Hilbert scheme of $n$ points of $\AA_{\cO_{K}}^{2}$ defined
relatively over $\cO_{K}$, which is a smooth $\cO_{K}$-variety.
From \cite[7.4.6]{MR2107324} the Hilbert-Chow morphism $\Hilb^{n}(\AA_{\cO_{K}}^{2})\to V/S_{n}$
is proper, birational and crepant. Therefore 
\[
\sharp_{\st}(V/S_{n})=\sharp_{\st}\Hilb^{n}(\AA_{\cO_{K}}^{2})=\sharp\Hilb^{n}(\AA_{\cO_{K}}^{2})(k).
\]
Using a stratification of $\Hilb^{n}(\AA_{k}^{2})$ into affine spaces
(or Gröbner basis theory), we can count the $k$-points of $\Hilb^{n}(\AA_{k}^{2})$
and get 
\[
\sharp\Hilb^{n}(\AA_{\cO_{K}}^{2})(k)=\sum_{i=0}^{n-1}P(n,n-i)q^{2n-i}.
\]
Proposition \ref{prop:linear McKay} gives
\[
\sum_{M\in\SnEt}\frac{q^{2n-\bv_{V}(M)}}{\sharp C_{S_{n}}(H)}=\sharp_{\st}^{S_{n}}V=\sharp_{\st}(V/S_{n})=\sharp_{\st}\Hilb^{n}(\AA_{\cO_{K}}^{2})=\sum_{i=0}^{n-1}P(n,n-i)q^{2n-i}.
\]
Dividing these by $q^{2n}$, we reprove Bhargava's mass formula as
a consequence of the wild McKay correspondence. This computation will
be revisited in \cite{Wood-Yasuda-II} in relation to dualities discussed
there. Kedlaya \cite{MR2354797} obtained a similar formula for the
group of signed permutation matrices. In \cite{Wood-Yasuda-II}, Kedlaya's
formula is also deduced from the wild McKay correspondence in a very
similar way except the case of residual characteristic two.

We can prove Serre's mass formula from Bhargava's mass formula, for
instance, by using the exponential formula in a direction opposite
to the one in \cite{MR2354797}. Let us write
\[
N(K,n):=\sum_{\substack{L/K\text{: tot. ram.}\\
{}[L:K]=n
}
}\frac{q^{-d_{L/K}}}{\sharp\Aut(L/K)}\text{ and }M(K,n):=\sum_{L\in\nEt}\frac{q^{-d_{L/K}}}{\sharp\Aut(L/K)}.
\]
Kedlaya \cite[pages 7--8]{MR2354797} showed 
\[
\sum_{n=0}^{\infty}M(K,n)x^{n}=\exp\left(\sum_{n=1}^{\infty}\frac{x^{n}}{n}\sum_{f\mid n}\frac{N(K_{f},n/f)}{f}\right),
\]
where $K_{f}$ is the unramified extension of $K$ of degree $f$.
He used Serre's formula for $N(K_{f},n/f)$ to get Bhargava's formula.
It is easy to show that given the values of $M(K_{f},n/f)$ for all
$f$ and $n$, then the equality above determines the values of $N(K_{f},n)$
for all $f$ and $n$. In particular, $M(K,n)$ must be equal to $q^{1-n}$
and Serre's formula holds. 

\bibliographystyle{alpha}
\bibliography{/Users/Takehiko/Dropbox/Math_Articles/mybib}

\end{document}